\numberwithin{equation}{section}
\newtheorem{theorem}{Theorem}[section]
\newtheorem{definition}[theorem]{Definition}
\newtheorem{proposition}[theorem]{Proposition}
\newtheorem{lemma}[theorem]{Lemma}
\newtheorem{corollary}[theorem]{Corollary}
\theoremstyle{definition}
\newtheorem{remark}[theorem]{Remark}
\title{\textbf{Ore and Goldie theorems\\ for skew $PBW$ extensions}}
\author{Oswaldo Lezama\\
\texttt{jolezamas@unal.edu.co}
\\Juan Pablo Acosta, Cristian Chaparro, Ingrid Ojeda, César Venegas\footnote{Students of the \textit{Graduate Program in
Mathematics}.}
\\ Seminario de Álgebra Constructiva - SAC$^2$\\ Departamento de Matemáticas\\ Universidad Nacional de
Colombia, Sede Bogot\'a}
\date{}
\begin{document}
\maketitle
\begin{abstract}
\noindent Many rings and algebras arising in quantum mechanics can be interpreted as skew $PBW$
(Poincaré-Birkhoff-Witt) extensions. Indeed, Weyl algebras, enveloping algebras of
finite-dimensional Lie algebras (and its quantization), Artamonov quantum polynomials, diffusion
algebras, Manin algebra of quantum matrices, among many others, are examples of skew $PBW$
extensions. In this paper we extend the classical Ore and Goldie theorems, known for skew
polynomial rings, to this wide class of non-commutative rings. As application, we prove the quantum
version of the Gelfand-Kirillov conjecture for the skew quantum polynomials.

\bigskip

\noindent \textit{Key words and phrases.} Ore's theorem, Goldie's theorem, skew polynomial rings,
$PBW$ extensions, quantum algebras, skew $PBW$ extensions.

\bigskip

\noindent 2010 \textit{Mathematics Subject Classification.} Primary: 16U20, 16S80. Secondary:
16N60, 16S36.
\end{abstract}
\section{Skew $PBW$ extensions}\label{definitionexamplesspbw}

The classical Ore's theorem says that if $R$ is a left Ore domain and $R[x;\sigma,\delta]$ is the
skew polynomial ring over $R$, with $\sigma$ injective, then $R[x;\sigma,\delta]$ is also a left
Ore domain, and hence has left total division ring of fractions (see \cite{Ore} or also
\cite{Cohn1}). In this paper we generalize this result to skew $PBW$ extensions, a wide class of
non-commutative rings introduced in \cite{LezamaGallego}. Skew $PBW$ extensions include many rings
and algebras arising in quantum mechanics such as the classical $PBW$ extensions (see \cite{Bell}),
Weyl algebras, enveloping algebras of finite-dimensional Lie algebras (and its quantization),
Artamonov quantum polynomials (see \cite{Artamonov}, \cite{Artamonov2}), diffusion algebras, Manin
algebra of quantum matrices, among many others. A very long list of remarkable examples of skew
$PBW$ extensions is presented in \cite{lezamareyes1}, where some ring-theoretic properties have
been investigated for this class of rings, for example, the global, Krull, Goldie and
Gelfand-Kirillov dimensions were estimated. In the present paper we are interested in proving Ore
and Goldie theorems for skew $PBW$ extensions, generalizing this way two well known results.

In this section we recall the definition of skew $PBW$ (Poincaré-Birkhoff-Witt) extensions defined
firstly in \cite{LezamaGallego}, and we will review also some elementary properties about the
polynomial interpretation of this kind of non-commutative rings. Two particular subclasses of these
extensions are recalled also.
\begin{definition}\label{gpbwextension}
Let $R$ and $A$ be rings. We say that $A$ is an \textit{skew $PBW$ extension of $R$} $($also called
a $\sigma-PBW$ extension of $R$$)$ if the following conditions hold:
\begin{enumerate}
\item[\rm (i)]$R\subseteq A$.
\item[\rm (ii)]There exist finite elements $x_1,\dots ,x_n\in A$ such $A$ is a left $R$-free module with basis
\begin{center}
${\rm Mon}(A):= \{x^{\alpha}=x_1^{\alpha_1}\cdots
x_n^{\alpha_n}\mid \alpha=(\alpha_1,\dots ,\alpha_n)\in
\mathbb{N}^n\}$.
\end{center}
In this case it says also that \textit{$A$ is a left polynomial ring over $R$} with respect to
$\{x_1,\dots,x_n\}$ and $Mon(A)$ is the set of standard monomials of $A$. Moreover, $x_1^0\cdots
x_n^0:=1\in Mon(A)$.
\item[\rm (iii)]For every $1\leq i\leq n$ and $r\in R-\{0\}$ there exists $c_{i,r}\in R-\{0\}$ such that
\begin{equation}\label{sigmadefinicion1}
x_ir-c_{i,r}x_i\in R.
\end{equation}
\item[\rm (iv)]For every $1\leq i,j\leq n$ there exists $c_{i,j}\in R-\{0\}$ such that
\begin{equation}\label{sigmadefinicion2}
x_jx_i-c_{i,j}x_ix_j\in R+Rx_1+\cdots +Rx_n.
\end{equation}
Under these conditions we will write $A:=\sigma(R)\langle
x_1,\dots ,x_n\rangle$.
\end{enumerate}
\end{definition}
The following proposition justifies the notation and the
alternative name given for the skew $PBW$ extensions.
\begin{proposition}\label{sigmadefinition}
Let $A$ be an skew $PBW$ extension of $R$. Then, for every $1\leq i\leq n$, there exists an
injective ring endomorphism $\sigma_i:R\rightarrow R$ and a $\sigma_i$-derivation
$\delta_i:R\rightarrow R$ such that
\begin{center}
$x_ir=\sigma_i(r)x_i+\delta_i(r)$,
\end{center}
for each $r\in R$.
\end{proposition}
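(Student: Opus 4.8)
The plan is to read off $\sigma_i$ and $\delta_i$ directly from condition (iii) of Definition~\ref{gpbwextension} and to promote the algebraic identities they satisfy by exploiting the freeness in condition (ii). First I would fix $i$ and, for each $r\in R-\{0\}$, use (iii) to select $c_{i,r}\in R-\{0\}$ with $x_ir-c_{i,r}x_i\in R$, and then set $\sigma_i(r):=c_{i,r}$ and $\delta_i(r):=x_ir-c_{i,r}x_i$ for $r\neq 0$, together with $\sigma_i(0):=0$ and $\delta_i(0):=0$. The key preliminary observation is that these are well defined: since ${\rm Mon}(A)$ is a basis of $A$ as a left $R$-module by (ii), every element of $A$ has a \emph{unique} expression as a left $R$-linear combination of standard monomials; writing $x_ir=c_{i,r}x_i+d$ with $d\in R=R\cdot 1$, the element $c_{i,r}$ is exactly the coefficient of the basis monomial $x_i$ and $d=\delta_i(r)$ is the coefficient of the basis monomial $1=x_1^0\cdots x_n^0$, so both are uniquely determined (independently of the choice allowed by (iii)). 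By construction the defining relation
\[
x_ir=\sigma_i(r)x_i+\delta_i(r)
\]
then holds for every $r\in R$.

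Next I would verify the ring-theoretic properties, and here the uniqueness of representations does all the work. For additivity I would expand $x_i(r+s)$ in two ways and compare the coefficients of $x_i$ and of $1$, obtaining $\sigma_i(r+s)=\sigma_i(r)+\sigma_i(s)$ and $\delta_i(r+s)=\delta_i(r)+\delta_i(s)$. For the multiplicative structure I would compute $x_i(rs)=(x_ir)s=\big(\sigma_i(r)x_i+\delta_i(r)\big)s=\sigma_i(r)\sigma_i(s)x_i+\sigma_i(r)\delta_i(s)+\delta_i(r)s$; comparing with $x_i(rs)=\sigma_i(rs)x_i+\delta_i(rs)$ and again invoking uniqueness yields $\sigma_i(rs)=\sigma_i(r)\sigma_i(s)$ together with $\delta_i(rs)=\sigma_i(r)\delta_i(s)+\delta_i(r)s$, the latter being precisely the $\sigma_i$-derivation identity. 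Finally, applying the relation to $r=1$ forces $\sigma_i(1)x_i+\delta_i(1)=x_i$, whence $\sigma_i(1)=1$ and $\delta_i(1)=0$, so $\sigma_i$ is a unital ring endomorphism.

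It remains to establish injectivity of $\sigma_i$, and this is where condition (iii) is used in its sharpest form: the hypothesis guarantees $c_{i,r}\in R-\{0\}$ whenever $r\neq 0$, so $\sigma_i(r)=0$ implies $r=0$, i.e. $\ker\sigma_i=\{0\}$. I expect the only genuinely delicate point to be the well-definedness step, since everything downstream rests on the uniqueness of the coefficients of $x_i$ and of $1$ in the standard-monomial expansion; once that is pinned down from (ii), the verifications of additivity, multiplicativity, and the Leibniz rule reduce to routine coefficient comparisons.
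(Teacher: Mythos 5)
Your proof is correct and complete: defining $\sigma_i(r)$ and $\delta_i(r)$ as the coefficients of $x_i$ and of $1$ in the unique standard-monomial expansion of $x_ir$, and then deriving additivity, multiplicativity, the $\sigma_i$-Leibniz rule, and injectivity by coefficient comparison using the left $R$-freeness from (ii) together with $c_{i,r}\neq 0$ from (iii), is exactly the standard argument. The paper itself gives no proof here, deferring to Proposition 3 of the cited reference of Lezama and Gallego, and your argument is the expected content of that reference.
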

\begin{proof}
See \cite{LezamaGallego}, Proposition 3.
\end{proof}

A particular case of skew $PBW$ extension is when all derivations $\delta_i$ are zero. Another
interesting case is when all $\sigma_i$ are bijective and the constants $c_{ij}$ are invertible. We
recall the following definition (cf. \cite{LezamaGallego}).
\begin{definition}\label{sigmapbwderivationtype}
Let $A$ be an skew $PBW$ extension.
\begin{enumerate}
\item[\rm (a)]
$A$ is quasi-commutative if the conditions {\rm(}iii{\rm)} and
{\rm(}iv{\rm)} in Definition \ref{gpbwextension} are replaced by
\begin{enumerate}
\item[\rm (iii')]For every $1\leq i\leq n$ and $r\in R-\{0\}$ there exists $c_{i,r}\in R-\{0\}$ such that
\begin{equation}
x_ir=c_{i,r}x_i.
\end{equation}
\item[\rm (iv')]For every $1\leq i,j\leq n$ there exists $c_{i,j}\in R-\{0\}$ such that
\begin{equation}
x_jx_i=c_{i,j}x_ix_j.
\end{equation}
\end{enumerate}
\item[\rm (b)]$A$ is bijective if $\sigma_i$ is bijective for
every $1\leq i\leq n$ and $c_{i,j}$ is invertible for any $1\leq
i<j\leq n$.
\end{enumerate}
\end{definition}

Some extra notation will be used in the paper.

\begin{definition}\label{1.1.6}
Let $A$ be an skew $PBW$ extension of $R$ with endomorphisms $\sigma_i$, $1\leq i\leq n$, as in
Proposition \ref{sigmadefinition}.
\begin{enumerate}
\item[\rm (i)]For $\alpha=(\alpha_1,\dots,\alpha_n)\in \mathbb{N}^n$,
$\sigma^{\alpha}:=\sigma_1^{\alpha_1}\cdots \sigma_n^{\alpha_n}$,
$|\alpha|:=\alpha_1+\cdots+\alpha_n$. If
$\beta=(\beta_1,\dots,\beta_n)\in \mathbb{N}^n$, then
$\alpha+\beta:=(\alpha_1+\beta_1,\dots,\alpha_n+\beta_n)$.
\item[\rm (ii)]For $X=x^{\alpha}\in {\rm Mon}(A)$,
$\exp(X):=\alpha$ and $\deg(X):=|\alpha|$.
\item[\rm (iii)]If $f=c_1X_1+\cdots +c_tX_t$,
with $X_i\in Mon(A)$ and $c_i\in R-\{0\}$, then
$\deg(f):=\max\{\deg(X_i)\}_{i=1}^t.$
\end{enumerate}
\end{definition}
The skew $PBW$ extensions can be characterized in a similar way as
was done in \cite{Gomez-Torrecillas} for $PBW$ rings.
\begin{theorem}\label{coefficientes}
Let $A$ be a left polynomial ring over $R$ w.r.t. $\{x_1,\dots,x_n\}$. $A$ is an skew $PBW$
extension of $R$ if and only if the following conditions hold:
\begin{enumerate}
\item[\rm (a)]For every $x^{\alpha}\in {\rm Mon}(A)$ and every $0\neq
r\in R$ there exist unique elements
$r_{\alpha}:=\sigma^{\alpha}(r)\in R-\{0\}$ and $p_{\alpha ,r}\in
A$ such that
\begin{equation}\label{611}
x^{\alpha}r=r_{\alpha}x^{\alpha}+p_{\alpha , r},
\end{equation}
where $p_{\alpha ,r}=0$ or $\deg(p_{\alpha ,r})<|\alpha|$ if
$p_{\alpha , r}\neq 0$. Moreover, if $r$ is left invertible, then
$r_\alpha$ is left invertible.

\item[\rm (b)]For every $x^{\alpha},x^{\beta}\in {\rm Mon}(A)$ there
exist unique elements $c_{\alpha,\beta}\in R$ and
$p_{\alpha,\beta}\in A$ such that
\begin{equation}\label{612}
x^{\alpha}x^{\beta}=c_{\alpha,\beta}x^{\alpha+\beta}+p_{\alpha,\beta},
\end{equation}
where $c_{\alpha,\beta}$ is left invertible, $p_{\alpha,\beta}=0$
or $\deg(p_{\alpha,\beta})<|\alpha+\beta|$ if
$p_{\alpha,\beta}\neq 0$.
\end{enumerate}
\begin{proof}
See \cite{LezamaGallego}, Theorem 7.
\end{proof}
\end{theorem}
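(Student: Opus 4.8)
The statement is an equivalence, and I would prove the two implications separately, observing that the reverse implication is a direct specialization of (a) and (b) while the forward implication carries the real content. For ($\Leftarrow$), assume (a) and (b); since $A$ is given as a left polynomial ring over $R$, conditions (i) and (ii) of Definition \ref{gpbwextension} already hold, so only (iii) and (iv) need checking. Taking $\alpha=e_i$ in (a) (with $e_i$ the $i$-th canonical vector of $\mathbb{N}^n$) gives $x_ir=\sigma^{e_i}(r)x_i+p_{e_i,r}$ with $p_{e_i,r}\in R$ because $\deg(p_{e_i,r})<1$, which is exactly (iii) with $c_{i,r}:=\sigma_i(r)\in R-\{0\}$. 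Taking $\alpha=e_j,\beta=e_i$ with $i<j$ in (b) gives $x_jx_i=c_{e_j,e_i}x^{e_i+e_j}+p_{e_j,e_i}=c_{e_j,e_i}x_ix_j+p_{e_j,e_i}$ with $p_{e_j,e_i}\in R+Rx_1+\cdots+Rx_n$, which is (iv) with $c_{i,j}:=c_{e_j,e_i}$ (nonzero because it is left invertible); the cases $i\ge j$ are handled similarly, using the left invertibility asserted in (b). Hence $A$ is a skew $PBW$ extension.

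For ($\Rightarrow$), assume $A$ is a skew $PBW$ extension and use Proposition \ref{sigmadefinition} to fix the injective endomorphisms $\sigma_i$ and the $\sigma_i$-derivations $\delta_i$ with $x_ir=\sigma_i(r)x_i+\delta_i(r)$. The idea is that (iii) and (iv) furnish a length-reducing rewriting system: relation (iii) moves a scalar to the left of an $x_i$ at the cost of a strictly lower-degree term, and relation (iv) sorts an out-of-order pair $x_jx_i$ (with $i<j$) into $c_{i,j}x_ix_j$ plus terms of degree $\le 1$. I would establish (a) and (b) together by a simultaneous induction (conceptually, a Bergman diamond-lemma argument). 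For (b) one sorts the concatenated word $x^\alpha x^\beta$ into increasing order, the leading term being $c_{\alpha,\beta}x^{\alpha+\beta}$, where $c_{\alpha,\beta}$ is the product of the structure constants $c_{i,j}$ collected along the sorting, each twisted by the composite of the $\sigma_k$ through which it is carried to the front. For (a) one argues by induction on $|\alpha|$, writing $x^\alpha=x_ix^{\alpha-e_i}$ for the least $i$ with $\alpha_i\ge 1$, applying the inductive formula to $x^{\alpha-e_i}r$, moving $x_i$ across the resulting leading scalar via $x_is=\sigma_i(s)x_i+\delta_i(s)$, and using $\sigma_i\sigma^{\alpha-e_i}=\sigma^\alpha$. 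The coefficient $\sigma^\alpha(r)$ is nonzero by injectivity of the $\sigma_i$, and it is left invertible whenever $r$ is, since a ring endomorphism carries a left inverse of $r$ to a left inverse of $\sigma_i(r)$. Uniqueness of $r_\alpha,p_{\alpha,r},c_{\alpha,\beta},p_{\alpha,\beta}$ is immediate from the fact that ${\rm Mon}(A)$ is a free $R$-basis of $A$.

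The delicate point is the unconditional left invertibility of $c_{\alpha,\beta}$ in (b), since Definition \ref{gpbwextension}(iv) only provides $c_{i,j}\in R-\{0\}$. I would obtain it from freeness applied directly to the defining relations: for $i<j$, relation (iv) for the ordered pairs $(i,j)$ and $(j,i)$ reads $x_jx_i=c_{i,j}x_ix_j+L_1$ and $x_ix_j=c_{j,i}x_jx_i+L_2$ with $L_1,L_2\in R+Rx_1+\cdots+Rx_n$; substituting the first into the second and comparing the coefficient of the basis monomial $x^{e_i+e_j}$, which is $R$-independent from every monomial of degree $\le 1$, forces $c_{j,i}c_{i,j}=1$. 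Thus every $c_{i,j}$ with $i<j$ is left invertible, and these are exactly the constants entering the sorting; since left invertibility is preserved under products and under the endomorphisms $\sigma_k$, the accumulated coefficient $c_{\alpha,\beta}$ is left invertible. The main obstacle is not any single step but the bookkeeping that makes the simultaneous induction close: one must check that every application of (iii) or (iv), including those triggered inside the lower-order remainders, strictly decreases the relevant degree, so that the rewriting terminates and the leading data are well defined. Organizing this by a suitable well-ordering, for instance lexicographic in total degree and number of inversions, is the technical heart of the argument.
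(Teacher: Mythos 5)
The paper offers no proof of this theorem beyond the citation ``See \cite{LezamaGallego}, Theorem 7,'' so there is nothing internal to compare against; your reconstruction is essentially the standard argument behind that reference, and it is correct in outline. In particular you handle the two points that genuinely require care --- deriving the unconditional left invertibility of $c_{\alpha,\beta}$ from the identity $c_{j,i}c_{i,j}=1$, obtained by substituting the two relations of Definition \ref{gpbwextension}(iv) into each other and comparing coefficients in the free basis ${\rm Mon}(A)$, and noting that left invertibility is preserved under products and under the endomorphisms $\sigma_k$ --- while the remaining work (the induction on $|\alpha|$ using $\sigma_i\sigma^{\alpha-e_i}=\sigma^{\alpha}$, and uniqueness from freeness) is routine as you indicate.
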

We remember also the following facts from \cite{LezamaGallego}.
\begin{remark}\label{identities}
(i) We observe that if $A$ is quasi-commutative, then $p_{\alpha,r}=0$ and $p_{\alpha,\beta}=0$ for
every $0\neq r\in R$ and every $\alpha,\beta \in \mathbb{N}^n$.

(ii) If $A$ is bijective, then $c_{\alpha,\beta}$ is invertible for any $\alpha,\beta\in
\mathbb{N}^n$.

(iii) In $Mon(A)$ we define
\begin{center}
$x^{\alpha}\succeq x^{\beta}\Longleftrightarrow
\begin{cases}
x^{\alpha}=x^{\beta}\\
\text{or} & \\
x^{\alpha}\neq x^{\beta}\, \text{but} \, |\alpha|> |\beta| & \\
\text{or} & \\
x^{\alpha}\neq x^{\beta},|\alpha|=|\beta|\, \text{but $\exists$
$i$ with} &
\alpha_1=\beta_1,\dots,\alpha_{i-1}=\beta_{i-1},\alpha_i>\beta_i.
\end{cases}$
\end{center}
It is clear that this is a total order on $Mon(A)$. If $x^{\alpha}\succeq x^{\beta}$ but
$x^{\alpha}\neq x^{\beta}$, we write $x^{\alpha}\succ x^{\beta}$. Each element $f\in A$ can be
represented in a unique way as $f=c_1x^{\alpha_1}+\cdots +c_tx^{\alpha_t}$, with $c_i\in R-\{0\}$,
$1\leq i\leq t$, and $x^{\alpha_1}\succ \cdots \succ x^{\alpha_t}$. We say that $x^{\alpha_1}$ is
the \textit{leader monomial} of $f$ and we write $lm(f):=x^{\alpha_1}$ ; $c_1$ is the
\textit{leader coefficient} of $f$, $lc(f):=c_1$, and $c_1x^{\alpha_1}$ is the \textit{leader term}
of $f$ denoted by $lt(f):=c_1x^{\alpha_1}$.
\end{remark}
A natural and useful result that we will use later is the
following property.
\begin{proposition}\label{1.1.10a}
Let A be an skew PBW extension of a ring R. If R is a domain, then A is a domain.
\end{proposition}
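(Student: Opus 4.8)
The plan is to show that $A$ has no zero divisors by tracking leader terms under multiplication. First, since $R$ is a domain it is nonzero, and as $R\subseteq A$ we get $1\neq 0$ in $A$, so $A$ itself is nonzero. Given nonzero $f,g\in A$, I would write them in the normal form of Remark \ref{identities}(iii): $f=c_1x^{\alpha_1}+\cdots+c_sx^{\alpha_s}$ and $g=d_1x^{\beta_1}+\cdots+d_tx^{\beta_t}$ with $x^{\alpha_1}\succ\cdots\succ x^{\alpha_s}$, $x^{\beta_1}\succ\cdots\succ x^{\beta_t}$, all $c_i,d_j\in R-\{0\}$, so that $lt(f)=c_1x^{\alpha_1}$ and $lt(g)=d_1x^{\beta_1}$. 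The target is to prove that in $fg$ the monomial $x^{\alpha_1+\beta_1}$ appears with coefficient $c_1\,\sigma^{\alpha_1}(d_1)\,c_{\alpha_1,\beta_1}$ and that this coefficient is nonzero; this alone yields $fg\neq 0$.

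Next I would expand each summand $c_ix^{\alpha_i}\cdot d_jx^{\beta_j}$ using Theorem \ref{coefficientes}. Formula (\ref{611}) gives $x^{\alpha_i}d_j=\sigma^{\alpha_i}(d_j)x^{\alpha_i}+p_{\alpha_i,d_j}$ with $\sigma^{\alpha_i}(d_j)\neq 0$ and $\deg p_{\alpha_i,d_j}<|\alpha_i|$, while formula (\ref{612}) gives $x^{\alpha_i}x^{\beta_j}=c_{\alpha_i,\beta_j}x^{\alpha_i+\beta_j}+p_{\alpha_i,\beta_j}$ with $\deg p_{\alpha_i,\beta_j}<|\alpha_i+\beta_j|$. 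Using that $\deg$ is subadditive on products (an immediate consequence of these two formulas), the whole product $c_ix^{\alpha_i}d_jx^{\beta_j}$ has degree at most $|\alpha_i|+|\beta_j|$, and its unique term of degree $|\alpha_i|+|\beta_j|$ is $c_i\,\sigma^{\alpha_i}(d_j)\,c_{\alpha_i,\beta_j}\,x^{\alpha_i+\beta_j}$. Since $\succeq$ ranks by total degree first, $|\alpha_1|=\deg(f)$ and $|\beta_1|=\deg(g)$ are maximal among the $|\alpha_i|$ and the $|\beta_j|$; hence the only pairs $(i,j)$ that can contribute in degree $|\alpha_1+\beta_1|$ are those with $|\alpha_i|=|\alpha_1|$ and $|\beta_j|=|\beta_1|$.

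The crux, and the step I expect to be the main obstacle, is to rule out cancellation on the top monomial $x^{\alpha_1+\beta_1}$. For this I would observe that, on exponents of a fixed total degree, $\succeq$ restricts to the lexicographic order, which is compatible with addition in $\mathbb{N}^n$: $\gamma\succeq\gamma'$ implies $\gamma+\eta\succeq\gamma'+\eta$ whenever the relevant sums share a common degree. From $\alpha_1\succeq\alpha_i$ and $\beta_1\succeq\beta_j$ (for the surviving pairs, all of equal degrees) one then obtains $\alpha_1+\beta_1\succeq\alpha_i+\beta_1\succeq\alpha_i+\beta_j$. If in addition $\alpha_i+\beta_j=\alpha_1+\beta_1$ as vectors, antisymmetry of the total order collapses the chain to equalities, forcing $\alpha_i=\alpha_1$ and $\beta_j=\beta_1$, that is $i=j=1$. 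Thus $(1,1)$ is the unique contributor to $x^{\alpha_1+\beta_1}$, and its coefficient is exactly $c_1\,\sigma^{\alpha_1}(d_1)\,c_{\alpha_1,\beta_1}$.

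To finish, I would check this coefficient is nonzero. The map $\sigma^{\alpha_1}$ is injective, being a composite of the injective endomorphisms $\sigma_i$ of Proposition \ref{sigmadefinition}, so $\sigma^{\alpha_1}(d_1)\neq 0$ because $d_1\neq 0$; the element $c_{\alpha_1,\beta_1}$ is left invertible by Theorem \ref{coefficientes}(b), hence nonzero; and $c_1\neq 0$. As $R$ is a domain, the product $c_1\,\sigma^{\alpha_1}(d_1)\,c_{\alpha_1,\beta_1}$ is nonzero, so $x^{\alpha_1+\beta_1}$ survives in $fg$ and $fg\neq 0$. This shows $A$ has no zero divisors and, being nonzero, is a domain.
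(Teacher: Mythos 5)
Your proof is correct. The paper itself does not prove Proposition \ref{1.1.10a} but simply defers to \cite{lezamareyes1}; your leading-term argument --- isolating the coefficient $c_1\,\sigma^{\alpha_1}(d_1)\,c_{\alpha_1,\beta_1}$ of $x^{\alpha_1+\beta_1}$ via Theorem \ref{coefficientes} and the compatibility of the degree-lexicographic order with addition on exponents of equal total degree --- is exactly the standard proof for this fact.
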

\begin{proof}
See \cite{lezamareyes1}.
\end{proof}

The next theorem characterizes the quasi-commutative skew $PBW$ extensions.

\begin{theorem}\label{1.3.3}
Let $A$ be a quasi-commutative skew $PBW$ extension of a ring $R$. Then,
\begin{enumerate}
\item[\rm (i)] $A$ is isomorphic to an iterated skew polynomial ring of
endomorphism type, i.e.,
\begin{center}
$A\cong R[z_1;\theta_1]\cdots [z_{n};\theta_n]$.
\end{center}
\item[\rm (ii)] If $A$ is bijective, then each
endomorphism $\theta_i$ is bijective, $1\leq i\leq n$.
\end{enumerate}
\end{theorem}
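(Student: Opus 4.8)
The plan is to induct on the number $n$ of variables, splitting off $x_n$ at each stage. Before starting I would record what quasi-commutativity buys us: comparing $x_ir=\sigma_i(r)x_i+\delta_i(r)$ with (iii') forces $\delta_i=0$ and $\sigma_i(r)=c_{i,r}$, and Remark \ref{identities}(i) gives $p_{\alpha,r}=0=p_{\alpha,\beta}$, so inside $A$ we have the clean rules $x^{\alpha}r=\sigma^{\alpha}(r)x^{\alpha}$ and $x^{\alpha}x^{\beta}=c_{\alpha,\beta}x^{\alpha+\beta}$. In particular, writing $e_i\in\mathbb{N}^n$ for the $i$-th canonical vector, $x^{\alpha}x_n=x^{\alpha+e_n}$ is again a standard monomial, so right multiplication by $x_n$ simply shifts the basis and is injective on $A$.

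The base case $n=1$ is immediate: $x_1r=\sigma_1(r)x_1$ together with the free basis $\{x_1^{k}\}$ identifies $A$ with $R[z_1;\sigma_1]$ via $x_1\mapsto z_1$, so $\theta_1=\sigma_1$. For the inductive step I would first check that $A':=\sigma(R)\langle x_1,\dots,x_{n-1}\rangle$, the subring generated by $R$ and $x_1,\dots,x_{n-1}$, is itself a quasi-commutative skew $PBW$ extension of $R$: relations (iii') and (iv') with indices $<n$ keep all products inside the free $R$-submodule spanned by $\{x^{\alpha}:\alpha_n=0\}$. By the induction hypothesis $A'\cong R[z_1;\theta_1]\cdots[z_{n-1};\theta_{n-1}]$.

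The heart of the argument is to build the endomorphism $\theta_n$ of $A'$. I would define $\theta_n(f)$ to be the unique element of $A'$ satisfying $x_nf=\theta_n(f)\,x_n$ in $A$; this is where quasi-commutativity is indispensable. For a monomial with $\alpha_n=0$ one computes $x_nx^{\alpha}=c_{e_n,\alpha}x^{e_n+\alpha}=(c_{e_n,\alpha}x^{\alpha})x_n$ with $c_{e_n,\alpha}x^{\alpha}\in A'$ and no lower-order remainder (because $p_{e_n,\alpha}=0$), while $x_nr=\sigma_n(r)x_n$; extending, $\theta_n(\sum r_{\alpha}x^{\alpha})=\sum\sigma_n(r_{\alpha})c_{e_n,\alpha}x^{\alpha}\in A'$, with $\theta_n|_R=\sigma_n$ and $\theta_n(x_i)=c_{i,n}x_i$. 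Uniqueness is clear since right multiplication by $x_n$ is injective. That $\theta_n$ is a ring endomorphism then drops out formally: it is additive, fixes $1$, and from $x_n(fg)=\theta_n(f)x_ng=\theta_n(f)\theta_n(g)x_n$ together with uniqueness one gets $\theta_n(fg)=\theta_n(f)\theta_n(g)$ — no separate verification of compatibility between the $\sigma_i$'s and the constants $c_{i,j}$ is needed, as it is already encoded in the associativity of $A$. I expect this to be the main obstacle: the whole point is that $x_nf$ stays in $A'x_n$ and does not spill into lower-degree terms or produce a genuine derivation, which is exactly what fails once the extension is not quasi-commutative.

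Finally, identifying $A'$ with $R[z_1;\theta_1]\cdots[z_{n-1};\theta_{n-1}]$, I would form $B:=A'[z_n;\theta_n]$ and let $\phi:B\to A$ be the homomorphism equal to the inclusion on $A'$ with $\phi(z_n)=x_n$. It is well defined because the defining relation $z_nf=\theta_n(f)z_n$ of $B$ maps to $x_nf=\theta_n(f)x_n$, which holds by construction. To see $\phi$ is bijective, observe that $B=\bigoplus_{k\ge0}A'z_n^{k}$ is free over $A'$ on $\{z_n^{k}\}$, while $A=\bigoplus_{k\ge0}A'x_n^{k}$ is free over $A'$ on $\{x_n^{k}\}$ — indeed $A'x_n^{k}$ is the $R$-span of $\{x^{\alpha}:\alpha_n=k\}$ since $x^{\beta}x_n^{k}=x^{\beta+ke_n}$ for $\beta_n=0$. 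Thus $\phi$ matches the two gradings and restricts to the inductive isomorphism on each piece, proving (i). For (ii), if $A$ is bijective then so is $A'$, $\sigma_n$ is bijective and each $c_{i,n}$ is invertible; $\theta_n$ is injective as before, and given $g=\sum r_{\alpha}x^{\alpha}\in A'$ one solves $\sigma_n(s_{\alpha})c_{e_n,\alpha}=r_{\alpha}$ for $s_{\alpha}$ (using invertibility of $c_{e_n,\alpha}$ and surjectivity of $\sigma_n$), so that $f=\sum s_{\alpha}x^{\alpha}$ satisfies $\theta_n(f)=g$; hence $\theta_n$ is bijective and, inductively, every $\theta_i$ is bijective.
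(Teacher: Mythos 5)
The paper gives no proof of this theorem in the text --- it only cites \cite{lezamareyes1} --- so there is nothing here to compare against line by line; judged on its own, your argument is correct and self-contained. The two observations that carry the whole proof are exactly the right ones: quasi-commutativity kills the $\delta_i$ and the lower-order terms $p_{\alpha,r}$, $p_{\alpha,\beta}$, so that $x_nf=\theta_n(f)x_n$ holds \emph{exactly} with $\theta_n(f)\in A'$ (no derivation appears), and right multiplication by $x_n$ sends the basis monomial $x^{\alpha}$ to $x^{\alpha+e_n}$, which gives both the uniqueness of $\theta_n(f)$ and the decomposition $A=\bigoplus_k A'x_n^k$ matching $A'[z_n;\theta_n]=\bigoplus_k A'z_n^k$. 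One cosmetic point in part (ii): the phrase ``$\theta_n$ is injective as before'' conflates uniqueness of $\theta_n(f)$ with injectivity of $\theta_n$, which are different statements; but injectivity is immediate from your explicit formula $\theta_n(\sum r_\alpha x^{\alpha})=\sum \sigma_n(r_\alpha)c_{e_n,\alpha}x^{\alpha}$ once $\sigma_n$ is injective and the $c_{e_n,\alpha}$ are invertible (Remark \ref{identities}(ii)) --- indeed the same formula you use for surjectivity exhibits a two-sided inverse, so bijectivity of $\theta_n$ follows in one stroke.
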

\begin{proof}
 See \cite{lezamareyes1}.
\end{proof}

\begin{theorem}\label{1.3.2}
Let $A$ be an arbitrary skew $PBW$ extension of $R$. Then, $A$ is a filtered ring with filtration
given by
\begin{equation}\label{eq1.3.1a}
F_m:=\begin{cases} R & {\rm if}\ \ m=0\\ \{f\in A\mid {\rm deg}(f)\le m\} & {\rm if}\ \ m\ge 1
\end{cases}
\end{equation}
and the corresponding graded ring $Gr(A)$ is a quasi-commutative skew $PBW$ extension of $R$.
Moreover, if $A$ is bijective, then $Gr(A)$ is a quasi-commutative bijective skew $PBW$ extension
of $R$.
\end{theorem}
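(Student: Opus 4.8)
The plan is to verify the three axioms of a filtered ring and then compute the associated graded ring explicitly from the multiplication rules in Theorem~\ref{coefficientes}. First I would check that $\{F_m\}_{m\ge 0}$ is ascending, exhaustive, and contains $1$ in $F_0$; the only non-trivial point is multiplicativity, $F_mF_{m'}\subseteq F_{m+m'}$. To establish it, take $f\in F_m$, $g\in F_{m'}$ and expand $f=\sum_i a_ix^{\alpha_i}$, $g=\sum_j b_jx^{\beta_j}$ with $|\alpha_i|\le m$, $|\beta_j|\le m'$. Using (\ref{611}) I would rewrite $x^{\alpha_i}b_j=\sigma^{\alpha_i}(b_j)x^{\alpha_i}+p_{\alpha_i,b_j}$, so each summand of $fg$ becomes $a_i\sigma^{\alpha_i}(b_j)x^{\alpha_i}x^{\beta_j}+a_ip_{\alpha_i,b_j}x^{\beta_j}$; by (\ref{612}) the first term has degree $|\alpha_i|+|\beta_j|\le m+m'$, while $\deg(p_{\alpha_i,b_j})<|\alpha_i|$ forces the second to have degree $<|\alpha_i|+|\beta_j|\le m+m'$. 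Hence $\deg(fg)\le m+m'$, proving $A$ is filtered.

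Next I would identify the graded pieces. Since $F_m$ is left $R$-free on $\{x^\alpha:|\alpha|\le m\}$ and $F_{m-1}$ on $\{x^\alpha:|\alpha|\le m-1\}$ (with $F_{-1}:=0$), the quotient $Gr(A)_m=F_m/F_{m-1}$ is left $R$-free on the images $\overline{x^\alpha}$ with $|\alpha|=m$, each nonzero because $\deg(x^\alpha)=|\alpha|$ exactly. Writing $\overline{x}_i$ for the class of $x_i$ in $Gr(A)_1$ and using that in an associated graded ring the product of $\overline{a}\in F_p/F_{p-1}$ and $\overline{b}\in F_q/F_{q-1}$ is $\overline{ab}\in F_{p+q}/F_{p+q-1}$, iteration gives $\overline{x}_1^{\alpha_1}\cdots\overline{x}_n^{\alpha_n}=\overline{x^\alpha}$. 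Thus $Gr(A)$ is left $R$-free with basis $\{\overline{x}^\alpha:\alpha\in\mathbb{N}^n\}$, and together with $R=Gr(A)_0\subseteq Gr(A)$ this yields conditions (i) and (ii) of Definition~\ref{gpbwextension} for $Gr(A)$ relative to $\{\overline{x}_1,\dots,\overline{x}_n\}$.

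It remains to read off the commutation relations, which will be quasi-commutative precisely because the lower-degree corrections die in the associated graded. For condition (iii$'$) of Definition~\ref{sigmapbwderivationtype}, Proposition~\ref{sigmadefinition} gives $x_ir=\sigma_i(r)x_i+\delta_i(r)$ with $\delta_i(r)\in F_0$; passing to $Gr(A)_1$ kills $\delta_i(r)$ and yields $\overline{x}_ir=\sigma_i(r)\overline{x}_i$, so $c_{i,r}=\sigma_i(r)\ne 0$. For condition (iv$'$), condition (iv) of Definition~\ref{gpbwextension} combined with (\ref{612}) gives $x_jx_i=c_{i,j}x_ix_j+p$ with $p\in F_1$, where matching the degree-two terms forces $c_{i,j}=c_{e_j,e_i}\ne 0$ by uniqueness; passing to $Gr(A)_2$ kills $p$ and yields $\overline{x}_j\overline{x}_i=c_{i,j}\overline{x}_i\overline{x}_j$. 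Hence $Gr(A)$ is a quasi-commutative skew $PBW$ extension of $R$.

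Finally, the bijective case is immediate once I observe that the endomorphisms and structure constants of $Gr(A)$ are literally those of $A$: the computations above show the $\sigma_i$ governing (iii$'$) and the $c_{i,j}$ governing (iv$'$) in $Gr(A)$ coincide with those of $A$. Therefore, if every $\sigma_i$ is bijective and every $c_{i,j}$ ($i<j$) is invertible, the same holds for $Gr(A)$, so $Gr(A)$ is quasi-commutative bijective. I expect the main obstacle to be the degree bookkeeping in the multiplicativity step and the careful check that $\overline{x}^\alpha$ is the class of $x^\alpha$; once these are in place, the relations follow formally by discarding lower-order terms.
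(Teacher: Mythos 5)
Your proof is correct, and since the paper itself gives no argument for Theorem~\ref{1.3.2} (it simply defers to \cite{lezamareyes1}), there is no in-paper proof to measure you against; what you have written is the natural and standard argument, and it is essentially complete. The identification of the graded pieces, the computation $\overline{x}_1^{\alpha_1}\cdots\overline{x}_n^{\alpha_n}=\overline{x^\alpha}$, the derivation of (iii$'$) and (iv$'$) by killing the lower-order terms $\delta_i(r)$ and $p$, and the observation that the structure constants of $Gr(A)$ are literally those of $A$ (which settles the bijective case) are all sound. The one place where your write-up should be reorganized rather than repaired is the multiplicativity step: to bound $\deg\bigl(a_i p_{\alpha_i,b_j}x^{\beta_j}\bigr)$ you invoke $\deg(p_{\alpha_i,b_j})<|\alpha_i|$ together with an implicit use of $\deg(ph)\le\deg(p)+\deg(h)$, which is exactly the statement being proved. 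This is not a genuine gap --- it is resolved by running the whole argument as a strong induction on $\deg(f)$ (or on $|\alpha_i|$), so that the bound may be assumed for the strictly lower-degree factor $p_{\alpha_i,b_j}$ --- but as written the step is circular and should be phrased explicitly as an induction.
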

\begin{proof}
See \cite{lezamareyes1}.
\end{proof}

\begin{theorem}[Hilbert Basis Theorem]\label{1.3.4}
Let $A$ be a bijective skew $PBW$ extension of $R$. If $R$ is a left $($right$)$ Noetherian ring
then $A$ is also a left $($right$)$ Noetherian ring.
\end{theorem}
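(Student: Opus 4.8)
The plan is to reduce the statement to two well-understood results — the classical Hilbert Basis Theorem for skew polynomial rings and the standard transfer principle for filtered rings — using Theorems \ref{1.3.2} and \ref{1.3.3} as the bridge between them. First I would recall the classical fact for skew polynomial rings of endomorphism type: if $S$ is a left $($right$)$ Noetherian ring and $\theta:S\to S$ is an \emph{automorphism}, then $S[z;\theta]$ is again left $($right$)$ Noetherian. Since $A$ is bijective, Theorem \ref{1.3.2} guarantees that the associated graded ring $Gr(A)$ is a quasi-commutative bijective skew $PBW$ extension of $R$, and Theorem \ref{1.3.3} then supplies an isomorphism
\[
Gr(A)\cong R[z_1;\theta_1]\cdots[z_n;\theta_n],
\]
in which each $\theta_i$ is bijective. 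Applying the classical result $n$ times, starting from the hypothesis that $R$ is left $($right$)$ Noetherian, I would conclude that $Gr(A)$ is left $($right$)$ Noetherian.

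The second ingredient is the transfer principle for filtered rings: if $A$ carries an exhaustive, positive (hence separated) filtration $\{F_m\}_{m\ge 0}$ whose associated graded ring $Gr(A)$ is left $($right$)$ Noetherian, then $A$ itself is left $($right$)$ Noetherian. By Theorem \ref{1.3.2} the degree filtration \eqref{eq1.3.1a} is exactly of this type: it satisfies $F_mF_k\subseteq F_{m+k}$, $F_0=R$, $F_m=0$ for $m<0$, and $\bigcup_{m\ge 0}F_m=A$. Combining this with the previous paragraph immediately yields that $A$ is left $($right$)$ Noetherian.

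For completeness I would include the proof of the transfer principle, since it is the only step carrying real content. Given a left ideal $I\subseteq A$, I would form the induced filtration $\{F_m\cap I\}$ and the associated graded left ideal $Gr(I)=\bigoplus_{m}(F_m\cap I)/(F_{m-1}\cap I)$ inside $Gr(A)$. Because $Gr(A)$ is left Noetherian, $Gr(I)$ is finitely generated; I would then pick finitely many elements of $I$ whose principal symbols generate $Gr(I)$ and show, by induction on the filtration degree, that these elements already generate $I$ as a left ideal of $A$. The point that makes the induction valid is that the filtration is bounded below (positive), so there is a base case and the procedure terminates. The right-handed case is entirely symmetric, replacing left ideals by right ideals throughout.

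I expect the main obstacle to be not any single calculation but rather invoking the classical skew polynomial Hilbert Basis Theorem in the correct generality: it is essential here that each $\theta_i$ be an automorphism and not merely an injective endomorphism, which is precisely what the bijectivity of $A$ secures via Theorem \ref{1.3.3}(ii). Without this hypothesis the iterated skew polynomial ring need not be Noetherian, and the entire reduction would collapse; this is why the theorem is stated for bijective extensions.
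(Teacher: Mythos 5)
Your proof is correct and follows essentially the same route as the one the paper cites from \cite{lezamareyes1}: pass to the associated graded ring via Theorem \ref{1.3.2}, identify $Gr(A)$ as an iterated skew polynomial ring with bijective endomorphisms via Theorem \ref{1.3.3}, apply the classical skew Hilbert Basis Theorem, and transfer Noetherianity back along the positive exhaustive filtration. Your closing remark correctly pinpoints why bijectivity is the essential hypothesis.
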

\begin{proof}
 \cite{lezamareyes1}.
\end{proof}

\section{Preliminary lemmas}

Let us recall first the non-commutative localization. If $R$ is a ring and $S$ is a multiplicative
subset of $R$ (i.e., $1\in S, 0\notin S, ss'\in S$, for $s,s'\in S$) then the left ring of
fractions of $R$ exists if and only if two conditions hold: (i) given $a\in R$ and $s\in S$ such
that $as=0$, then there exists $s'\in S$ such that $s'a=0$; (ii) (\textit{the left Ore condition})
given $a\in R$ and $s\in S$ there exist $s'\in S$ and $a'\in R$ such that $s'a=a's$. When these
conditions hold, the left ring of fractions of $R$ with respect to $S$ is denoted by $S^{-1}R$, and
its elements are classes represented by fractions: two elements $\frac{a}{s}$, $\frac{b}{t}$ are
equal if and only if there exist $c,d\in R$ such that $ca=db, cs=dt\in S$. The operations of
$S^{-1}R$ are given by $\frac{a}{s}+\frac{b}{t}:=\frac{ca+db}{u}$, where $u:=cs=dt\in S$, for some
$c,d\in R$ (the Ore's condition applied to $s$ and $t$), and
$\frac{a}{s}\frac{b}{t}:=\frac{cb}{us}$, where $ua=ct$, for some $u\in S$ and $c\in R$ (the Ore's
condition applied to $a$ and $t$). In a similar way are defined the right rings of fractions. Note
that any domain $R$ satisfies (i) with respect to any multiplicative subset $S$, and it is said
that $R$ is a \textit{left Ore domain} if $R$ satisfies (ii) with respect to $S:=R-\{0\}$. The
elements of the ring $R$ that are non-zero divisors are called \textit{regular} and the set of
regular elements of $R$ will denoted by $S_0(R)$.

In this second section we localize skew polynomial rings and skew $PBW$ extensions by
multiplicative subsets of the ring of coefficients. The basic results presented here will used in
the other sections of the present paper. We start recalling a couple of well known facts.

\begin{proposition}\label{1.3.1a}
Let $\sigma$ be an automorphism of $R$ and $R[x;\sigma,\delta]$ the left skew polynomial ring.
Then, the right skew polynomial ring $R[x;\sigma^{-1},-\delta \sigma^{-1}]_r$ is isomorphic to
$R[x;\sigma,\delta]$.
\end{proposition}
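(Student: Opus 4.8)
The plan is to construct an explicit isomorphism between the left skew polynomial ring $R[x;\sigma,\delta]$ and the right skew polynomial ring $R[x;\sigma^{-1},-\delta\sigma^{-1}]_r$. First I would fix conventions: in the left skew polynomial ring the variable multiplies coefficients on the left via $xr=\sigma(r)x+\delta(r)$, while in a right skew polynomial ring $R[x;\tau,\partial]_r$ the coefficients sit on the left of the powers of $x$ and the variable satisfies a reversed rule $rx=x\tau(r)+\partial(r)$ (equivalently, multiplication by $x$ on the right acts through $\tau$ and $\partial$). The key observation is that the same underlying additive group $\bigoplus_i Rx^i$ can carry both structures, so I would take the identity map on this free left $R$-module (sending $\sum r_i x^i$ to $\sum r_i x^i$) as the candidate isomorphism and verify it respects the two multiplications.

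The central verification reduces, by $R$-linearity and the distributive laws, to checking the single commutation relation on $x$ and an arbitrary $r\in R$. In the left ring we have $xr=\sigma(r)x+\delta(r)$; since $\sigma$ is an automorphism I can rewrite this, substituting $r\mapsto\sigma^{-1}(r)$, as $x\sigma^{-1}(r)=rx+\delta\sigma^{-1}(r)$, i.e. $rx=x\sigma^{-1}(r)-\delta\sigma^{-1}(r)$. This is exactly the defining relation of the right skew polynomial ring with automorphism $\sigma^{-1}$ and right derivation $-\delta\sigma^{-1}$, so the two rules agree under the identity correspondence. I would then confirm that $\sigma^{-1}$ is indeed an automorphism and that $-\delta\sigma^{-1}$ is a $\sigma^{-1}$-derivation of the appropriate (right-handed) type, using that $\delta$ is a $\sigma$-derivation; this is a short computation applying $\delta(ab)=\sigma(a)\delta(b)+\delta(a)b$ together with $\sigma^{-1}$.

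The step I expect to require the most care is not any deep argument but the bookkeeping of left-versus-right conventions, since the sign and the composition order in $-\delta\sigma^{-1}$ are dictated entirely by how one chooses to write the right skew polynomial multiplication. Once the normal form $\sum_i x^i r_i$ (or $\sum_i r_i x^i$, depending on convention) is fixed and the single generating relation is matched, extending multiplicatively is routine: the identity map is additive and bijective by construction, and it preserves products because both products are determined by the commutation relation together with associativity and distributivity. I would therefore present the proof as: state the map, reduce compatibility to the generator relation via linearity, perform the one-line rewriting above using the invertibility of $\sigma$, and conclude that the identity is the desired ring isomorphism.
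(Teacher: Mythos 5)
Your proposal is correct. The paper gives no argument of its own for this proposition---it simply cites McConnell--Robson---and what you describe is exactly the standard proof found there: the underlying additive group carries both structures, the identity map is the isomorphism, and everything reduces to rewriting $xr=\sigma(r)x+\delta(r)$ as $rx=x\sigma^{-1}(r)-\delta\sigma^{-1}(r)$ and checking that $-\delta\sigma^{-1}$ is a right $\sigma^{-1}$-derivation, i.e.\ $\partial(ab)=\partial(a)\sigma^{-1}(b)+a\,\partial(b)$, which follows from $\delta(ab)=\sigma(a)\delta(b)+\delta(a)b$. Two small points to tidy when writing it up: you say the coefficients of the right ring ``sit on the left of the powers of $x$'' when you mean on the right (your later normal form $\sum_i x^i r_i$ is the correct one); and you should make explicit that $\{x^i\}_{i\ge 0}$ is also a \emph{right} $R$-basis of the same additive group, which is where the surjectivity of $\sigma$ is genuinely used---for a non-surjective endomorphism the statement fails.
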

\begin{proof}
See \cite{McConnell}.
\end{proof}

\begin{proposition}\label{2.4.2a}
Let $R$ be a ring and $S\subset R$ a multiplicative subset. If $Q:=S^{-1}R$ exists, then any finite
set $\{q_{1},\ldots, q_{n}\}$ of elements of $Q$ posses a common denominator, i.e., there exist
$r_{1},\ldots,r_{n}\in R$ and $s\in S$ such that $q_{i}=\frac{r_i}{s}, 1\leq i\leq n$.
\end{proposition}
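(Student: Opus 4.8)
The plan is to argue by induction on $n$, the crux being the case $n=2$, where the left Ore condition is used to replace two different denominators by a single one. Throughout I will use the explicit description of $Q:=S^{-1}R$ recalled just before the statement, in particular the equality criterion for fractions and the left Ore condition, both of which are available since $Q$ is assumed to exist.

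First I would record the elementary fact that one may multiply numerator and denominator of a fraction on the left by any element that keeps the denominator in $S$: if $c\in R$ and $cs\in S$, then $\frac{a}{s}=\frac{ca}{cs}$ in $Q$. This is immediate from the equality criterion, taking the witnesses $c$ and $d=1$, so that $c\cdot a = 1\cdot(ca)$ and $cs = 1\cdot(cs)\in S$. In particular, for any $s'\in S$ one has $\frac{a}{s}=\frac{s'a}{s's}$, the new denominator $s's$ lying in $S$ because $S$ is multiplicative.

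Next, the case $n=2$. Writing $q_1=\frac{a_1}{s_1}$ and $q_2=\frac{a_2}{s_2}$ with $a_i\in R$ and $s_i\in S$, I would apply the left Ore condition to the element $s_1\in R$ and the element $s_2\in S$: there exist $u\in S$ and $v\in R$ with $us_1=vs_2$. Put $s:=us_1\in S$; then also $s=vs_2$, and in particular $vs_2\in S$. By the observation above, $q_1=\frac{ua_1}{us_1}=\frac{ua_1}{s}$ and $q_2=\frac{va_2}{vs_2}=\frac{va_2}{s}$, so both fractions are now expressed over the common denominator $s$.

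Finally, for the inductive step, assume $q_1,\ldots,q_{n-1}$ have already been written over a common denominator $t\in S$, say $q_i=\frac{b_i}{t}$ for $1\leq i\leq n-1$, and write $q_n=\frac{a_n}{s_n}$. Applying the left Ore condition to $t\in R$ and $s_n\in S$ yields $u\in S$ and $v\in R$ with $ut=vs_n$; setting $s:=ut\in S$ gives $q_i=\frac{ub_i}{s}$ for every $i\leq n-1$ and $q_n=\frac{va_n}{s}$, so all $n$ fractions share the denominator $s$, completing the induction. I do not expect a genuine obstacle in this argument; the only point requiring care is the bookkeeping in the equality criterion that justifies the left multiplications, together with the repeated verification that every denominator produced remains in $S$, which is exactly what the multiplicativity of $S$ guarantees.
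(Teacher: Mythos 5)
Your proof is correct: the reduction to the two-element case via the left Ore condition applied to $s_1$ and $s_2$, together with the observation that $\frac{a}{s}=\frac{ca}{cs}$ whenever $cs\in S$, is exactly the standard argument. The paper itself gives no proof here, deferring to \cite{McConnell}, Lemma 2.1.8, and your induction is essentially that reference's argument, so there is nothing to compare beyond noting that you have supplied the details the paper omits.
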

\begin{proof}
See \cite{McConnell}, Lemma 2.1.8.
\end{proof}

The first preliminary result is the following lemma, the first part of which is well known and can
be found in \cite{Gomez-Torrecillas}.

\begin{lemma}\label{2.6.3}
Let $R$ be a ring and $S\subset R$ a multiplicative subset.
\begin{enumerate}
\item[\rm (a)]If $S^{-1}R$ exists and $\sigma(S)\subseteq S$, then
\begin{equation}\label{ecu2.6.1b}
S^{-1}(R[x;\sigma,\delta])\cong (S^{-1}R)[x;\overline{\sigma},\overline{\delta}],
\end{equation}
with
\begin{align*}
S^{-1}R & \xrightarrow{\overline{\sigma}} S^{-1}R & S^{-1}R & \xrightarrow{\overline{\delta}} S^{-1}R\\
\frac{a}{s} & \mapsto \frac{\sigma(a)}{\sigma(s)} & \frac{a}{s} & \mapsto
-\frac{\delta(s)}{\sigma(s)}\frac{a}{s}+\frac{\delta(a)}{\sigma(s)}
\end{align*}
\item[\rm (b)]If $RS^{-1}$ exists and $\sigma$ is bijective with $\sigma(S)=S$, then
\begin{equation}\label{ecu2.6.1c}
(R[x;\sigma,\delta])S^{-1}\cong (RS^{-1})[x;\widetilde{\sigma},\widetilde{\delta}],
\end{equation}
with
\begin{align*}
RS^{-1} & \xrightarrow{\widetilde{\sigma}} RS^{-1} & RS^{-1} & \xrightarrow{\widetilde{\delta}} RS^{-1}\\
\frac{a}{s} & \mapsto \frac{\sigma(a)}{\sigma(s)} & \frac{a}{s} & \mapsto
-\frac{\sigma(a)}{\sigma(s)}\frac{\delta(s)}{s}+\frac{\delta(a)}{s}
\end{align*}
\end{enumerate}
\end{lemma}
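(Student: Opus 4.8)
The plan is to treat the two parts as left/right mirror images of a single computation, using that the ring-structure half of (a) is already available from \cite{Gomez-Torrecillas}. First I would check that $S$, regarded as a subset of $A:=R[x;\sigma,\delta]$, is again a left denominator set, so that $S^{-1}A$ exists. The Ore condition is the heart of this: given $f=\sum_i r_ix^i\in A$ and $s\in S$, one pushes $s$ to the left through each power of $x$ via the iterated commutation rule $x^{i}s=\sigma^{i}(s)x^{i}+(\text{terms of degree}<i)$; since $\sigma(S)\subseteq S$ we have $\sigma^{i}(s)\in S$, and assembling a common left multiple yields the required $s'\in S$ and $f'\in A$ with $s'f=f's$. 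The reversibility axiom (if $fs=0$ then $s''f=0$ for some $s''\in S$) lifts from $R$ to $A$ by a descending induction on $\deg f$: the top coefficient of $fs$ is $r_d\sigma^{d}(s)$, so condition (i) in $R$ produces an $S$-element annihilating $r_d$ on the left, and multiplying the lower-degree coefficient equations by the accumulated factor reduces each of them to the same shape, peeling off one $S$-factor at a time.

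Next I would verify that the stated maps $\overline\sigma$ and $\overline\delta$ are well defined on $S^{-1}R$, that $\overline\sigma$ is a ring endomorphism and $\overline\delta$ a $\overline\sigma$-derivation; here $\sigma(s)\in S$ is exactly what makes the fractions $\sigma(a)/\sigma(s)$ and the expression defining $\overline\delta$ meaningful, and independence of the chosen representative of $a/s$ is a routine check. With these in hand the isomorphism is produced by universal properties. Writing $B:=(S^{-1}R)[x;\overline\sigma,\overline\delta]$, the composite $R\to S^{-1}R\hookrightarrow B$ together with $x\mapsto x$ respects $xr=\sigma(r)x+\delta(r)$ (because $\overline\delta(r/1)=\delta(r)/1$), hence extends to a homomorphism $A\to B$ sending each $s\in S$ to a unit, which therefore factors through $S^{-1}A$. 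Conversely $S^{-1}R\to S^{-1}A$ with $x\mapsto x$ extends to $B\to S^{-1}A$ provided the defining relation of $B$ holds in $S^{-1}A$; this is the one genuine computation, namely evaluating $x\cdot\tfrac{a}{s}$ inside $S^{-1}A$. Using the left-fraction convention $a/s=s^{-1}a$ and $xs^{-1}=\sigma(s)^{-1}x-\sigma(s)^{-1}\delta(s)s^{-1}$ one gets $x\cdot\tfrac{a}{s}=\tfrac{\sigma(a)}{\sigma(s)}\,x+\bigl(-\tfrac{\delta(s)}{\sigma(s)}\tfrac{a}{s}+\tfrac{\delta(a)}{\sigma(s)}\bigr)=\overline\sigma(\tfrac{a}{s})x+\overline\delta(\tfrac{a}{s})$, which is precisely the asserted formula; the two homomorphisms are then mutually inverse on generators.

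For part (b) I would reduce to (a) by passing to opposite rings, which turns right fractions into left ones. The hypotheses that $\sigma$ be bijective and $\sigma(S)=S$ are exactly what the mirror argument needs: to move $x$ past a scalar on the right one uses $rx=x\sigma^{-1}(r)-\delta(\sigma^{-1}(r))$, so $\sigma^{-1}$ must exist and $\sigma^{-1}(S)\subseteq S$, i.e. $\sigma(S)=S$. Equivalently one may invoke Proposition \ref{1.3.1a} to realize $A$ as the right skew polynomial ring $R[x;\sigma^{-1},-\delta\sigma^{-1}]_r$ and apply the right-handed analogue of (a). Repeating the key computation with the right-fraction convention $a/s=as^{-1}$ yields $x\cdot\tfrac{a}{s}=\tfrac{\sigma(a)}{\sigma(s)}x+\bigl(-\tfrac{\sigma(a)}{\sigma(s)}\tfrac{\delta(s)}{s}+\tfrac{\delta(a)}{s}\bigr)$, matching the stated $\widetilde\sigma,\widetilde\delta$, and the isomorphism follows as before.

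The main obstacle I expect is bookkeeping rather than ideas: verifying that $\overline\delta$ (and $\widetilde\delta$) are genuine $\overline\sigma$-derivations independent of the fraction representatives, and carrying the derivation formulas correctly through the opposite-ring/reversal identifications in (b) so that one lands on exactly the asserted expressions. The denominator-set verification is conceptually the crux but becomes routine once the commutation formula $x^{i}s=\sigma^{i}(s)x^{i}+\cdots$ and its inverse-$\sigma$ counterpart are recorded.
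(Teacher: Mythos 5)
Your proposal is correct and follows essentially the same route as the paper: part (a) is the standard localization argument that the paper simply outsources to the cited reference (G\'omez-Torrecillas et al.), and your part (b) — realizing $R[x;\sigma,\delta]$ as the right skew polynomial ring $R[x;\sigma^{-1},-\delta\sigma^{-1}]_r$ via Proposition \ref{1.3.1a} and applying the right-handed analogue of (a), using $\sigma(S)=S$ to get $\sigma^{-1}(S)\subseteq S$ — is exactly the paper's argument. The only cosmetic difference is that you verify the formulas for $\widetilde{\sigma},\widetilde{\delta}$ by computing $x\cdot as^{-1}$ directly, whereas the paper checks $(\widetilde{\theta})^{-1}=\widetilde{\sigma}$ and $-\widetilde{\gamma}(\widetilde{\theta})^{-1}=\widetilde{\delta}$; both computations are routine and equivalent.
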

\begin{proof}
(a) The sketch of the proof can be found in \cite{Gomez-Torrecillas}, Chapter 8, Lemma 1.10 and
Proposition 1.11.

(b) From Proposition \ref{1.3.1a}, we have $R[x;\sigma^{-1},-\delta \sigma^{-1}]_d\cong
R[x;\sigma,\delta]$. Let $\theta:=\sigma^{-1}$ and $\gamma:=-\delta \sigma^{-1}$, then
$R[x;\theta,\gamma]_d\cong R[x;\sigma,\delta]$, so $(R[x;\theta,\gamma]_d)S^{-1}\cong
(R[x;\sigma,\delta])S^{-1}$. Adapting the proof of \cite{Gomez-Torrecillas}, but for the right side
(the inclusion $\theta(S)\subset S$ is guaranteed by the condition $\sigma(S)=S$), we obtain
\begin{center}
$(R[x;\theta,\gamma]_d)S^{-1}\cong (RS^{-1})[x;\widetilde{\theta},\widetilde{\gamma}]_d$, with
$\widetilde{\theta}(\frac{a}{s}):=\frac{\theta(a)}{\theta(s)}$,
$\widetilde{\gamma}(\frac{a}{s}):=-\frac{a}{s}\frac{\gamma(s)}{\theta(s)}+\frac{\gamma(a)}{\theta(s)}$.
\end{center}
Hence, $(R[x;\sigma,\delta])S^{-1}\cong
(RS^{-1})[x;(\widetilde{\theta})^{-1},-\widetilde{\gamma}(\widetilde{\theta})^{-1}]$. But note that
$(\widetilde{\theta})^{-1}=\widetilde{\sigma}$ and
$-\widetilde{\gamma}(\widetilde{\theta})^{-1}=\widetilde{\delta}$, where
$\widetilde{\sigma},\widetilde{\delta}$ are defined as in the statement of the theorem. In fact, if
$(\widetilde{\theta})^{-1}(\frac{a}{s})=\frac{b}{t}$, then
$\frac{a}{s}=\frac{\theta(b)}{\theta(t)}=\frac{\sigma^{-1}(b)}{\sigma^{-1}(t)}$ and there exist
$c,d\in A$ such that $ac=\sigma^{-1}(b)d$ and $sc=\sigma^{-1}(t)d\in S$. From this we get
$\sigma(a)\sigma(c)=b\sigma(d)$ and $\sigma(s)\sigma(c)=t\sigma(d)\in S$, i.e.,
$\frac{\sigma(a)}{\sigma(s)}=\frac{b}{t}$. For the other equality we have
$-\widetilde{\gamma}(\widetilde{\theta})^{-1}(\frac{a}{s})=-\widetilde{\gamma}(\frac{\sigma(a)}{\sigma(s)})=-[-\frac{\sigma(a)}{\sigma(s)}
\frac{\gamma(\sigma(s))}{\theta(\sigma(s))}+\frac{\gamma(\sigma(a))}{\theta(\sigma(s))}]=-\frac{\sigma(a)}{\sigma(s)}\frac{\delta(s)}{s}+\frac{\delta(a)}{s}=
\widetilde{\delta}(\frac{a}{s})$.
\end{proof}

The previous lemma can be extended to iterated skew polynomial rings.

\begin{corollary}\label{2.1.4}
Let $R$ be a ring and $A:=R[x_1;\sigma_1,\delta_1]\cdots [x_n;\sigma_n,\delta_n]$ the iterated skew
polynomial ring. Let $S$ be a multiplicative system of $R$.
\begin{enumerate}
\item[\rm (a)]If $S^{-1}R$ exists and
$\sigma_i(S)\subseteq S$ for every $1\leq i\leq n$, then
\begin{center}
$S^{-1}A\cong (S^{-1}R)[x_1;\overline{\sigma_1},\overline{\delta_1}]\cdots
[x_n;\overline{\sigma_n},\overline{\delta_n}]$,
\end{center}
with
\begin{align*}
(S^{-1}R)[x_1;\overline{\sigma_1},\overline{\delta_1}]\cdots
[x_{i-1};\overline{\sigma_{i-1}},\overline{\delta_{i-1}}] & \xrightarrow{\overline{\sigma_i}}
(S^{-1}R)[x_1;\overline{\sigma_1},\overline{\delta_1}]\cdots [x_{i-1};\overline{\sigma_{i-1}},\overline{\delta_{i-1}}]\\
\frac{a}{s} & \mapsto \frac{\sigma_i(a)}{\sigma_i(s)}
\end{align*}
\begin{align*}
(S^{-1}R)[x_1;\overline{\sigma_1},\overline{\delta_1}]\cdots
[x_{i-1};\overline{\sigma_{i-1}},\overline{\delta_{i-1}}] & \xrightarrow{\overline{\delta_i}}
(S^{-1}R)[x_1;\overline{\sigma_1},\overline{\delta_1}]\cdots [x_{i-1};\overline{\sigma_{i-1}},\overline{\delta_{i-1}}]\\
\frac{a}{s} & \mapsto -\frac{\delta_i(s)}{\sigma_i(s)}\frac{a}{s}+\frac{\delta_i(a)}{\sigma_i(s)}
\end{align*}
\item[\rm (b)]If $RS^{-1}$ exists and $\sigma_i$ is bijective with $\sigma_i(S)=S$ for every $1\leq i\leq n$, then
\begin{equation*}
AS^{-1}\cong (RS^{-1})[x_1;\widetilde{\sigma_1},\widetilde{\delta_1}]\cdots
[x_n;\widetilde{\sigma_n},\widetilde{\delta_n}],
\end{equation*}
with
\begin{align*}
(RS^{-1})[x_1;\widetilde{\sigma_1},\widetilde{\delta_1}]\cdots
[\widetilde{x_{i-1}};\widetilde{\sigma_{i-1}},\widetilde{\delta_{i-1}}] &
\xrightarrow{\widetilde{\sigma_i}} (RS^{-1})[x_1;\widetilde{\sigma_1},\widetilde{\delta_1}]\cdots
[\widetilde{x_{i-1}};\widetilde{\sigma_{i-1}},\widetilde{\delta_{i-1}}]\\
\frac{a}{s} & \mapsto \frac{\sigma_i(a)}{\sigma_i(s)}
\end{align*}
\begin{align*}
(RS^{-1})[x_1;\widetilde{\sigma_1},\widetilde{\delta_1}]\cdots
[\widetilde{x_{i-1}};\widetilde{\sigma_{i-1}},\widetilde{\delta_{i-1}}] &
\xrightarrow{\widetilde{\delta_i}} (RS^{-1})[x_1;\widetilde{\sigma_1},\widetilde{\delta_1}]\cdots
[\widetilde{x_{i-1}};\widetilde{\sigma_{i-1}},\widetilde{\delta_{i-1}}]\\
\frac{a}{s} & \mapsto -\frac{\sigma_i(a)}{\sigma_i(s)}\frac{\delta_i(s)}{s}+\frac{\delta_i(a)}{s}
\end{align*}
\end{enumerate}
\end{corollary}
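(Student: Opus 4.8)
The plan is to prove Corollary \ref{2.1.4} by induction on the number of variables $n$, using Lemma \ref{2.6.3} as the base case and the inductive step simultaneously. The key observation is that an iterated skew polynomial ring can be peeled off one variable at a time: writing $B:=R[x_1;\sigma_1,\delta_1]\cdots[x_{n-1};\sigma_{n-1},\delta_{n-1}]$, we have $A=B[x_n;\sigma_n,\delta_n]$, so a single application of Lemma \ref{2.6.3} to the \emph{top} variable $x_n$ over the coefficient ring $B$ will reduce the problem to localizing $B$, which is handled by the induction hypothesis.

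For part (a), first I would set up the induction hypothesis: assume $S^{-1}B\cong (S^{-1}R)[x_1;\overline{\sigma_1},\overline{\delta_1}]\cdots[x_{n-1};\overline{\sigma_{n-1}},\overline{\delta_{n-1}}]$ with the stated formulas for $\overline{\sigma_i}$ and $\overline{\delta_i}$. To apply Lemma \ref{2.6.3}(a) with $R$ replaced by $B$, I must verify that $S$ is still a multiplicative subset of $B$ (it is, since $S\subseteq R\subseteq B$), that $S^{-1}B$ exists (the induction hypothesis gives its explicit form), and crucially that $\sigma_n(S)\subseteq S$, which is exactly the hypothesis of the corollary. Lemma \ref{2.6.3}(a) then yields $S^{-1}A=S^{-1}(B[x_n;\sigma_n,\delta_n])\cong (S^{-1}B)[x_n;\overline{\sigma_n},\overline{\delta_n}]$, and substituting the induction hypothesis for $S^{-1}B$ gives the full iterated ring. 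The formulas for $\overline{\sigma_n},\overline{\delta_n}$ coming out of the lemma match those in the statement verbatim, so no recomputation is needed at the top level; the lower formulas are inherited from the induction hypothesis.

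Part (b) follows the same skeleton using Lemma \ref{2.6.3}(b) applied to the right localization. Here the hypotheses to track are that each $\sigma_i$ is bijective and $\sigma_i(S)=S$ (an equality, not merely containment), which is precisely what the lemma demands for the right-hand construction and what guarantees that $\widetilde{\sigma_n}$ is again bijective on $B S^{-1}$ so the induction can continue. I would again write $A=B[x_n;\sigma_n,\delta_n]$, apply Lemma \ref{2.6.3}(b) to peel off $x_n$, and substitute the inductive description of $BS^{-1}$.

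The main obstacle I anticipate is not the top-level application of the lemma but the bookkeeping needed to make the induction well-posed: one must check that the endomorphism and derivation $\overline{\sigma_n},\overline{\delta_n}$ (respectively $\widetilde{\sigma_n},\widetilde{\delta_n}$) produced by Lemma \ref{2.6.3} on the coefficient ring $S^{-1}B$ genuinely restrict to, and are compatible with, the maps $\overline{\sigma_i},\overline{\delta_i}$ supplied by the induction hypothesis on the lower-degree subrings, so that the composite isomorphism really is an isomorphism of iterated skew polynomial rings and not merely an abstract ring isomorphism. In particular I would confirm that $\sigma_n(S)\subseteq S$ (or $\sigma_n(S)=S$ in the bijective case) ensures $\overline{\sigma_n}$ (resp.\ $\widetilde{\sigma_n}$) is well-defined on all of $S^{-1}B$ and agrees on the image of $B$ with the extension of $\sigma_n$, which is where the hypotheses $\sigma_i(S)\subseteq S$ (resp.\ $\sigma_i(S)=S$) for \emph{all} $i$, rather than just $i=n$, are actually used.
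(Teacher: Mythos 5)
Your proposal is correct and follows essentially the same route as the paper: the paper also proves the corollary by iterating Lemma \ref{2.6.3}, using the identification $(S^{-1}R)[x_1;\overline{\sigma_1},\overline{\delta_1}]\cdots[x_{i-1};\overline{\sigma_{i-1}},\overline{\delta_{i-1}}]\cong S^{-1}(R[x_1;\sigma_1,\delta_1]\cdots[x_{i-1};\sigma_{i-1},\delta_{i-1}])$ so that elements of the intermediate coefficient rings are fractions $\frac{a}{s}$ with $s\in S$, which is exactly the compatibility bookkeeping you flag at the end. Your peeling off of the top variable over $B:=R[x_1;\sigma_1,\delta_1]\cdots[x_{n-1};\sigma_{n-1},\delta_{n-1}]$ is the same induction, just stated top-down rather than bottom-up.
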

\begin{proof}
The part (a) of the corollary follows from Lemma \ref{2.6.3} by iteration and observing that
\begin{center}
$(S^{-1}R)[x_1;\overline{\sigma_1},\overline{\delta_1}]\cdots
[x_{i-1};\overline{\sigma_{i-1}},\overline{\delta_{i-1}}]\cong
S^{-1}(R[x_1;\sigma_1,\delta_1]\cdots [x_{i-1};\sigma_{i-1},\delta_{i-1}])$,
\end{center}
thus any element of $(S^{-1}R)[x_1;\overline{\sigma_1},\overline{\delta_1}]\cdots
[x_{i-1};\overline{\sigma_{i-1}},\overline{\delta_{i-1}}]$ can be represented as a fraction
$\frac{a}{s}$, with $a\in R[x_1;\sigma_1,\delta_1]\cdots [x_{i-1};\sigma_{i-1},\delta_{i-1}]$ and
$s\in S$. The same remark apply for the part (b).
\end{proof}

\begin{corollary}\label{2.1.5}
Let $A:=R[z_1;\sigma_1]\cdots [z_{n};\sigma_n]$ be a quasi-commutative skew $PBW$ extension of a
ring $R$ and let $S$ be a multiplicative system of $R$.
\begin{enumerate}
\item[\rm (a)]If $S^{-1}R$ exists and
$\sigma_i(S)\subseteq S$ for every $1\leq i\leq n$, then
\begin{center}
$S^{-1}A\cong (S^{-1}R)[z_1;\overline{\sigma_1}]\cdots [z_{n};\overline{\sigma_n}]$.
\end{center}
In particular, if $A$ is bijective with $\sigma_i(S)=S$ for every $i$, then $S^{-1}A$ is a
quasi-commutative bijective skew $PBW$ extension of $S^{-1}R$.
\item[\rm (b)]If $RS^{-1}$ exists and $A$ is bijective with $\sigma_i(S)=S$ for every $1\leq i\leq n$,
then $AS^{-1}$ is a quasi-commutative bijective skew $PBW$ extension of $RS^{-1}$ and
\begin{equation*}
AS^{-1}\cong (RS^{-1})[x_1;\widetilde{\sigma_1}]\cdots [x_n;\widetilde{\sigma_n}].
\end{equation*}
\end{enumerate}
\end{corollary}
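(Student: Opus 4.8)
The plan is to derive this corollary directly from Corollary \ref{2.1.4} by specializing to the derivation-free case. Since $A$ is quasi-commutative, Theorem \ref{1.3.3} identifies $A$ with the iterated skew polynomial ring $R[z_1;\sigma_1]\cdots[z_n;\sigma_n]$ of endomorphism type, that is, with all derivations $\delta_i=0$. Thus both parts (a) and (b) reduce to invoking the two halves of Corollary \ref{2.1.4} after setting every $\delta_i$ equal to zero, and then checking that the resulting object is genuinely a skew $PBW$ extension with the claimed structure.

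For part (a) I would apply Corollary \ref{2.1.4}(a). The formula for the induced derivation there reads $\overline{\delta_i}(\frac{a}{s})=-\frac{\delta_i(s)}{\sigma_i(s)}\frac{a}{s}+\frac{\delta_i(a)}{\sigma_i(s)}$, which vanishes identically when $\delta_i=0$ since it is linear in $\delta_i$. Hence the localized ring is again of pure endomorphism type, giving $S^{-1}A\cong (S^{-1}R)[z_1;\overline{\sigma_1}]\cdots[z_n;\overline{\sigma_n}]$. For the ``in particular'' clause, when $A$ is bijective Theorem \ref{1.3.3}(ii) guarantees each $\sigma_i$ is bijective, and I would then verify that $\overline{\sigma_i}$ is bijective on $S^{-1}R$: surjectivity follows because $\sigma_i(S)=S$ lets us realize any fraction $\frac{b}{t}$ as $\frac{\sigma_i(a)}{\sigma_i(s)}$ with $a=\sigma_i^{-1}(b)$ and $s\in S$ chosen so that $\sigma_i(s)=t$, while injectivity is immediate from injectivity of $\sigma_i$ together with the fraction equivalence. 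Finally, an iterated skew polynomial ring of endomorphism type with bijective endomorphisms is precisely a quasi-commutative bijective skew $PBW$ extension (the converse direction of Theorem \ref{1.3.3}), so $S^{-1}A$ has the asserted structure.

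For part (b) the argument is symmetric, using Corollary \ref{2.1.4}(b). The induced derivation $\widetilde{\delta_i}(\frac{a}{s})=-\frac{\sigma_i(a)}{\sigma_i(s)}\frac{\delta_i(s)}{s}+\frac{\delta_i(a)}{s}$ again collapses to zero once $\delta_i=0$, yielding $AS^{-1}\cong (RS^{-1})[x_1;\widetilde{\sigma_1}]\cdots[x_n;\widetilde{\sigma_n}]$. Here bijectivity of $A$ is assumed outright, so every $\sigma_i$ is bijective by Theorem \ref{1.3.3}(ii), and the hypothesis $\sigma_i(S)=S$ is exactly what Corollary \ref{2.1.4}(b) requires. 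The verification that $\widetilde{\sigma_i}$ is bijective runs just as in part (a), and the same recognition of endomorphism-type iterated skew polynomial rings as quasi-commutative bijective skew $PBW$ extensions completes the argument.

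I expect no serious obstacle: the entire content is the bookkeeping observation that vanishing of the derivations is preserved under localization, which is immediate from the explicit formulas in Corollary \ref{2.1.4}, together with the routine bijectivity check for the localized endomorphisms. The only point requiring a moment's care is ensuring that the full strength $\sigma_i(S)=S$ (rather than merely $\sigma_i(S)\subseteq S$) is invoked wherever bijectivity of $\overline{\sigma_i}$ or $\widetilde{\sigma_i}$ is needed, since surjectivity of the localized map depends on being able to hit every denominator in $S$.
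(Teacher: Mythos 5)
Your proposal follows essentially the same route as the paper: both reduce the statement to Corollary \ref{2.1.4} by observing that the induced derivations vanish when $\delta_i=0$, and then verify that the localized endomorphisms are bijective and that the structure constants remain invertible. One caveat: your claim that injectivity of $\overline{\sigma_i}$ is ``immediate from injectivity of $\sigma_i$ together with the fraction equivalence'' understates what is needed. In the non-commutative localization, $\frac{\sigma_i(a)}{\sigma_i(s)}=\frac{0}{1}$ only gives you a witness $c$ with $c\,\sigma_i(a)=0$ and $c\,\sigma_i(s)\in S$; to conclude $\frac{a}{s}=0$ you must pull $c$ and the denominator back through $\sigma_i$, which uses surjectivity of $\sigma_i$ and $\sigma_i(S)=S$, not just injectivity --- this is exactly the argument the paper spells out. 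Since those hypotheses are available in the bijective case, your proof still goes through, but the attribution of which hypothesis does the work should be corrected; you should also record the one-line check that the constants $\frac{c_{i,j}}{1}$ are invertible in the localization, which the bijectivity of the skew $PBW$ structure requires beyond bijectivity of the $\sigma_i$.
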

\begin{proof}
This is a direct consequence of the previous corollary. Assuming that each $\sigma_i$ is bijective
and $\sigma_i(S)=S$, for each $1\leq i\leq n$, then each $\overline{\sigma_i}$ is bijective. In
fact, if $\frac{\sigma_i(a)}{\sigma_i(s)}=\frac{0}{1}$, then there exist $c,d\in
R[x_1;\sigma_1]\cdots [x_{i-1};\sigma_{i-1}]$ such that $c\sigma_i(a)=0$ and $c\sigma_i(s)=d\in S$.
Since $\sigma_i$ is surjective and $S=\sigma_i(S)$, there exist $c'\in R[x_1;\sigma_1]\cdots
[x_{i-1};\sigma_{i-1}]$ and $d'\in S$ such that $\sigma_i(c')=c$ and $\sigma_i(d')=d$, hence
$\sigma_i(c'a)=0$ and $\sigma_i(c's)=\sigma_i(d')$, but $\sigma_i$ is injective, then $c'a=0$ and
$c's=d'$. This means that $\frac{a}{s}=0$, i.e., $\overline{\sigma_i}$ is injective. It is clear
that $\overline{\sigma_i}$ is surjective. Finally, if the constants $c_{i,j}$ that define $A$ are
invertible (see Definition \ref{sigmapbwderivationtype}), then $\frac{c_{i,j}}{1}\in S^{-1}A$ are
also invertible.

For the part (b) the proof is analogous.
\end{proof}

Now we consider arbitrary bijective skew $PBW$ extensions and $S$ a multiplicative subset of $R$
consisting of regular elements, i.e., $S\subseteq S_0(R)$. The next powerful lemma generalizes
Lemma 14.2.7 of \cite{McConnell}.

\begin{lemma}\label{2.1.6}
Let $R$ be a ring and $A:=\sigma(R)\langle x_1,\dots, x_n\rangle$ a bijective skew $PBW$ extension
of $R$. Let $S\subseteq S_0(R)$ a multiplicative subset of $R$ such that $\sigma_i(S)=S$, for every
$1\leq i\leq n$, where $\sigma_i$ is defined by Proposition \ref{sigmadefinition}.
\begin{enumerate}
\item[\rm (a)]If $S^{-1}R$ exists, then $S^{-1}A$ exists and it is a bijective skew $PBW$
extension of $S^{-1}R$ with
\begin{center}
$S^{-1}A= \sigma(S^{-1}R)\langle x_1',\dots, x_n'\rangle$,
\end{center}
where $x_i':=\frac{x_i}{1}$ and the system of constants of $S^{-1}R$ is given by
$c_{i,j}':=\frac{c_{i,j}}{1}$, $c_{i,\frac{r}{s}}':=\frac{\sigma_i(r)}{\sigma_i(s)}$, $1\leq
i,j\leq n$.
\item[\rm (b)]If $RS^{-1}$ exists, then $AS^{-1}$ exists and it is a bijective skew $PBW$
extension of $RS^{-1}$ with
\begin{center}
$AS^{-1}= \sigma(RS^{-1})\langle x_1'',\dots, x_n''\rangle$,
\end{center}
where $x_i'':=\frac{x_i}{1}$ and the system of constants of $RS^{-1}$ is given by
$c_{i,j}'':=\frac{c_{i,j}}{1}$, $c_{i,\frac{r}{s}}'':=\frac{\sigma_i(r)}{\sigma_i(s)}$, $1\leq
i,j\leq n$.
\end{enumerate}
\end{lemma}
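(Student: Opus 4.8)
The plan is to prove part (a) and then obtain part (b) by a symmetric argument, just as was done in Lemma \ref{2.6.3}. The essential idea is to pass from the skew $PBW$ extension $A$ to an associated iterated skew polynomial ring where the previous localization results (Corollary \ref{2.1.4} and Corollary \ref{2.1.5}) already apply, and then transfer the conclusion back. Concretely, the first step is to verify that the left Ore condition holds for $S$ inside $A$, so that $S^{-1}A$ actually exists. Since $A$ is a domain when $R$ is (Proposition \ref{1.1.10a}) and $S\subseteq S_0(R)$ consists of regular elements, condition (i) of the Ore localization is automatic; the real content is the left Ore condition (ii): given $f\in A$ and $s\in S$, one must produce $s'\in S$ and $f'\in A$ with $s'f=f's$. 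I would establish this by induction on $\deg(f)$, using the commutation rule $x_i r=\sigma_i(r)x_i+\delta_i(r)$ from Proposition \ref{sigmadefinition} together with the hypothesis $\sigma_i(S)=S$ to push denominators past the variables $x_i$; the key point is that applying a power $x^{\alpha}$ to $s\in S$ replaces $s$ by $\sigma^{\alpha}(s)\in S$ modulo lower-degree terms, by equation \eqref{611} of Theorem \ref{coefficientes}.

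Once existence of $S^{-1}A$ is secured, the second step is to identify its structure. I would check directly that $S^{-1}A$ is a left $S^{-1}R$-free module with basis ${\rm Mon}(A)=\{x'^{\alpha}\}$ where $x_i'=\tfrac{x_i}{1}$: every element of $S^{-1}A$ has, by Proposition \ref{2.4.2a}, a common denominator $s\in S$, so it is of the form $\tfrac{1}{s}f$ with $f\in A$, and writing $f$ in the standard basis and clearing through $s$ (again using $\sigma_i(S)=S$) gives the required representation; freeness follows from freeness over $R$ together with regularity of the elements of $S$. Then I would verify conditions (iii) and (iv) of Definition \ref{gpbwextension} for $S^{-1}A$ over $S^{-1}R$. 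For (iii), the computation $x_i'\tfrac{r}{s}=\tfrac{x_i r}{s}$ combined with $x_i r=\sigma_i(r)x_i+\delta_i(r)$ and $\sigma_i(S)=S$ yields exactly the claimed constant $c_{i,\frac{r}{s}}'=\tfrac{\sigma_i(r)}{\sigma_i(s)}$; for (iv), the relation $x_jx_i-c_{i,j}x_ix_j\in R+Rx_1+\cdots+Rx_n$ localizes termwise to give $c_{i,j}'=\tfrac{c_{i,j}}{1}$. Bijectivity transfers because $\tfrac{c_{i,j}}{1}$ is invertible when $c_{i,j}$ is, and because each induced $\overline{\sigma_i}$ is bijective — this last fact is precisely the injectivity/surjectivity verification already carried out in the proof of Corollary \ref{2.1.5}, which relied on $\sigma_i(S)=S$.

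An alternative and cleaner route, which I would actually favor for the writeup, is to reduce everything to Corollary \ref{2.1.4} via the filtration of Theorem \ref{1.3.2}. Since $A$ is bijective, its associated graded ring $Gr(A)$ is a quasi-commutative bijective skew $PBW$ extension, hence by Theorem \ref{1.3.3} isomorphic to an iterated skew polynomial ring of endomorphism type; one localizes that iterated ring using Corollary \ref{2.1.4}(a), where existence and the explicit form of $\overline{\sigma_i},\overline{\delta_i}$ are already known, and then lifts back through the filtration to conclude that $S^{-1}A$ is itself a bijective skew $PBW$ extension with the stated constants. The main obstacle in either approach is the existence argument for $S^{-1}A$, i.e.\ checking the left Ore condition directly in the non-commutative, higher-degree setting: the interaction of the derivations $\delta_i$ with denominators produces lower-order correction terms that must be absorbed, and this is exactly where the hypotheses that $A$ is bijective and that $\sigma_i(S)=S$ are indispensable, guaranteeing that common denominators can be chosen inside $S$ after commuting them past all the $x_i$. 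Part (b) follows by the same reasoning applied on the right, invoking Proposition \ref{1.3.1a} to swap sides and using $\sigma_i(S)=S$ together with bijectivity of the $\sigma_i$, exactly as in the passage from part (a) to part (b) of Lemma \ref{2.6.3}.
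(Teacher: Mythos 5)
Your primary line of argument (the first two paragraphs) is essentially the paper's own proof: verify the left Ore condition for $S$ in $A$ by an induction that peels off leading terms using $x^{\alpha}r=\sigma^{\alpha}(r)x^{\alpha}+p_{\alpha,r}$ together with $\sigma^{\alpha}(S)\subseteq S$, then check that $S^{-1}A$ is free over $S^{-1}R$ on ${\rm Mon}\{x_1',\dots,x_n'\}$ and verify conditions (iii) and (iv) of Definition \ref{gpbwextension} with the stated constants $c_{i,j}'$ and $c_{i,\frac{r}{s}}'$. Two technical corrections to that route. First, the lemma does not assume $R$ is a domain, only $S\subseteq S_0(R)$, so condition (i) of the localization is not ``automatic'': you must prove that $fs=0$ forces $f=0$, and this requires the leading-coefficient argument ($lc(f)\,\sigma^{\alpha}(s)=0$ with $\sigma^{\alpha}(s)\in S$ regular in $R$), which is exactly how the paper handles it. Second, the induction should be run on the leading monomial with respect to the order $\succeq$ of Remark \ref{identities}, not on $\deg(f)$: subtracting $rx^{\alpha}s$ from $u_1f$ kills the leading monomial but need not lower the total degree, so an induction literally on degree as you state it does not close.

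The ``alternative and cleaner route'' that you say you would favor contains a genuine gap. Knowing that $Gr(A)$ is a quasi-commutative iterated skew polynomial ring that localizes well (Theorems \ref{1.3.2}, \ref{1.3.3}, Corollary \ref{2.1.4}) does not by itself give the existence of $S^{-1}A$, nor does it transfer the basis or the constants back to $A$: there is no result in the paper, and no soft argument, that lifts an Ore set from the associated graded ring to the filtered ring, and making that lifting rigorous amounts to redoing the leading-term induction on $A$ itself --- i.e.\ the direct proof. Likewise, Proposition \ref{1.3.1a} concerns a single skew polynomial ring $R[x;\sigma,\delta]$ and cannot be invoked to ``swap sides'' for a general skew $PBW$ extension; part (b) must be proved by mirroring the whole argument on the right, where bijectivity of the $\sigma_i$ enters concretely through expressions such as $\sigma^{-\alpha}(u_1)$ in the right Ore condition, and where the verification that $AS^{-1}$ is free as a left $RS^{-1}$-module on ${\rm Mon}\{x_1'',\dots,x_n''\}$ requires a separate common-denominator argument rather than following formally from part (a).
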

\begin{proof}
We will use the notation given in Definition \ref{1.1.6} and Remark \ref{identities}.

(a) Let $f\in A$ and $s\in S$ such that $fs=0$. This implies that $f=0$ and hence $sf=0$. In fact,
suppose that $f\neq 0$, let $lt(f):=cx^{\alpha}$, $c\in R-\{0\}$ and $x^{\alpha}\in Mon(A)$. Then
$c\sigma^{\alpha}(s)=0$, but since $\sigma^{\alpha}(s)\in S$, then $c=0$, a contradiction. Now, let
again $f\in A$ and $s\in S$, we have to find $u\in S$ and $g\in A$ such that $uf=gs$. If $f=0$ we
take $u=1$ and $g=0$. Let $f\neq 0$ and again $lt(f):=cx^{\alpha}$, then there exists $u_1\in S$
and $r\in R$ such that $u_1c=r\sigma^{\alpha}(s)$. Consider $u_1f-rx^{\alpha}s$; if
$u_1f-rx^{\alpha}s=0$, then the Ore condition is satisfied. Let $u_1f-rx^{\alpha}s\neq 0$, then
$lm(u_1f-rx^{\alpha}s)\prec lm(f)$. By induction on $lm$, there exists $u_2\in S$ and $g'\in A$
such that $u_2(u_1f-rx^{\alpha}s)=g's$. Thus, $uf=gs$, with $u:=u_2u_1$ and $g:=u_2rx^{\alpha}+g'$.
This proves that $S^{-1}A$ exists.

Let $R':=S^{-1}R$ and $A':=S^{-1}A$; from $R\subseteq A$ we get that $R'\hookrightarrow A'$. In
fact, the correspondence $\frac{r}{s}\mapsto \frac{r}{s}$ is an injective ring homomorphism since
if $\frac{r}{s}=\frac{0}{1}$ in $A'$, then $s^{-1}r=0$ and hence $r=0$, i.e.,
$\frac{r}{s}=\frac{0}{1}$ in $R'$. We denote $x_i':=\frac{x_i}{1}\in A'$ for every $1\leq i\leq n$;
since $S$ has not zero divisors and $Mon\{x_1,\dots,x_n\}$ is a left $R$-basis of $A$, then $A'$ is
a free left $R'$-module with basis $Mon\{x_1',\dots,x_n'\}$.

Let $c_{i,j}':=\frac{c_{i,j}}{1}$, then $c_{i,j}'\neq 0$ and $x_j'x_i'-c_{i,j}'x_i'x_j'\in R'+
R'x_i'+\cdots+R'x_n'$, for every $1\leq i,j\leq n$.

The endomorphisms $\sigma_i$ of $R$ and the $\sigma_i$-derivations $\delta_i$ that define $A$
(Proposition \ref{sigmadefinition}) induce endomorphisms $\overline{\sigma_i}$ of $R'$ and
$\overline{\sigma_i}$-derivations $\overline{\delta_i}$ of $R'$ (see Lemma \ref{2.6.3}). Since
$\sigma_i$ is bijective and $\sigma_i(S)=S$ then each $\overline{\sigma_i}$ is bijective (the proof
is similar as in Corollary \ref{2.1.5}). We we claim that
$x_i'\frac{r}{s}=\overline{\sigma_i}(\frac{r}{s})x_i'+\overline{\delta_i}(\frac{r}{s})$. Indeed,
\begin{center}
$\overline{\sigma_i}(\frac{r}{s})x_i'+\overline{\delta_i}(\frac{r}{s})=\frac{\sigma_i(r)x_i}{\sigma_i(s)}+\frac{\delta_i(r)}{\sigma_i(s)}-
\frac{\delta_i(s)}{\sigma_i(s)}\frac{r}{s}$
\end{center}
and
\begin{center}
$x_i'\frac{r}{s}=\frac{x_i}{1}\frac{r}{s}=\frac{c(x)r}{u}$, with $u\in S$, $c(x)\in A$ and
$ux_i=c(x)s$.
\end{center}
So $\deg(c(x))=1$ and $c(x)$ involves only $x_i$, hence $c(x)=c_1x_i+c_0$, where $c_0,c_1\in R$.
From this we get the relations
\begin{center}
$u=c_1\sigma_i(s)$, $c_1\delta_i(s)+c_0s=0$.
\end{center}
Therefore,
\begin{center}
$x_i'\frac{r}{s}=\frac{c_1x_ir+c_0r}{u}=\frac{c_1\sigma_i(r)x_i}{c_1\sigma_i(s)}+\frac{c_1\delta_i(r)}{c_1\sigma_i(s)}+\frac{c_0r}{c_1\sigma_i(s)}$,
\end{center}
but $\frac{c_1\sigma_i(r)x_i}{c_1\sigma_i(s)}=\frac{\sigma_i(r)x_i}{\sigma_i(s)}$,
$\frac{c_1\delta_i(r)}{c_1\sigma_i(s)}=\frac{\delta_i(r)}{\sigma_i(s)}$ and
$-\frac{\delta_i(s)}{\sigma_i(s)}\frac{r}{s}=-\frac{c_1\delta_i(s)}{c_1\sigma_i(s)}\frac{r}{s}=-\frac{-c_0s}{c_1\sigma_i(s)}
\frac{r}{s}=\frac{c_0r}{c_1\sigma_i(s)}$. This proved the claimed. Thus, given $\frac{r}{s}\in
R'-\{0\}$ there exists $c_{i,\frac{r}{s}}':=\overline{\sigma_i}(\frac{r}{s})\in R'-\{0\}$ such that
$x_i'\frac{r}{s}-c_{i,\frac{r}{s}}'x_i'\in R'$. This completes the proof that $S^{-1}A$ is an skew
$PBW$ extension of $S^{-1}R$.

(b) Let $f\in A$ and $s\in S$ such that $sf=0$, then $f=0$ and $fs=0$. This proved the first
condition for the existence of $AS^{-1}$. Now, we have to find $u\in S$ and $g\in A$ such that
$fu=sg$. If $f=0$ we take $u:=1$ and $g:=0$. Let $f\neq 0$, $lt(f):=cx^{\alpha}$; there exist
$u_1\in S$ and $r\in R$ such that $cu_1=sr$. Consider $f\sigma^{-\alpha}(u_1)-srx^{\alpha}$; if
$f\sigma^{-\alpha}(u_1)=srx^{\alpha}$, then the Ore condition is satisfied. Let
$f\sigma^{-\alpha}(u_1)-srx^{\alpha}\neq 0$, then $lm(f\sigma^{-\alpha}(u_1)-srx^{\alpha})\prec
lm(f)$. By induction on $lm$, there exist $u_2\in S$ and $g'\in A$ such that
$(f\sigma^{-\alpha}(u_1)-srx^{\alpha})u_2=sg'$. Then $fu=sg$, with $u:=\sigma^{-\alpha}(u_1)u_2$
and $g:=rx^{\alpha}u_2+g'$. This proves that $AS^{-1}$ exists.

Let $R'':=RS^{-1}$ and $A'':=AS^{-1}$; from $R\subseteq A$ we get that $R''\hookrightarrow A''$. In
fact, the correspondence $\frac{r}{s}\mapsto \frac{r}{s}$ is an injective ring homomorphism since
if $\frac{r}{s}=\frac{0}{1}$ in $A''$, then $rs^{-1}=0$ and hence $r=0$, i.e.,
$\frac{r}{s}=\frac{0}{1}$ in $R''$.

We note $x_i'':=\frac{x_i}{1}\in A''$ for every $1\leq i\leq n$. Let
$c_{i,j}'':=\frac{c_{i,j}}{1}$, then $c_{i,j}''\neq 0$ and $x_j''x_i''-c_{i,j}''x_i''x_j''\in R''+
R''x_i''+\cdots+R''x_n''$ for every $1\leq i,j\leq n$.

The endomorphisms $\sigma_i$ of $R$ and the $\sigma_i$-derivations $\delta_i$ that define $A$ (see
Proposition \ref{sigmadefinition}) induce endomorphisms $\widetilde{\sigma_i}$ of $R''$ and
$\widetilde{\sigma_i}$-derivations $\widetilde{\delta_i}$ of $R''$ (see Lemma \ref{2.6.3}). Note
that each $\widetilde{\sigma_i}$ is bijective. We claim that
$x_i''\frac{r}{s}=\widetilde{\sigma_i}(\frac{r}{s})x_i''+\widetilde{\delta_i}(\frac{r}{s})$.
Indeed,
\begin{center}
$x_i''\frac{r}{s}=\frac{x_i}{1}\frac{r}{s}=\frac{x_ir}{s}=\frac{\sigma_i(r)x_i+\delta_i(r)}{s}=\frac{\sigma_i(r)x_i}{s}+\frac{\delta_i(r)}{s}$.
\end{center}
On the other hand,
\begin{center}
$\widetilde{\sigma_i}(\frac{r}{s})x_i''+\widetilde{\delta_i}(\frac{r}{s})=\frac{\sigma_i(r)}{\sigma_i(s)}\frac{x_i}{1}-\frac{\sigma_i(r)}{\sigma_i(s)}
\frac{\delta_i(s)}{s}+\frac{\delta_i(r)}{s}$.
\end{center}
Thus, we must prove that
\begin{center}
$\frac{\sigma_i(r)x_i}{s}=\frac{\sigma_i(r)}{\sigma_i(s)}\frac{x_i}{1}-\frac{\sigma_i(r)}{\sigma_i(s)}
\frac{\delta_i(s)}{s}$
\end{center}
Applying the right Ore condition to $\sigma_i(s)$ and $x_i$ we get that $x_iu=\sigma_i(s)c(x)$,
with $u\in S$ and $c(x)\in A$. As in the part (a), $c(x)=cx_i+d$, with $c,d\in R$, so
$\sigma_i(u)=\sigma_i(s)c$ and $\delta_i(u)=\sigma_i(s)d$. Thus,
\begin{center}
$\frac{\sigma_i(r)}{\sigma_i(s)}\frac{x_i}{1}=\frac{\sigma_i(r)(cx_i+d)}{u}=\frac{\sigma_i(r)cx_i}{u}+\frac{\sigma_i(r)d}{u}$
\end{center}
and hence
\begin{center}
$\frac{\sigma_i(r)}{\sigma_i(s)}\frac{x_i}{1}-\frac{\sigma_i(r)}{\sigma_i(s)}
\frac{\delta_i(s)}{s}=\frac{\sigma_i(r)cx_i}{u}+\frac{\sigma_i(r)d}{u}-\frac{\sigma_i(r)}{\sigma_i(s)}
\frac{\delta_i(s)}{s}$,
\end{center}
but $u=s\sigma_i^{-1}(c)$, so
\begin{center}
$\frac{\sigma_i(r)cx_i}{u}=\frac{\sigma_i(r)}{1}\frac{cx_i}{u}=\frac{\sigma_i(r)}{1}\frac{x_i\sigma_i^{-1}(c)-\delta_i(\sigma_i^{-1}(c))}{s\sigma_i^{-1}(c)}
=\frac{\sigma_i(r)}{1}\frac{x_i\sigma_i^{-1}(c)}{s\sigma_i^{-1}(c)}-\frac{\delta_i(\sigma_i^{-1}(c))}{s\sigma_i^{-1}(c)}=
\frac{\sigma_i(r)}{1}\frac{x_i}{s}-\frac{\delta_i(\sigma_i^{-1}(c))}{s\sigma_i^{-1}(c)}=\frac{\sigma_i(r)x_i}{s}-\frac{\delta_i(\sigma_i^{-1}(c))}{s\sigma_i^{-1}(c)}$.
\end{center}
Hence, the problem is reduced to prove the equality
\begin{center}
$\frac{\sigma_i(r)d}{u}-\frac{\sigma_i(r)}{\sigma_i(s)}\frac{\delta_i(s)}{s}=\frac{\delta_i(\sigma_i^{-1}(c))}{s\sigma_i^{-1}(c)}$
\end{center}
or equivalently, to prove
\begin{center}
$\frac{\sigma_i(r)d}{u}-\frac{\delta_i(\sigma_i^{-1}(c))}{u}=\frac{\sigma_i(r)}{\sigma_i(s)}\frac{\delta_i(s)}{s}$.
\end{center}
Note that
$\delta_i(u)=\sigma_i(s)\delta_i(\sigma_i^{-1}(c))+\delta_i(s)\sigma_i^{-1}(c)=\sigma_i(s)d$, i.e.,
$\delta_i(s)\sigma_i^{-1}(c)=\sigma_i(s)(d-\delta_i(\sigma_i^{-1}(c)))$. But this relation
indicates that
\begin{center}
$\frac{\sigma_i(r)}{\sigma_i(s)}\frac{\delta_i(s)}{s}=\frac{\sigma_i(r)(d-\delta_i(\sigma_i^{-1}(c)))}{s\sigma_i^{-1}(c)}=
\frac{\sigma_i(r)d}{u}-\frac{\delta_i(\sigma_i^{-1}(c))}{u}$.
\end{center}
This proved the claimed. Thus, given $\frac{r}{s}\in R''-\{0\}$ there exists
$c_{i,\frac{r}{s}}'':=\widetilde{\sigma_i}(\frac{r}{s})\in R''-\{0\}$ such that
$x_i''\frac{r}{s}-c_{i,\frac{r}{s}}''x_i''\in R''$.

Now we will show that $A''$ is a free left $R''$-module with basis $Mon\{x_1'',\dots,x_n''\}$.
First note that $A''$ is generated by $Mon\{x_1'',\dots,x_n''\}$. In fact, let $z\in A''$, then $z$
has the form $z=\frac{(c_1x^{\alpha_1}+\cdots+c_tx^{\alpha_t})}{s}$, with $c_i\in R$,
$x^{\alpha_i}\in Mon\{x_1,\dots,x_n\}$, $1\leq i\leq t$, and $s\in S$. It is enough to show that
each summand $\frac{cx^{\alpha}}{s}$ is generated by $Mon\{x_1'',\dots,x_n''\}$. But observe that
$\frac{cx^{\alpha}}{s}=\frac{c}{1}\frac{x^{\alpha}}{1}\frac{1}{s}=\frac{c}{1}x''^{\alpha}\frac{1}{s}$
and, as in the proof of the part (a) of Theorem \ref{coefficientes},
$x''^{\alpha}\frac{1}{s}=(\widetilde{\sigma})^{\alpha}(\frac{1}{s})x''^{\alpha}+p_{\alpha,\frac{1}{s}}$,
where $p_{\alpha,\frac{1}{s}}$ is a left linear combination of elements of
$Mon\{x_1'',\dots,x_n''\}$ with coefficients in $R''$. Thus, $A''$ is left generated over $R''$ by
$Mon\{x_1'',\dots,x_n''\}$. Now let $\frac{r_1}{s_1},\dots,\frac{r_t}{s_t}\in R''$ and
$x''^{\alpha_1},\dots,x''^{\alpha_t}\in Mon\{x_1'',\dots,x_n''\}$ such that
$\frac{r_1}{s_1}x''^{\alpha_1}+\cdots+\frac{r_t}{s_t}x''^{\alpha_t}=0$. Taking common denominator
(Proposition \ref{2.4.2a}), and without lost of generality, we can write
$\frac{r_1}{s}\frac{x^{\alpha_1}}{1}+\cdots+\frac{r_t}{s}\frac{x^{\alpha_t}}{1}=0$, with $s\in S$;
moreover, we can assume that $x^{\alpha_1}\succ x^{\alpha_2}\succ\cdots \succ x^{\alpha_t}$. There
exist $u_i\in S$ and $g_i\in A$ such that $x^{\alpha_i}u_i=sg_i$, $1\leq i\leq t$, so
$\frac{r_1g_1}{u_1}+\cdots+\frac{r_tg_t}{u_t}=0$. Note that every $g_i\neq0$. Applying repeatedly
the Ore condition we find elements $a_i\in R$ such that $u_ia_i=u\in S$ and
$\frac{r_1g_1a_1}{u}+\cdots+\frac{r_tg_ta_t}{u}=0$. From this we find $w\in S$ such that
$r_1g_1a_1w+\cdots+r_tg_ta_tw=0$. Let $g_i=c_ix^{\beta_i}+g_i'$, with $lt(g_i)=c_ix^{\beta_i}\neq
0$ and $g_i'\in A$. From $x^{\alpha_i}u_i=sg_i$ we get that $\sigma^{\alpha_i}(u_i)=sc_i$ and
$\alpha_i=\beta_i$. In particular, $\sigma^{\alpha_1}(u_1)=sc_1$; moreover,
$r_1c_1\sigma^{\beta_1}(a_1w)=0$, but since $w\in S$, then $r_1c_1\sigma^{\beta_1}(a_1)=0$. Thus,
we have $r_1(c_1\sigma^{\beta_1}(a_1))=0$ and
$s(c_1\sigma^{\beta_1}(a_1))=\sigma^{\alpha_1}(u_1)\sigma^{\beta_1}(a_1)=\sigma^{\alpha_1}(u_1a_1)=\sigma^{\alpha_1}(u)\in
S$. This means that $\frac{r_1}{s}=0$. By induction on $t$ we get that every $\frac{r_i}{s}=0$.
This completes the proof that $AS^{-1}$ is an skew $PBW$ extension of $RS^{-1}$.
\end{proof}

\section{Ore's theorem}

This section deals with establishing sufficient conditions for an skew $PBW$ extension $A$ of a
ring $R$ be left (right) Ore domain, and hence, $A$ has left (right) total division ring of
fractions. In particular, we will extend the Ore's theorem to skew $PBW$ extensions. A first
elementary result is the following proposition.

\begin{proposition}\label{1.5.1}
If $R$ is a left {\rm(}right{\rm)} Noetherian domain and $A$ is a bijective skew $PBW$ extension of
$R$, then $A$ is a left {\rm(}right{\rm)} Ore domain, and hence, the left $($right$)$ division ring
of fractions of $A$ exists.
\end{proposition}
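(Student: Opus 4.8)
The plan is to assemble this statement from two facts already established in the paper and then invoke (or reprove) the classical implication that a one-sided Noetherian domain is a one-sided Ore domain; I will treat the left case, the right case being entirely symmetric. First I would observe that since $R$ is a domain, Proposition \ref{1.1.10a} gives that $A$ is a domain as well, so the regular elements of $A$ form the set $S_0(A)=A-\{0\}$, and condition (i) for the existence of the left ring of fractions (if $as=0$ with $s$ a nonzero element then some $s'a=0$) holds trivially, taking $s'=1$. Second, since $R$ is left Noetherian and $A$ is a \emph{bijective} skew $PBW$ extension, the Hilbert Basis Theorem \ref{1.3.4} shows that $A$ is left Noetherian. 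Thus the whole statement reduces to verifying that a left Noetherian domain satisfies the left Ore condition with respect to $S:=A-\{0\}$.

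For that reduced assertion, the key reformulation is that the left Ore condition for a domain is equivalent to the property that any two nonzero left ideals of $A$ meet nontrivially: if $Aa\cap As\neq 0$ for nonzero $a,s$, then a nonzero common element can be written $s'a=a's$ with $s'\neq 0$ (using that $A$ is a domain), which is exactly the required identity, and the converse is immediate. I would establish this intersection property through uniform (Goldie) dimension. Being left Noetherian, $A$ has finite left uniform dimension. For any nonzero $s\in A$, right multiplication $\rho_s:A\to A$, $x\mapsto xs$, is an injective endomorphism of the left $A$-module $A$ (here the domain hypothesis is used), so $\rho_s(A)=As$ has the same uniform dimension as $A$; since a submodule of full uniform dimension inside a module of finite uniform dimension is essential, $As$ is an essential left ideal. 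Essentiality then forces $As\cap Aa\neq 0$ for every nonzero $a$, which is precisely the left Ore condition.

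Finally, with conditions (i) and (ii) verified for $S=A-\{0\}$, the left total ring of fractions $S^{-1}A$ exists, and since every nonzero element becomes invertible it is a division ring, namely the left total division ring of fractions of $A$. The right-hand version follows by replacing $\rho_s$ with left multiplication $x\mapsto sx$, making $sA$ an essential right ideal, and by using the right Hilbert Basis Theorem, where the bijectivity of $A$ again guarantees that $A$ is right Noetherian.

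I expect no serious obstacle, since the statement is essentially a packaging of Proposition \ref{1.1.10a} and Theorem \ref{1.3.4} together with a standard ring-theoretic lemma; the only point demanding care is the classical implication that a Noetherian domain is an Ore domain, which may either be cited from a reference such as \cite{McConnell} or supplied by the short uniform-dimension argument sketched above.
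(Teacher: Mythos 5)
Your proof is correct and follows exactly the same decomposition as the paper: Proposition \ref{1.1.10a} gives that $A$ is a domain, Theorem \ref{1.3.4} gives that $A$ is left (right) Noetherian, and then one invokes the classical fact that a one-sided Noetherian domain is a one-sided Ore domain, which the paper simply cites as Theorem 2.1.15 of \cite{McConnell}. The only difference is that you supply the standard uniform-dimension proof of that classical fact rather than citing it, and that argument is sound.
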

\begin{proof}
It is well known that left (right) Noetherian domains are left (right) Ore domains (see
\cite{McConnell}, Theorem 2.1.15). The result is consequence of Proposition \ref{1.1.10a} and
Theorem \ref{1.3.4}.
\end{proof}

The main purpose of the present section is to replace the Noetherianity in Proposition \ref{1.5.1}
by the Ore condition. A preliminary result is needed.

\begin{proposition}\label{2.2.3}
Let $B$ be a domain and $S$ a multiplicative subset of $B$ such that $S^{-1}B$ exists. Then, $B$ is
left Ore domain if and only if $S^{-1}B$ is a left Ore domain. In such case
\begin{center}
$Q_l(B)\cong Q_l(S^{-1}B)$.
\end{center}
The right side version of the proposition holds too.
\end{proposition}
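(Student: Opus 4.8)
The plan is to reduce everything to the observation that, because $B$ is a domain, the multiplicative set $S$ satisfies $S\subseteq B\setminus\{0\}$, so $S^{-1}B$ sits \emph{between} $B$ and its total left quotient ring. I would set up three preliminary facts to be used freely. First, since $B$ is a domain, condition (i) for the localization is automatic and the canonical map $B\to S^{-1}B$, $b\mapsto\frac{b}{1}$, is an injective ring homomorphism. Second, over a domain the explicit description of $S^{-1}B$ recalled at the beginning of this section gives a cancellation principle: for $u\in S$, the equality $\frac{x}{u}=\frac{y}{u}$ in $S^{-1}B$ forces $x=y$ in $B$ (clear the common denominator $\alpha x=\beta y$, $\alpha u=\beta u\in S$, then cancel $u$ and the nonzero $\alpha$ on the left). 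Third, I would use Proposition \ref{2.4.2a} to place finitely many fractions over a common denominator.

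For the forward implication I would assume $B$ is a left Ore domain, so $Q_l(B)=(B\setminus\{0\})^{-1}B$ is a division ring in which $B$ is a left order. Every element of $S$ is nonzero, hence invertible in $Q_l(B)$, so by the universal property of the Ore localization the inclusion $B\hookrightarrow Q_l(B)$ factors through a ring homomorphism $\varphi\colon S^{-1}B\to Q_l(B)$, $\frac{a}{s}\mapsto s^{-1}a$; this $\varphi$ is injective because $B$ is a domain ($s^{-1}a=0$ gives $a=0$). Identifying $S^{-1}B$ with its image produces a chain $B\subseteq S^{-1}B\subseteq Q_l(B)$. Now every element of $Q_l(B)$ is of the form $t^{-1}b$ with $t\in B\setminus\{0\}$ and $b\in B$, and both $t$ and $b$ already lie in $S^{-1}B$; hence $S^{-1}B$ is itself a left order in the division ring $Q_l(B)$. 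By the standard characterization of Ore domains as orders in division rings together with the uniqueness of the classical left quotient ring (see \cite{McConnell}), this shows $S^{-1}B$ is a left Ore domain and that $Q_l(S^{-1}B)\cong Q_l(B)$.

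For the converse I would assume $S^{-1}B$ is a left Ore domain and verify the left Ore condition for $B$ directly. Fix $a\in B$ and $s_0\in B\setminus\{0\}$; I may assume $a\neq0$ (else take $s'=1$, $a'=0$). Applying the left Ore condition of $S^{-1}B$ to the nonzero elements $\frac{a}{1}$ and $\frac{s_0}{1}$ yields a nonzero $c$ and some $d$ in $S^{-1}B$ with $c\,\frac{a}{1}=d\,\frac{s_0}{1}$. Writing $c=\frac{c_1}{u}$ and $d=\frac{d_1}{u}$ over a common denominator $u\in S$ (Proposition \ref{2.4.2a}), the product rule simplifies when the second factor has denominator $1$, giving $c\,\frac{a}{1}=\frac{c_1a}{u}$ and $d\,\frac{s_0}{1}=\frac{d_1s_0}{u}$. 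The cancellation principle then turns $\frac{c_1a}{u}=\frac{d_1s_0}{u}$ into $c_1a=d_1s_0$ in $B$, and $c_1\neq0$ since $c\neq0$ and $a\neq0$ keep $c\,\frac{a}{1}\neq0$ in the domain $S^{-1}B$. Thus $s':=c_1\in B\setminus\{0\}$ and $a':=d_1$ satisfy $s'a=a's_0$, which is exactly the left Ore condition; as $B$ is a domain, $B$ is therefore a left Ore domain, and the isomorphism $Q_l(B)\cong Q_l(S^{-1}B)$ follows from the forward direction.

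The main obstacle is not conceptual but the careful fraction bookkeeping in the converse: one must confirm that multiplication of a general fraction by a fraction with denominator $1$ really reduces to $\frac{c_1}{u}\frac{a}{1}=\frac{c_1a}{u}$ (choosing the trivial Ore datum in the product formula) and that equal fractions over a common denominator have equal numerators over a domain. Both reduce to the cancellation property recorded in the setup, so no genuine difficulty remains. The right-hand version is obtained by the symmetric argument, replacing left fractions, left orders and $Q_l$ throughout by their right analogues.
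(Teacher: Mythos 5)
Your proof is correct, and it diverges from the paper's in an interesting way on the forward implication. For the converse the two arguments are essentially identical: you apply the left Ore condition of $S^{-1}B$ to $\tfrac{a}{1}$ and $\tfrac{s_0}{1}$ and clear denominators to land back in $B$; you route the clearing through Proposition~\ref{2.4.2a} and a cancellation lemma for equal fractions over a common denominator, while the paper uses the equality criterion for fractions directly ($c'ca=d'du$, $c't=d's\in S$), but these are the same computation. The forward implication is where you genuinely differ. The paper first checks by hand that $S^{-1}B$ is a domain, then verifies the left Ore condition inside $S^{-1}B$ by an explicit fraction identity ($\tfrac{ps}{1}\tfrac{a}{s}=\tfrac{pa}{1}=\tfrac{qb}{1}=\tfrac{qt}{1}\tfrac{b}{t}$), and only afterwards, in a separate step, builds the map $\varphi:S^{-1}B\to Q_l(B)$ and checks the axioms of a left total ring of fractions to get the isomorphism. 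You instead embed $S^{-1}B$ into $Q_l(B)$ via the universal property of the localization and observe that $S^{-1}B$ is then a left order in the division ring $Q_l(B)$, so the characterization of left Ore domains as left orders in division rings (together with uniqueness of the classical left quotient ring) delivers the Ore property, the domain property, and the isomorphism $Q_l(S^{-1}B)\cong Q_l(B)$ all at once. Your route is shorter and more conceptual but leans on two standard black boxes (the universal property and the order characterization, both in \cite{McConnell}); the paper's is longer but entirely self-contained within the fraction calculus it sets up at the start of the section. Both are complete proofs, including the right-sided versions by symmetry.
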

\begin{proof}
(i) $\Rightarrow)$: Note first that $S^{-1}B$ is a domain: let $\frac{a}{s},\frac{b}{t}\in S^{-1}B$
such that $\frac{a}{s}\frac{b}{t}=\frac{0}{1}$. There exist $u\in S$ and $c\in B$ such that $ua=ct$
and $\frac{cb}{us}=\frac{0}{1}$. Hence, there exist $c',d'\in B$ such that $c'cb=0$ and
$c'us=d'1\in S$. Since $B$ is a domain $cb=0$, then $b=0$ or $c=0$, and in this last case we get
that $a=0$. Thus, $\frac{a}{s}=0$ or $\frac{b}{t}=0$.

Let again $\frac{a}{s},\frac{b}{t}\in S^{-1}B$ with $\frac{b}{t}\neq 0$, then $b\neq 0$ and there
exist $p\neq 0$ and $q$ in $B$ such that $pa=qb$. Then,
$\frac{ps}{1}\frac{a}{s}=\frac{pa}{1}=\frac{qb}{1}=\frac{qt}{1}\frac{b}{t}$, with $\frac{ps}{1}\neq
0$.

$\Leftarrow)$: Let $a,u\in B$, $u\neq 0$, then $\frac{a}{1},\frac{u}{1}\in S^{-1}B$, with
$\frac{u}{1}\neq 0$. There exist $\frac{c}{t},\frac{d}{s}\in S^{-1}A$, with $\frac{c}{t}\neq 0$
such that $\frac{c}{t}\frac{a}{1}=\frac{d}{s}\frac{u}{1}$, i.e., $\frac{ca}{t}=\frac{du}{s}$. There
exist $c',d'\in B$ such that $c'ca=d'du$ and $c't=d's\in S$. Note that $c'c\neq 0$ since $c'\neq 0$
and $c\neq 0$.

(ii) The function
\begin{align*}
\varphi:S^{-1}B&\to Q_l(B)\\
\frac{b}{s}&\mapsto \frac{b}{s}
\end{align*}
verify the conditions that define a left total ring of fractions, i.e., $\varphi$ is an injective
ring homomorphism, the non-zero elements of $S^{-1}B$ are invertible in $Q_l(B)$  and each element
$\frac{b}{u}$ of $Q_l(B)$ can be written as
$\frac{b}{u}=\varphi(\frac{u}{1})^{-1}\varphi(\frac{b}{1})$.
\end{proof}

\begin{proposition}\label{Ore's theorem}
If $R$ is a left Ore domain and $\sigma$ is injective, then $R[x;\sigma,\delta]$ is a left Ore
domain and
\begin{equation}\label{ecu2.6.7}
Q_l(R[x;\sigma,\delta])\cong Q_l(Q_l(R)[x;\overline{\sigma},\overline{\delta}]),
\end{equation}
If $R$ is a right Ore domain and $\sigma$ is bijective, then $R[x;\sigma,\delta]$ is a right Ore
domain and
\begin{equation}
Q_d(R[x;\sigma,\delta])\cong Q_d(Q_d(R)[x;\widetilde{\sigma},\widetilde{\delta}]).
\end{equation}
\end{proposition}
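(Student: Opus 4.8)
The plan is to reduce both assertions to the already-established localization machinery, so that the skew-polynomial structure never has to be analysed directly at the level of fractions. Consider the left-hand statement first. Since $R$ is a left Ore domain, $S:=R-\{0\}=S_0(R)$ is a multiplicative set and $Q_l(R)=S^{-1}R$ is a division ring. The hypothesis $\sigma$ injective forces $\sigma(S)\subseteq S$ (a nonzero element maps to a nonzero element in a domain), so Lemma \ref{2.6.3}(a) applies and gives
\begin{equation*}
S^{-1}(R[x;\sigma,\delta])\cong (S^{-1}R)[x;\overline{\sigma},\overline{\delta}]=Q_l(R)[x;\overline{\sigma},\overline{\delta}].
\end{equation*}
The ring on the right is a skew polynomial ring over a division ring, hence a (left and right) Noetherian domain, and in particular a left Ore domain with a left total division ring of fractions $Q_l(Q_l(R)[x;\overline{\sigma},\overline{\delta}])$.

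The bridge back to $R[x;\sigma,\delta]$ is Proposition \ref{2.2.3}. Set $B:=R[x;\sigma,\delta]$, which is a domain by the classical theory (or by Proposition \ref{1.1.10a}). The set $S$ is a multiplicative subset of $B$, and $S^{-1}B$ exists and equals the left Ore domain identified above. By Proposition \ref{2.2.3}, $B$ is itself a left Ore domain, and moreover
\begin{equation*}
Q_l(R[x;\sigma,\delta])=Q_l(B)\cong Q_l(S^{-1}B)\cong Q_l\bigl(Q_l(R)[x;\overline{\sigma},\overline{\delta}]\bigr),
\end{equation*}
which is exactly \eqref{ecu2.6.7}. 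This completes the left-hand case, and it illustrates the whole strategy: localize at $R-\{0\}$ to pass to a division ring of coefficients, invoke Noetherianity there, then descend via Proposition \ref{2.2.3}.

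For the right-hand statement the argument is formally the mirror image, using $RS^{-1}$, the map $\widetilde{\sigma},\widetilde{\delta}$, and part (b) of each cited result, together with the right-side versions of Propositions \ref{2.2.3} and \ref{1.3.1a}. The one place requiring care is the hypothesis: Lemma \ref{2.6.3}(b) demands $\sigma$ bijective with $\sigma(S)=S$, which is why the right-hand case assumes $\sigma$ bijective rather than merely injective. With $S=R-\{0\}$ and $\sigma$ bijective on the domain $R$, one checks $\sigma(S)=S$ immediately, so Lemma \ref{2.6.3}(b) yields $(R[x;\sigma,\delta])S^{-1}\cong (RS^{-1})[x;\widetilde{\sigma},\widetilde{\delta}]=Q_d(R)[x;\widetilde{\sigma},\widetilde{\delta}]$, a Noetherian domain, and the right-side Proposition \ref{2.2.3} then delivers both that $R[x;\sigma,\delta]$ is a right Ore domain and the isomorphism of right division rings. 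I expect no genuine obstacle beyond bookkeeping; the only subtlety is remembering that the asymmetry in the two hypotheses ($\sigma$ injective vs.\ bijective) is exactly what is needed to invoke the two halves of Lemma \ref{2.6.3}, and that the final isomorphism is obtained by composing the localization isomorphism of Lemma \ref{2.6.3} with the fraction-field isomorphism of Proposition \ref{2.2.3}.
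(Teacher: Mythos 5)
Your proposal is correct and follows essentially the same route as the paper: localize at $S:=R-\{0\}$ via Lemma \ref{2.6.3}, observe that the resulting skew polynomial ring over the division ring $Q_l(R)$ is a left Noetherian domain and hence left Ore, and descend through Proposition \ref{2.2.3}. The only cosmetic difference is in the right-hand case, where the paper first deduces the right Ore property from Proposition \ref{1.3.1a} together with the classical right-handed Ore theorem before invoking Lemma \ref{2.6.3}(b), whereas you run the same localize-and-descend argument mirrored on the right; both are fine.
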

\begin{proof}
The conditions in (a) of Lemma \ref{2.6.3} are trivially satisfied for $S:=R-\{0\}$. Thus,
$Q_l(R)[x;\overline{\sigma},\overline{\delta}]$ is a well-defined skew polynomial ring over the
division ring $Q_l(R)$ and we have the isomorphism $S^{-1}(R[x;\sigma,\delta])\cong
Q_l(R)[x;\overline{\sigma},\overline{\delta}]$. Note that $\overline{\sigma}$ is injective, and
hence $Q_l(R)[x;\overline{\sigma},\overline{\delta}]$ is a left Noetherian domain and therefore a
left Ore domain. From this we get that $S^{-1}(R[x;\sigma,\delta])$ is a left Ore domain. From
Proposition \ref{2.2.3}, $R[x;\sigma,\delta]$ is a left Ore domain and
$Q_l(R[x;\sigma,\delta])\cong Q_l(S^{-1}(R[x;\sigma,\delta]))\cong Q_l(Q_l(R)[x;
\overline{\sigma},\overline{\delta}])$. This proves(\ref{ecu2.6.7}).

For the second statement note that if $R$ is a right Ore domain, then the right skew polynomial
ring is a right Ore domain. Therefore, Proposition \ref{1.3.1a} guarantees that if $R$ is a right
Ore domain, then $R[x;\sigma,\delta]$ is a right Ore domain, and from (\ref{ecu2.6.1c}) of Lemma
\ref{2.6.3} we get $Q_d(R[x;\sigma,\delta])\cong
Q_d(Q_d(R)[x;\widetilde{\sigma},\widetilde{\delta}])$.
\end{proof}

\begin{corollary}\label{2.1.8}
Let $R$ be a left Ore domain and $A:=R[x_1;\sigma_1,\delta_1]\cdots [x_n;\sigma_n,\delta_n]$, with
$\sigma_i$ injective for every $1\leq i\leq n$. Then, $A$ is a left Ore domain and
\begin{equation*}
Q_l(A)\cong Q_l(Q_l(R)[x_1;\overline{\sigma_1},\overline{\delta_1}]\cdots
[x_n;\overline{\sigma_n},\overline{\delta_n}]),
\end{equation*}
If $R$ is a right Ore domain and $\sigma_i$ is bijective for every $1\leq i\leq n$, then $A$ is a
right Ore domain and
\begin{equation*}
Q_d(A)\cong Q_d(Q_d(R)[x_1;\widetilde{\sigma},\widetilde{\delta_1}],\cdots,
[x_n;\widetilde{\sigma},\widetilde{\delta_n}]).
\end{equation*}
\end{corollary}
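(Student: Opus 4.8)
The plan is to proceed by induction on the number $n$ of variables, using the single–variable Ore's theorem (Proposition~\ref{Ore's theorem}) both as the base case and as the engine of the inductive step. For $n=1$ the statement is exactly Proposition~\ref{Ore's theorem}. For the inductive step I would set
\[
B:=R[x_1;\sigma_1,\delta_1]\cdots[x_{n-1};\sigma_{n-1},\delta_{n-1}],
\]
so that $A=B[x_n;\sigma_n,\delta_n]$, and regard $\sigma_n$ as an injective endomorphism of $B$ and $\delta_n$ as a $\sigma_n$-derivation of $B$. By the induction hypothesis $B$ is a left Ore domain and
\[
Q_l(B)\cong Q_l(C),\qquad C:=Q_l(R)[x_1;\overline{\sigma_1},\overline{\delta_1}]\cdots[x_{n-1};\overline{\sigma_{n-1}},\overline{\delta_{n-1}}].
\]

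Next I would apply Proposition~\ref{Ore's theorem} with $R$ replaced by the left Ore domain $B$: since $\sigma_n$ is injective, $A=B[x_n;\sigma_n,\delta_n]$ is a left Ore domain and
\[
Q_l(A)\cong Q_l\bigl(Q_l(B)[x_n;\overline{\sigma_n},\overline{\delta_n}]\bigr).
\]
It then remains to identify the right-hand side with $Q_l\bigl(C[x_n;\overline{\sigma_n},\overline{\delta_n}]\bigr)$, which is precisely the claimed expression $Q_l\bigl(Q_l(R)[x_1;\overline{\sigma_1},\overline{\delta_1}]\cdots[x_n;\overline{\sigma_n},\overline{\delta_n}]\bigr)$. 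To do this I would localize $C[x_n;\overline{\sigma_n},\overline{\delta_n}]$ at $S:=C-\{0\}$; because $\overline{\sigma_n}$ is injective on $C$ we have $\overline{\sigma_n}(S)\subseteq S$, so Lemma~\ref{2.6.3}(a) yields $S^{-1}\bigl(C[x_n;\overline{\sigma_n},\overline{\delta_n}]\bigr)\cong Q_l(C)[x_n;\overline{\sigma_n},\overline{\delta_n}]$, and Proposition~\ref{2.2.3} then gives $Q_l\bigl(C[x_n;\overline{\sigma_n},\overline{\delta_n}]\bigr)\cong Q_l\bigl(Q_l(C)[x_n;\overline{\sigma_n},\overline{\delta_n}]\bigr)$. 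Combining this with $Q_l(C)\cong Q_l(B)$ closes the induction.

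The main obstacle I anticipate is the bookkeeping in this last identification: one must check that the isomorphism $Q_l(B)\cong Q_l(C)$ is compatible with the extensions by $x_n$, i.e.\ that the induced data $\overline{\sigma_n},\overline{\delta_n}$ on $Q_l(B)$ correspond, under this isomorphism, to those on $Q_l(C)$. This is a routine but careful verification, since both sets of induced maps are ultimately built from the single pair $(\sigma_n,\delta_n)$ via the formulas of Lemma~\ref{2.6.3}(a); the transitivity of localization packaged in Proposition~\ref{2.2.3} is exactly what makes the nested division rings of fractions agree.

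For the right-side statement I would run the symmetric induction, invoking the right-hand version of Proposition~\ref{Ore's theorem} in the inductive step. Here bijectivity of each $\sigma_i$ is essential rather than mere injectivity, since passing to the right skew polynomial ring (Proposition~\ref{1.3.1a}) and localizing on the right (Lemma~\ref{2.6.3}(b)) both require $\sigma_i$ to be an automorphism; with this hypothesis the argument is otherwise identical, producing that $A$ is a right Ore domain with $Q_d(A)\cong Q_d\bigl(Q_d(R)[x_1;\widetilde{\sigma_1},\widetilde{\delta_1}]\cdots[x_n;\widetilde{\sigma_n},\widetilde{\delta_n}]\bigr)$.
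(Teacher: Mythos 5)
Your proposal is correct and takes essentially the same approach as the paper, whose entire proof is the single sentence ``The result follows from Proposition \ref{Ore's theorem} by iteration''; your induction is exactly that iteration written out. The compatibility bookkeeping you flag (matching the induced $\overline{\sigma_n},\overline{\delta_n}$ across the isomorphism $Q_l(B)\cong Q_l(C)$) can be bypassed by localizing the full iterated ring at $S:=R-\{0\}$ in one step via Corollary \ref{2.1.4}(a) and then invoking Proposition \ref{2.2.3} once.
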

\begin{proof}
The result follows from Proposition \ref{Ore's theorem} by iteration.
\end{proof}

\begin{theorem}[Ore's theorem: quasi-commutative case]\label{2.1.9}
Let $R$ be a left Ore domain and $A:=R[x_1;\sigma_1]\cdots [x_{n};\sigma_n]$ be a quasi-commutative
skew $PBW$ extension of $R$. Then $A$ is a left Ore domain, and hence, $A$ has left total division
ring of fractions such that
\begin{center}
$Q_l(A)\cong Q_l(Q_l(R)[x_1;\overline{\sigma_1}]\cdots [x_n;\overline{\sigma_n}])$.
\end{center}
If $R$ is a right Ore domain and $\sigma_i$ is bijective for every $1\leq i\leq n$, then $A$ is a
right Ore domain and
\begin{equation*}
Q_d(A)\cong Q_d(Q_d(R)[x_1;\widetilde{\sigma}],\cdots, [x_n;\widetilde{\sigma}]).
\end{equation*}
\end{theorem}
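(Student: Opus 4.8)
The plan is to reduce everything to the iterated skew polynomial case already settled in Corollary \ref{2.1.8}. By Theorem \ref{1.3.3}(i) a quasi-commutative skew $PBW$ extension is precisely an iterated skew polynomial ring of endomorphism type, so I may regard $A=R[x_1;\sigma_1]\cdots[x_n;\sigma_n]$ as such a ring with all derivations equal to zero. The only hypothesis of Corollary \ref{2.1.8} that still has to be checked is that each $\sigma_i$ is injective as an endomorphism of the intermediate ring $R[x_1;\sigma_1]\cdots[x_{i-1};\sigma_{i-1}]$.

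First I would establish this injectivity. Since $R$ is a domain, Proposition \ref{1.1.10a} shows that $A$ is a domain, and consequently every intermediate ring $R_{i-1}:=R[x_1;\sigma_1]\cdots[x_{i-1};\sigma_{i-1}]$ is a domain. In a skew polynomial ring $R_{i-1}[x_i;\sigma_i]$ of endomorphism type, if $\sigma_i(a)=0$ for some $a\neq 0$ then $x_i a=\sigma_i(a)x_i=0$ with $x_i\neq 0$, contradicting that $A$ has no zero divisors; hence each $\sigma_i$ is injective. (Alternatively, the injectivity of the $\sigma_i$ is immediate from Proposition \ref{sigmadefinition}.)

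With injectivity in hand, the left statement follows at once by applying part (a) of Corollary \ref{2.1.8} with $\delta_i=0$ for every $i$: $A$ is a left Ore domain and $Q_l(A)\cong Q_l(Q_l(R)[x_1;\overline{\sigma_1}]\cdots[x_n;\overline{\sigma_n}])$, where the induced derivations $\overline{\delta_i}$ vanish because the $\delta_i$ do. For the right-hand statement, the extra hypothesis that each $\sigma_i$ is bijective is exactly what is required to invoke part (b) of Corollary \ref{2.1.8} (and it is in any case guaranteed by Theorem \ref{1.3.3}(ii) when $A$ is bijective); this yields that $A$ is a right Ore domain together with the stated isomorphism $Q_d(A)\cong Q_d(Q_d(R)[x_1;\widetilde{\sigma}]\cdots[x_n;\widetilde{\sigma}])$.

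The argument is essentially a translation of Corollary \ref{2.1.8} through Theorem \ref{1.3.3}, so I do not expect a serious analytic obstacle; the only points requiring care are the bookkeeping that identifies the endomorphisms produced by Theorem \ref{1.3.3} with those to which Corollary \ref{2.1.8} is applied, and the observation that the endomorphism-type condition forces every induced derivation to be zero, so that the localized rings $Q_l(R)[x_1;\overline{\sigma_1}]\cdots[x_n;\overline{\sigma_n}]$ remain of endomorphism type and the displayed formulas specialize correctly.
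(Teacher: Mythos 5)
Your proposal is correct and follows essentially the same route as the paper: the paper's proof is a one-line reduction to Corollary \ref{2.1.8} via Theorem \ref{1.3.3}, citing Proposition \ref{sigmadefinition} for injectivity of the endomorphisms. Your additional check that the $\sigma_i$ are injective as endomorphisms of the \emph{intermediate} rings (via the domain argument from Proposition \ref{1.1.10a}) is a worthwhile clarification of a point the paper glosses over, but it does not change the approach.
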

\begin{proof}
This follows from Corollary \ref{2.1.8} since for any skew $PBW$ extension the endomorphisms
$\sigma$'s are always injective, see Proposition \ref{sigmadefinition}.
\end{proof}

Now we consider the previous theorem for bijective extensions, extending this way Proposition
\ref{1.5.1} to left (right) Ore domains.

\begin{theorem}[Ore's theorem: bijective case]\label{Orespbw}
Let $A=\sigma(R)\langle x_1,\dots, x_n\rangle$ be a bijective skew $PBW$ extension of a left Ore
domain $R$. Then $A$ is also a left Ore domain, and hence, $A$ has left total division ring of
fractions such that
\begin{center}
$Q_l(A)\cong Q_l(\sigma(Q_l(R))\langle x_1',\dots, x_n'\rangle)$.
\end{center}
If $R$ is a right Ore domain, then $A$ is also a right Ore domain, and hence, $A$ has right total
division ring of fractions such that
\begin{center}
$Q_d(A)\cong Q_d(\sigma(Q_d(R))\langle x_1'',\dots, x_n''\rangle)$.
\end{center}
\begin{proof}
With $S:=R-\{0\}$ in Lemma \ref{2.1.6}, $S^{-1}A=\sigma(Q_l(R))\langle x_1',\dots, x_n'\rangle$ is
a left Ore domain. In fact, we have that $Q_l(R)$ is a division ring, so from Theorem \ref{1.3.4}
and Proposition \ref{1.1.10a} we obtain that $\sigma(Q_l(R))\langle x_1',\dots, x_n'\rangle$ is a
left Noetherian domain, and hence, a left Ore domain. From Proposition \ref{2.2.3} we get that $A$
is a left Ore domain and $Q_l(A)\cong Q_l(S^{-1}A)\cong Q_l(\sigma(Q_l(R))\langle x_1',\dots,
x_n'\rangle)$. The proof for the right side is analogous.
\end{proof}
\end{theorem}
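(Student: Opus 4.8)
The plan is to reduce the statement to the Noetherian situation already handled in Proposition \ref{1.5.1}, by localizing the ring of coefficients at all of its nonzero elements. Since $R$ is a domain, the set $S:=R-\{0\}$ is a multiplicative subset consisting of regular elements, so $S\subseteq S_0(R)$. Because $A$ is bijective, each $\sigma_i$ is a bijective ring endomorphism, and as $\sigma_i(0)=0$ this forces $\sigma_i(R-\{0\})=R-\{0\}$, i.e. $\sigma_i(S)=S$, which is exactly the invariance hypothesis needed to invoke Lemma \ref{2.1.6}. Finally, since $R$ is a left Ore domain, the left ring of fractions $S^{-1}R=Q_l(R)$ exists and is a division ring.

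With these checks in place, Lemma \ref{2.1.6}(a) tells us that $S^{-1}A$ exists and is precisely the bijective skew $PBW$ extension $\sigma(Q_l(R))\langle x_1',\dots,x_n'\rangle$ over $Q_l(R)$. The whole point of enlarging the coefficients to $Q_l(R)$ is that a division ring is in particular a left Noetherian domain: Proposition \ref{1.1.10a} then gives that $S^{-1}A$ is a domain, and Theorem \ref{1.3.4} (Hilbert Basis) gives that $S^{-1}A$ is left Noetherian. A left Noetherian domain is automatically a left Ore domain, so $S^{-1}A$ is a left Ore domain.

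It then remains to descend the Ore property from $S^{-1}A$ back to $A$. By Proposition \ref{1.1.10a}, $A$ itself is a domain (as $R$ is), so Proposition \ref{2.2.3} applies with $B:=A$ and the multiplicative set $S=R-\{0\}$: since $S^{-1}A$ is a left Ore domain, so is $A$, and moreover $Q_l(A)\cong Q_l(S^{-1}A)=Q_l(\sigma(Q_l(R))\langle x_1',\dots,x_n'\rangle)$, which is the asserted isomorphism. The right-sided statement is obtained by the same argument, using Lemma \ref{2.1.6}(b) in place of \ref{2.1.6}(a) and the right-hand versions of Proposition \ref{1.1.10a}, Theorem \ref{1.3.4}, and Proposition \ref{2.2.3}.

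I do not expect a genuine obstacle here: the real content has already been absorbed into the preliminary lemmas. The only points requiring care are the interface checks, namely that the hypotheses of Lemma \ref{2.1.6} are met (regularity of $S$ and the $\sigma_i$-invariance $\sigma_i(S)=S$, for which bijectivity of $A$ is essential) and that $S^{-1}A$ is correctly identified as a \emph{bijective} skew $PBW$ extension over a division ring, so that the Hilbert Basis Theorem \ref{1.3.4} is legitimately applicable. Once that identification is secured, the remainder is the routine chaining ``Noetherian domain $\Rightarrow$ Ore domain'' together with the descent in Proposition \ref{2.2.3}, and no new estimate is needed.
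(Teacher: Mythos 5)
Your proposal is correct and follows essentially the same route as the paper: localize at $S=R-\{0\}$, identify $S^{-1}A$ via Lemma \ref{2.1.6} as a bijective extension of the division ring $Q_l(R)$, deduce it is a left Noetherian domain (hence left Ore) from Theorem \ref{1.3.4} and Proposition \ref{1.1.10a}, and descend to $A$ via Proposition \ref{2.2.3}. The only difference is that you make explicit the interface checks ($S\subseteq S_0(R)$ and $\sigma_i(S)=S$) that the paper leaves tacit, which is a welcome addition rather than a deviation.
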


\section{Goldie's theorem}\label{SectionArtinianquotientringskewPBW}

Now we pass to study the second classical theorem that we want to prove for the skew $PBW$
extensions. Goldie's theorem says that a ring $B$ has semisimple left (right) total rings of
fractions if and only if $B$ is semiprime and left (right) Goldie. In particular, $B$ has simple
left (right) Artinian left (right) total ring of fractions if and only if $B$ is prime and left
(right) Goldie (see \cite{Goodearl}). In this section we study this result for skew $PBW$
extensions.

The first remark for this problem is the following proposition.

\begin{proposition}\label{GoldiespbwL}
Let $R$ be a prime left $($right$)$ Noetherian ring and let $A$ be a bijective skew \textit{PBW}
extension of $R$. Then $A$ has left $($right$)$ total ring of fractions $Q_l(A)$ which is simple
and left $($right$)$ Artinian.
\begin{proof}
By Theorem \ref{1.3.4}, we know that $A$ is left $($right$)$ Noetherian and hence left $($right$)$
Goldie. Now, observe that $A$ is also a prime ring. In fact, it is well known that an skew
polynomial ring of automorphism type over a prime ring is prime (\cite{McConnell}, Theorem 1.2.9.),
hence, from Theorems \ref{1.3.3} and \ref{1.3.2} we conclude that $Gr(A)$ is a prime ring, whence,
$A$ is prime (see \cite{McConnell}, Proposition 1.6.6). The assertion of the proposition follows
from Goldie's theorem.
\end{proof}
\end{proposition}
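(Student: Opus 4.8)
The plan is to reduce the assertion about the bijective skew $PBW$ extension $A$ to the already-established quasi-commutative case by using the filtration machinery from Theorem \ref{1.3.2}, and then to invoke Goldie's theorem. First I would observe that by the Hilbert Basis Theorem (Theorem \ref{1.3.4}), since $R$ is left (right) Noetherian and $A$ is bijective, $A$ is itself left (right) Noetherian; a Noetherian ring satisfies the ascending chain condition on annihilators and has finite uniform dimension, so $A$ is left (right) Goldie. This disposes of the Goldie hypothesis immediately and is not where the difficulty lies.

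The substantive point is to show that $A$ is a prime ring, given that $R$ is prime. Here I would pass to the associated graded ring: by Theorem \ref{1.3.2}, $Gr(A)$ is a quasi-commutative bijective skew $PBW$ extension of $R$, and by Theorem \ref{1.3.3}(i)--(ii) this $Gr(A)$ is isomorphic to an iterated skew polynomial ring of automorphism type $R[z_1;\theta_1]\cdots[z_n;\theta_n]$ with each $\theta_i$ bijective. The classical fact (\cite{McConnell}, Theorem 1.2.9) that a skew polynomial ring of automorphism type over a prime ring is again prime can then be applied $n$ times along this iterated tower, yielding that $Gr(A)$ is prime. Finally, the standard lifting principle for filtered rings (\cite{McConnell}, Proposition 1.6.6) transfers primeness from the graded ring back up: if $Gr(A)$ is prime then the filtered ring $A$ is prime.

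Having established that $A$ is prime and left (right) Goldie, the conclusion follows directly from Goldie's theorem in its prime form: a ring is prime and left (right) Goldie if and only if it possesses a simple left (right) Artinian classical left (right) total ring of fractions (see \cite{Goodearl}). Hence $Q_l(A)$ (respectively $Q_d(A)$) exists and is simple Artinian, which is exactly the assertion.

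The main obstacle I anticipate is the primeness transfer, and it is really two linked points: first that Theorem \ref{1.3.3} genuinely gives the automorphism (not merely endomorphism) type in the bijective case, so that the McConnell--Robson primeness result for skew polynomial rings applies at each stage of the iteration; and second that the filtration of Theorem \ref{1.3.2} is a well-behaved (exhaustive, separated, with $Gr(A)$ a domain-like prime ring) filtration so that \cite{McConnell}, Proposition 1.6.6 is applicable to lift primeness. Both are guaranteed by the bijectivity hypothesis via the cited theorems, so once those are in hand the argument is routine; the Goldie conclusion itself requires no further work beyond citing the classical theorem.
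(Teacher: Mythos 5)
Your proposal is correct and follows essentially the same route as the paper's own proof: Noetherianity (hence the Goldie condition) via Theorem \ref{1.3.4}, primeness of $A$ by passing to $Gr(A)$ through Theorems \ref{1.3.2} and \ref{1.3.3} and iterating the McConnell--Robson result for skew polynomial rings of automorphism type, lifting primeness back along the filtration, and finally invoking Goldie's theorem in its prime form. No substantive difference from the paper's argument.
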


Next we want to extend the previous proposition to the case when the ring $R$ of coefficients is
semiprime and left (right) Goldie. We will consider separately the quasi-commutative and bijective
cases. We start recalling the following recent result that motivated us to investigate Goldie's
theorem for skew $PBW$ extensions.

\begin{proposition}
Let $R$ be a semiprime left Goldie ring and let $\sigma$ be injective. Then, $R[x;\sigma,\delta]$
is semiprime left Goldie, and hence, $Q_l(R[x;\sigma,\delta])$ exists and it is semisimple. If $R$
is right Goldie and $\sigma$ is bijective, then $R[x;\sigma,\delta]$ is semiprime right Goldie, and
hence, $Q_r(R[x;\sigma,\delta])$ exists and it is semisimple.
\end{proposition}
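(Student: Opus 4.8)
The plan is to reduce to the case where the coefficient ring is semisimple Artinian by localizing at the regular elements, apply Goldie's theorem there, and then descend. Concretely, since $R$ is semiprime left Goldie, Goldie's theorem gives that $S:=S_0(R)$ is a left Ore set and $Q_l(R)=S^{-1}R$ is semisimple Artinian. The first point to settle is $\sigma(S)\subseteq S$: one must check that the injective endomorphism $\sigma$ carries regular elements to regular elements. I would deduce this from the Goldie characterization of regularity (an element is regular precisely when the left ideal it generates is essential, equivalently when it becomes a unit in $Q_l(R)$), so that $\sigma$ extends to an injective endomorphism $\overline{\sigma}$ of $Q_l(R)$. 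Granting $\sigma(S)\subseteq S$, Lemma \ref{2.6.3}(a) yields the isomorphism $S^{-1}(R[x;\sigma,\delta])\cong Q_l(R)[x;\overline{\sigma},\overline{\delta}]$, with $\overline{\sigma}$ still injective.

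Next I would prove that $B:=Q_l(R)[x;\overline{\sigma},\overline{\delta}]$ is semiprime and left Goldie. Because the coefficient ring is semisimple Artinian, the leading coefficient of any element of $B$ is either a unit or a zero divisor whose left annihilator is a direct summand, and I would exploit this through leading-term and degree arguments: first to show that no nonzero ideal of $B$ can be nilpotent (semiprimeness), and then to bound the left uniform dimension of $B$ and to verify the ascending chain condition on left annihilators (the Goldie conditions). By Goldie's theorem, $B$ then possesses a semisimple left total ring of fractions $Q_l(B)$.

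Finally I would descend from $B$ to $R[x;\sigma,\delta]$. The elements of $S$ remain regular in $R[x;\sigma,\delta]$ by a leading-coefficient computation (using $\sigma(S)\subseteq S$, so that $\sigma^{k}(s)$ is regular for every $k$), hence $R[x;\sigma,\delta]$ embeds in $B=S^{-1}(R[x;\sigma,\delta])$ as a localization at regular elements. An argument exactly parallel to Proposition \ref{2.2.3} then transfers the semiprime Goldie property downward and gives $Q_l(R[x;\sigma,\delta])\cong Q_l(B)$, which is semisimple. For the right-hand statement, the extra hypothesis that $\sigma$ be bijective lets me pass to the right skew polynomial ring via Proposition \ref{1.3.1a} and run the mirror argument using Lemma \ref{2.6.3}(b).

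The main obstacle I anticipate lies in the middle step: because $\sigma$ is only assumed injective, the induced $\overline{\sigma}$ need not be surjective even on the semisimple Artinian ring $Q_l(R)$ (an injective, non-surjective endomorphism of the base field already produces one on a matrix ring), so $B$ need not be Noetherian and the convenient Hilbert-basis route of Theorem \ref{1.3.4} is unavailable. One is therefore forced to establish the Goldie conditions for $B$ directly, via uniform-dimension and annihilator estimates phrased in terms of degrees and leading coefficients, and this is the technical heart of the proof.
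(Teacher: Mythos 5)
First, a point of reference: the paper does not actually prove this proposition; its entire ``proof'' is the citation of \cite{LeroyMatczuk2005}, Theorem 3.8, together with Proposition \ref{1.3.1a} for the right-hand statement. What you are attempting to reconstruct is therefore the content of that reference, and your overall plan --- localize at $S:=S_0(R)$, pass to $Q_l(R)[x;\overline{\sigma},\overline{\delta}]$ via Lemma \ref{2.6.3}(a), establish the Goldie conditions there, and descend in the spirit of Propositions \ref{2.3.4} and \ref{2.3.4a} --- is the natural strategy and broadly the one used in the literature. Your treatment of the right-hand case via Proposition \ref{1.3.1a} also matches the paper's one-line remark.

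However, the proposal has two genuine gaps, and they are precisely the two nontrivial ingredients. (1) The inclusion $\sigma(S_0(R))\subseteq S_0(R)$ is not established by the argument you sketch. The characterization ``$c$ is regular iff $c$ becomes a unit in $Q_l(R)$'' is circular here: you cannot apply it to $\sigma(c)$ until you know $\sigma$ extends to $Q_l(R)$, which is exactly what is at stake. The characterization via essentiality of $Rc$ does not transfer either, because a non-surjective $\sigma$ carries $Rc$ onto $\sigma(R)\sigma(c)$, which is essential only as a left ideal of the subring $\sigma(R)$, not of $R$. A correct proof requires a real argument --- for instance, showing that $P\mapsto\sigma^{-1}(P)$ permutes the finitely many minimal primes of the semiprime Goldie ring $R$ and reducing to the prime case, or quoting the corresponding lemma of \cite{LeroyMatczuk2005}. (2) The step you yourself call the ``technical heart'' --- that $Q_l(R)[x;\overline{\sigma},\overline{\delta}]$ is semiprime left Goldie when $\overline{\sigma}$ is injective but possibly non-surjective, so that the ring need not be left Noetherian and no Hilbert basis theorem is available --- is left entirely undone; the phrase ``leading-term and degree arguments'' is a promissory note, and carrying it out is the actual content of the cited theorem. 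You have correctly located the difficulties, which is worth something, but locating them is not resolving them, so what you have is a plan rather than a proof.
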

\begin{proof}
See \cite{LeroyMatczuk2005}, Theorem 3.8. For the second part we use also Proposition \ref{1.3.1a}.
\end{proof}

\begin{corollary}\label{2.3.2}
Let $R$ be a semiprime left Goldie ring and $\sigma_i$ injective for every $1\leq i\leq n$. Then,
$A:=R[x_1;\sigma_1,\delta_1]\cdots [x_n;\sigma_n,\delta_n]$ is semiprime left Goldie, and hence,
$Q_l(A)$ exists and it is semisimple. If $R$ is right Goldie and every $\sigma_i$ is bijective,
then $A$ is semiprime right Goldie, and hence, $Q_r(A)$ exists and it is semisimple.
\end{corollary}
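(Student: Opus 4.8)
The plan is to prove this corollary by a straightforward induction on $n$, using the preceding proposition (the Leroy--Matczuk result) as both the base case and the inductive engine. The key observation is that an iterated skew polynomial ring can be peeled one variable at a time: writing $A = B[x_n;\sigma_n,\delta_n]$ where $B := R[x_1;\sigma_1,\delta_1]\cdots[x_{n-1};\sigma_{n-1},\delta_{n-1}]$, if I can arrange that $B$ is itself semiprime left Goldie and that the endomorphism $\sigma_n$ (extended to $B$) remains injective, then one more application of the proposition gives that $A$ is semiprime left Goldie with semisimple $Q_l(A)$.

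First I would treat the base case $n=1$, which is precisely the previous proposition. For the inductive step I would assume the claim holds for the ring $B$ in $n-1$ variables, so that $B$ is semiprime left Goldie. The one subtlety is that $\sigma_n$ and $\delta_n$ are defined as maps on $R$, but to view $A$ as a skew polynomial extension $B[x_n;\sigma_n,\delta_n]$ I need $\sigma_n,\delta_n$ to extend to an endomorphism and a $\sigma_n$-derivation of the whole ring $B$. In the iterated skew polynomial construction these extensions are automatic by the defining commutation relations among the $x_i$, and injectivity of $\sigma_n$ on $B$ follows from its injectivity on $R$ together with injectivity of the lower $\sigma_i$'s (the leading-coefficient argument: $\sigma_n$ acts on a polynomial coefficientwise, so it kills a nonzero polynomial only if it kills its leading coefficient). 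With $B$ semiprime left Goldie and $\sigma_n$ injective, the proposition applies to $A = B[x_n;\sigma_n,\delta_n]$ and closes the induction.

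For the right-sided statement the argument is identical, except that I carry the hypothesis ``$\sigma_i$ bijective for every $i$'' through the induction. Bijectivity of $\sigma_n$ on $R$ must again be promoted to bijectivity on $B$; surjectivity onto $B$ follows because each coefficient of a target polynomial has a preimage under the (bijective) $\sigma_n$ on $R$, and the lower $\sigma_i$'s being bijective guarantees the monomial structure is preserved. Then the right-sided half of the previous proposition (which required $\sigma$ bijective) applies at each stage.

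The step I expect to be the main obstacle — though it is more a bookkeeping point than a genuine difficulty — is verifying cleanly that the \emph{extended} maps $\sigma_n,\delta_n$ on $B$ satisfy the hypotheses of the proposition, namely that $\sigma_n|_B$ is injective (respectively bijective) and that $\delta_n|_B$ is a genuine $\sigma_n$-derivation of $B$. Once these are checked, the corollary is a formal consequence of iterating the cited theorem. In fact the whole argument can be compressed into the single sentence that the result follows from the previous proposition by induction on $n$, which is essentially what the authors are likely to write.
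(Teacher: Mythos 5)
Your proposal is correct and coincides with the paper's proof, which simply states that the corollary is a ``direct consequence of the previous proposition by iteration.'' Your extra bookkeeping about extending $\sigma_n,\delta_n$ to $B:=R[x_1;\sigma_1,\delta_1]\cdots[x_{n-1};\sigma_{n-1},\delta_{n-1}]$ is harmless but unnecessary, since in the standard definition of an iterated skew polynomial ring each $\sigma_i$ is already an endomorphism of (and each $\delta_i$ a $\sigma_i$-derivation of) the preceding ring, so the injectivity/bijectivity hypotheses are hypotheses on those maps directly.
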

\begin{proof}
Direct consequence of the previous proposition by iteration.
\end{proof}

\begin{theorem}[Goldie's theorem: quasi-commutative case]\label{2.3.3}
Let $R$ be a semiprime left Goldie ring and $A$ a quasi-commutative skew $PBW$ extension of $R$.
Then, $A$ is semiprime left Goldie, and hence, $Q_l(A)$ exists and it is semisimple. If $R$ is
right Goldie and every $\sigma_i$ is bijective, then $A$ is semiprime right Goldie, and hence,
$Q_r(A)$ exists and it is semisimple.
\end{theorem}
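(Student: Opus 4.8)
The plan is to reduce everything to the iterated skew polynomial ring case already settled in Corollary \ref{2.3.2}, using the structural description of quasi-commutative skew $PBW$ extensions supplied by Theorem \ref{1.3.3}.

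First I would invoke Theorem \ref{1.3.3}(i) to obtain a ring isomorphism $A\cong R[z_1;\theta_1]\cdots[z_n;\theta_n]$, exhibiting $A$ (up to isomorphism) as an iterated skew polynomial ring of pure endomorphism type, that is, with all derivations zero. Since isomorphic rings are simultaneously semiprime and left (right) Goldie and have isomorphic total rings of fractions, it suffices to prove the assertion for this iterated skew polynomial ring and then transport the conclusion back along the isomorphism.

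Next I would verify that the hypotheses of Corollary \ref{2.3.2} are met. For the left case each endomorphism $\theta_i$ must be injective; this follows from Proposition \ref{sigmadefinition}, which guarantees that the endomorphisms $\sigma_i$ attached to any skew $PBW$ extension are injective, together with the way the $\theta_i$ are built from the $\sigma_i$ in Theorem \ref{1.3.3}. Granting injectivity of the $\theta_i$ and the hypothesis that $R$ is semiprime left Goldie, Corollary \ref{2.3.2} yields directly that $R[z_1;\theta_1]\cdots[z_n;\theta_n]$ is semiprime left Goldie and that its left total ring of fractions $Q_l$ exists and is semisimple, whence the same holds for $A$.

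For the right case, the extra hypothesis that every $\sigma_i$ is bijective places $A$ in the bijective quasi-commutative class, so Theorem \ref{1.3.3}(ii) ensures that each $\theta_i$ is bijective. Then the right-hand statement of Corollary \ref{2.3.2} applies to conclude that $A$ is semiprime right Goldie with semisimple right total ring of fractions. The only point requiring care, and the main (if modest) obstacle, is the bookkeeping that matches the injectivity and bijectivity of the $\theta_i$ with that of the $\sigma_i$; everything else is a direct transfer of Corollary \ref{2.3.2} through the isomorphism of Theorem \ref{1.3.3}.
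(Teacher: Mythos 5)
Your proposal is correct and follows essentially the same route as the paper, which also deduces the theorem from Theorem \ref{1.3.3} combined with Corollary \ref{2.3.2}. Your additional care about the injectivity and bijectivity of the $\theta_i$ is a reasonable elaboration of a point the paper leaves implicit.
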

\begin{proof}
This follows from Theorem \ref{1.3.3} and the previous corollary.
\end{proof}

Next we consider Goldie's theorem for bijective extensions. Some preliminaries are needed. Recall
that an element $x$ of a ring $B$ is \textit{left regular} if $rx=0$ implies that $r=0$ for $r\in
B$. We start considering rings for which the set of left regular elements coincides with the set of
regular elements. One remarkable example of this class of rings are the semiprime left Goldie rings
(see \cite{McConnell}, Proposition 2.3.4). Similar statements are true for the right side.

\begin{proposition}\label{2.3.4a}
Let $B$ be a ring and $S\subseteq S_0(B)$ a multiplicative system of $B$ such that $S^{-1}B$
exists. Suppose that any left regular element of $S^{-1}B$ is regular, then the same holds for $B$.
The right side version of the proposition is also true.
\end{proposition}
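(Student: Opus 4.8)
The plan is to transfer regularity between $B$ and $S^{-1}B$ through the canonical map $\iota\colon B\to S^{-1}B$, $b\mapsto \frac{b}{1}$. The first thing I would record is the single place where the hypothesis $S\subseteq S_0(B)$ is essential: because every element of $S$ is regular, $\iota$ is injective and, more generally, $\frac{a}{s}=0$ in $S^{-1}B$ if and only if $a=0$ in $B$. Indeed, $\frac{a}{1}=0$ forces $ca=0$ with $c\in S$ (taking common data in the definition of equality of fractions), and regularity of $c$ gives $a=0$; and for a general denominator one multiplies on the left by the unit $\frac{s}{1}$ (invertible in $S^{-1}B$ with inverse $\frac{1}{s}$) to reduce $\frac{a}{s}=0$ to $\frac{a}{1}=\frac{s}{1}\frac{a}{s}=0$. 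The converse implication is immediate.

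Now let $b\in B$ be left regular. Since $b$ is already left regular, proving it regular amounts to proving it right regular, so the whole content is the asymmetry. I would first check that $\frac{b}{1}$ is left regular in $S^{-1}B$. Suppose $\frac{a}{s}\frac{b}{1}=0$. Applying the product rule (Ore's condition to $a$ and $1$, which one may take with $u=1$, $c=a$) gives $\frac{a}{s}\frac{b}{1}=\frac{ab}{s}$, so the preliminary fact yields $ab=0$; left regularity of $b$ then forces $a=0$, hence $\frac{a}{s}=0$. Thus $\frac{b}{1}$ is left regular in $S^{-1}B$, and by hypothesis it is regular there, in particular right regular.

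It remains to descend right regularity. If $br=0$ for some $r\in B$, then $\frac{b}{1}\frac{r}{1}=\frac{br}{1}=0$ (again by the product rule), so right regularity of $\frac{b}{1}$ gives $\frac{r}{1}=0$, whence $r=0$ by injectivity of $\iota$. Therefore $b$ is right regular, and being also left regular it is regular, as desired. The right-hand version is proved symmetrically, replacing $S^{-1}B$ by $BS^{-1}$ and left fractions by right fractions throughout. The argument is essentially bookkeeping with left-Ore fractions; the only load-bearing point — and the thing to get exactly right — is the asymmetry just highlighted: one lifts \emph{left} regularity to $S^{-1}B$, invokes the hypothesis to upgrade it to \emph{full} regularity there, and then brings back precisely the \emph{right} regularity that $B$ was missing, every step resting on the injectivity statement that uses $S\subseteq S_0(B)$.
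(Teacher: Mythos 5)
Your proof is correct and follows essentially the same route as the paper: show that the canonical image of a left regular element of $B$ is left regular in $S^{-1}B$ (using that elements of $S$ are regular to identify $\frac{a}{s}=0$ with $a=0$), invoke the hypothesis to upgrade it to regularity there, and then pull back right regularity to $B$. The only difference is cosmetic bookkeeping in the choice of fraction representatives.
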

\begin{proof}
Let $a\in B$ be a left regular element, and let $b\in B$ such that $ab=0$. Then $\frac{a}{1}$ is a
left regular element of $S^{-1}B$. In fact, if $\frac{c}{u}\frac{a}{1}=0$, then $\frac{ca}{u}=0$,
i.e., $\frac{ca}{1}=0$, but since $S$ has not zero divisors, then $ca=0$. This implies that $c=0$,
i.e., $\frac{c}{u}=0$. Now, from $ab=0$ we get $\frac{a}{1}\frac{b}{1}=0$, and by the hypothesis,
$\frac{b}{1}=0$, i.e., $b=0$.
\end{proof}

\begin{proposition}\label{2.3.4}
Let $B$ be a ring such that any left regular element is regular. Let $S\subseteq S_0(B)$ a
multiplicative system of $B$ such that $S^{-1}B$ exists. Then,
\begin{enumerate}
\item[\rm (i)]$Q_l(B)$ exists if and only if $Q_l(S^{-1}B)$ exists. In such case,
\begin{center}
$Q_l(B)\cong Q_l(S^{-1}B)$.
\end{center}
\item[\rm (ii)]$B$ is semiprime left Goldie if and only if $S^{-1}B$ is semiprime left Goldie.
\end{enumerate}
The right side version of the proposition holds.
\end{proposition}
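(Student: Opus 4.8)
The plan is to reduce everything to part (i). Once (i) is available, part (ii) is immediate: by Goldie's theorem a ring is semiprime left Goldie if and only if its left total ring of fractions exists and is semisimple, and (i) asserts that $Q_l(B)$ exists precisely when $Q_l(S^{-1}B)$ does, with $Q_l(B)\cong Q_l(S^{-1}B)$ in that case. Since isomorphic rings are simultaneously semisimple, $B$ is semiprime left Goldie if and only if $S^{-1}B$ is. So I would concentrate on (i).

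Write $C:=S^{-1}B$ and let $\lambda\colon B\hookrightarrow C$, $b\mapsto\frac{b}{1}$, be the canonical embedding, which is injective because $S\subseteq S_0(B)$. The heart of the argument is a correspondence between the regular elements of $B$ and those of $C$, and this is exactly where the hypothesis that left regular elements of $B$ are regular is used. First I would show that $\frac{a}{t}\in C$ is left regular if and only if $a$ is left regular in $B$: the implication from $B$ to $C$ comes from pushing an annihilation relation $\frac{c}{u}\frac{a}{t}=0$ through the Ore condition for $S$ and using that elements of $S$ are units in $C$, while the converse follows by right-multiplying by the unit $\frac{t}{1}$ and invoking injectivity of $\lambda$. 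By hypothesis ``$a$ left regular'' coincides with ``$a$ regular.'' I would then upgrade this to: $\frac{a}{t}$ is regular in $C$ if and only if $a$ is regular in $B$. The forward implication is clear, and for the reverse I must verify right regularity of $\frac{a}{t}$: expanding a product $\frac{a}{t}\frac{c}{u}=0$ through the Ore identity $au^{-1}=\widehat{u}^{\,-1}\widehat{a}$ (so $\widehat{u}a=\widehat{a}u$ with $\widehat{u}\in S$) reduces matters to showing the auxiliary coefficient $\widehat{a}$ is regular; since $\widehat{a}u=\widehat{u}a$ is a product of regular elements it is left regular, whence $\widehat{a}$ is left regular, and the hypothesis promotes this to regular, finishing the reduction. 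This identifies $S_0(C)$ with the set of fractions $\frac{a}{t}$, $a\in S_0(B)$, $t\in S$.

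With the correspondence in hand, the two directions of (i) are routine. For the implication from $Q_l(B)$ to $Q_l(C)$: when $Q_l(B)$ exists, the map $\mu\colon C\to Q_l(B)$, $\frac{b}{t}\mapsto t^{-1}b$, is an injective ring homomorphism which inverts every regular element of $C$ (each maps to $t^{-1}a$ with $a,t$ regular in $B$, hence invertible), and every element $s^{-1}b$ of $Q_l(B)$ equals $\mu(\frac{s}{1})^{-1}\mu(\frac{b}{1})$ with $\frac{s}{1}$ regular in $C$; these are precisely the defining properties of the left total ring of fractions of $C$, so $Q_l(C)$ exists and $Q_l(C)\cong Q_l(B)$, exactly as in the verification closing Proposition \ref{2.2.3}. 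For the converse: I would establish the left Ore condition for $B$ with respect to $S_0(B)$ (the first localization condition being automatic, since $as=0$ with $s$ regular forces $a=0$). Given $a\in B$ and $s\in S_0(B)$, I apply the left Ore condition of $C$ to $\frac{a}{1}$ and the regular element $\frac{s}{1}\in S_0(C)$, clear the $S$-denominators using left Ore for $S$, and pull the resulting identity back to $B$ by injectivity of $\lambda$, obtaining $s'a=a's$ with $s':=w_1p\in S_0(B)$ a product of regular elements. Hence $Q_l(B)$ exists, and the isomorphism follows from the first direction.

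I expect the main obstacle to be the right-regularity half of the regular-element correspondence: it is the single point where one-sided regularity must be converted into two-sided regularity, and it is exactly what forces the standing hypothesis on $B$ (the analogous property for $S^{-1}B$ having been arranged in Proposition \ref{2.3.4a}). The remaining bookkeeping---injectivity of the localization maps, stability of regularity under products, and the vacuity of the first Ore condition for sets of regular elements---is straightforward. The right-hand versions of (i) and (ii) are obtained by the symmetric argument, replacing left by right throughout and invoking the right-side statements already recorded.
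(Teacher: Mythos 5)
Your proposal is correct and follows essentially the same route as the paper: the two halves of your regular-element correspondence ($b\in S_0(B)$ for $\frac{b}{t}\in S_0(S^{-1}B)$, and $\frac{u}{1}\in S_0(S^{-1}B)$ for $u\in S_0(B)$, with the hypothesis used exactly to promote the auxiliary Ore coefficient from left regular to regular) are both proved in the paper's argument, the Ore conditions are transferred in the same way, and the isomorphism is realized by the same canonical map $S^{-1}B\to Q_l(B)$. The only difference is organizational---you isolate the correspondence as a preliminary step rather than interleaving it---so no gap to report.
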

\begin{proof}
(i) $\Rightarrow)$: Let $\frac{a}{s}\in S^{-1}B$ and $\frac{b}{t}\in S_0(S^{-1}B)$. Note that $b\in
S_0(B)$. In fact, if $bc=0$ for some $c\in B$, then $\frac{b}{t}\frac{c}{1}=0$ and hence
$\frac{c}{1}=0$. Since $S$ has not zero divisors, then $c=0$. On the other hand, if $db=0$ for some
$d\in B$, then $\frac{dt}{1}\frac{b}{t}=\frac{db}{1}=0$. This implies that $\frac{dt}{1}=0$, and
hence, $d=0$.

By the hypothesis, there exist $z\in S_0(B)$ and $z'\in B$ such that $za=z'b$. From this we obtain
$\frac{zs}{1}\frac{a}{s}=\frac{z't}{1}\frac{b}{t}$, but observe that $zs\in S_0(B)$ and hence,
$\frac{zs}{1}\in S_0(S^{-1}B)$. In fact, we will show that if $u\in S_0(B)$, then $\frac{u}{1}\in
S_0(S^{-1}B)$. Let $\frac{p}{v}\in S^{-1}B$ such that $\frac{p}{v}\frac{u}{1}=0$, then
$\frac{pu}{v}=0$, so $\frac{pu}{1}=0$ and hence $p=0$, i.e., $\frac{p}{v}=0$. Now, let
$\frac{q}{w}\in S^{-1}B$ such that $\frac{u}{1}\frac{q}{w}=0$. There exist $v\in S$ and $x\in B$
such that $vu=xw$ and $\frac{xq}{u}=0$, i.e., $xq=0$. Note that $x$ is left regular since $vu$ is
regular, then by the hypothesis $x$ is regular, and hence, $q=0$, i.e., $\frac{q}{w}=0$.

This proves that $Q_l(S^{-1}B)$ exists.

$\Leftarrow)$: Let $a\in B$ and $u\in S_0(B)$, then $\frac{a}{1}, \frac{u}{1}\in S^{-1}B$ and, as
above, $\frac{u}{1}\in S_0(S^{-1}B)$. By the hypothesis, there exist $\frac{z}{s},\frac{z'}{s'}\in
S^{-1}B$ with $\frac{z'}{s'}\in S_0(S^{-1}B)$ such that
$\frac{z'}{s'}\frac{a}{1}=\frac{z}{s}\frac{u}{1}$, i.e., $\frac{z'a}{s'}=\frac{zu}{s}$, so there
exist $c,d\in B$ such that $cz'a=dzu$ and $cs'=ds\in S$. In order to complete the proof of the left
Ore condition we have to show that $cz'\in S_0(B)$. If $xcz'=0$ for some $x\in B $, then
$\frac{xc}{1}\frac{z'}{1}=\frac{0}{1}$, i.e.,
$\frac{xc}{1}\frac{s'}{1}\frac{1}{s'}\frac{z'}{1}=\frac{0}{1}$, so
$\frac{xcs'}{1}\frac{z'}{s'}=\frac{0}{1}$, and hence $\frac{xcs'}{1}=\frac{0}{1}$. This means
$xcs'=0$, so $x=0$. Now, if $cz'p=0$ for some $p\in B$, then
$\frac{c}{1}\frac{s'}{1}\frac{1}{s'}\frac{z'}{1}\frac{p}{1}=0=\frac{cs'}{1}\frac{z'}{s'}\frac{p}{1}$,
but since $cs'\in S$ we get that $\frac{cs'}{1}\in S_0(S^{-1}B)$, and hence
$\frac{z'}{s'}\frac{p}{1}=0$, and from this we obtain $\frac{p}{1}=0$, i.e., $p=0$. This proves
that $Q_l(B)$ exists.

The function
\begin{align*}
\varphi:S^{-1}B&\to Q_l(B)\\
\frac{b}{s}&\mapsto \frac{b}{s}
\end{align*}
verify the four conditions that define a left total ring of fractions, i.e., (a) $\varphi$ is a
ring homomorphism. (b) $S_0(S^{-1}B)\subseteq Q_l(B)^*$: in fact, let $\frac{b}{t}\in
S_0(S^{-1}B)$, then as we observed at the beginning of the proof, $b\in S_0(B)$, and hence,
$\varphi(\frac{b}{t})=\frac{b}{t}$ is invertible in $Q_l(B)$ with inverse $\frac{t}{b}$. (c)
$\frac{b}{s}\in \ker(\varphi)$ if and only if $\frac{d}{1}\frac{b}{s}=0$ with $\frac{d}{1}\in
S_0(S^{-1}B)$: in fact, if $\frac{b}{s}\in \ker(\varphi)$, then there exist $c,d\in B$ such that
$cb=0$ and $cs=d$, with $d\in S_0(B)$, but this means that $\frac{d}{1}\frac{b}{s}=0$, with
$\frac{d}{1}\in S_0(S^{-1}B)$. The converse is trivial. (d) each element $\frac{b}{u}$ of $Q_l(B)$
can be written as $\frac{b}{u}=\varphi(\frac{u}{1})^{-1}\varphi(\frac{b}{1})$.

(ii) This numeral is a direct consequence of (i) and the Goldie's theorem.
\end{proof}

\begin{proposition}\label{2.3.5}
Let $B$ be a positive filtered ring. If $Gr(B)$ is semiprime, then $B$ is semiprime.
\end{proposition}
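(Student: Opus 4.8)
The plan is to prove the contrapositive by transferring a ``square-zero witness'' from $B$ up to $Gr(B)$ via the principal-symbol map attached to the filtration. Recall that a ring is semiprime precisely when $aBa=0$ forces $a=0$ (equivalently, it has no nonzero ideal of square zero). So I would assume $B$ is \emph{not} semiprime and fix $a\in B$ with $a\neq 0$ and $aBa=0$, aiming to contradict the semiprimeness of $Gr(B)$.

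Since the filtration $\{F_m\}_{m\ge 0}$ is positive and exhausts $B$, every nonzero $b\in B$ lies in some $F_m$; let $d(b)$ be the least such $m$ and set $\overline{b}:=b+F_{d(b)-1}$, the nonzero homogeneous component it determines in $Gr(B)=\bigoplus_{m\ge 0}F_m/F_{m-1}$ (with $F_{-1}:=0$). In particular $\overline{a}\neq 0$. The key computation is the behaviour of symbols under multiplication: for a homogeneous $\overline{r}=r+F_{k-1}$ with $r\in F_k$, the product in $Gr(B)$ is, by the very definition of the graded multiplication, $\overline{a}\,\overline{r}\,\overline{a}=ara+F_{2d(a)+k-1}$, and since $ara\in aBa=0$ we have $ara=0$, so this product vanishes. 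As every element of $Gr(B)$ is a finite sum of homogeneous pieces, this gives $\overline{a}\,Gr(B)\,\overline{a}=0$ with $\overline{a}\neq 0$, contradicting the semiprimeness of $Gr(B)$.

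An equivalent route, avoiding the element-wise characterization, is to pick a nonzero ideal $I\subseteq B$ with $I^2=0$ (obtained from any nonzero nilpotent ideal by taking a suitable power), equip it with the induced filtration $I\cap F_m$, and check that $Gr(I)=\bigoplus_m (I\cap F_m)/(I\cap F_{m-1})$ embeds as a nonzero graded two-sided ideal of $Gr(B)$ with $Gr(I)^2=0$; indeed, products of leading symbols of elements of $I$ land in $I^2=0$. Either way the substance is the same. I do not anticipate a genuine obstacle: the only points needing care are that a nonzero element of $B$ has a nonzero principal symbol (guaranteed by the positivity and exhaustiveness of the filtration) and that the product of two symbols is computed exactly modulo the next-lower filtration term, which is nothing but the definition of multiplication in $Gr(B)$. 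Positivity of the filtration is precisely what keeps the leading degrees from collapsing and ensures $\overline{a}\neq 0$.
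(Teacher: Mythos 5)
Your proof is correct, and your second (``equivalent'') route is exactly the paper's argument: a square-zero two-sided ideal $I$ yields a square-zero graded ideal $Gr(I)$ in $Gr(B)$, forcing $Gr(I)=0$ and hence, by exhaustiveness and positivity of the filtration, $I=0$. Your primary element-wise version via $aBa=0$ and principal symbols is just a rephrasing of the same leading-symbol computation, so the two approaches coincide in substance.
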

\begin{proof}
Let $I$ be a two-sided ideal of $B$ such that $I^2=0$. Then, $Gr(I)^2=0$ and hence $Gr(I)=0$. This
implies that $I=0$.
\end{proof}

\begin{theorem}[Goldie's theorem: bijective case]\label{2.3.8}
Let $R$ be a semiprime left Goldie ring and $A=\sigma(R)\langle x_1,\dots,x_n\rangle$ a bijective
skew \textit{PBW} extension of $R$. Then, $A$ is semiprime left Goldie, and hence, $Q_l(A)$ exists
and it is semisimple. The right side version of the theorem also holds.
\end{theorem}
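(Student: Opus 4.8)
The plan is to mirror the strategy already used in the bijective Ore's theorem (Theorem \ref{Orespbw}), combining the localization machinery of Lemma \ref{2.1.6} with the filtered-graded techniques of Section \ref{definitionexamplesspbw}. Set $S:=S_0(R)$, the regular elements of $R$. Since $R$ is semiprime left Goldie, the left regular elements of $R$ coincide with the regular ones (\cite{McConnell}, Proposition 2.3.4), and by Goldie's theorem $Q_l(R)=S^{-1}R$ exists and is semisimple Artinian. The first step is to check that $S$ satisfies the hypotheses of Lemma \ref{2.1.6}, namely $\sigma_i(S)=S$ for every $i$: each $\sigma_i$ is an automorphism of $R$ (by bijectivity), and an automorphism must carry regular elements to regular elements bijectively, so $\sigma_i(S_0(R))=S_0(R)$. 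Lemma \ref{2.1.6}(a) then produces $S^{-1}A=\sigma(S^{-1}R)\langle x_1',\dots,x_n'\rangle$, a bijective skew $PBW$ extension of the semisimple Artinian ring $Q_l(R)$.

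Next I would show that $S^{-1}A$ is semiprime left Goldie. Since $Q_l(R)$ is semisimple, it is in particular prime Noetherian when $R$ is prime, but in the semiprime case $Q_l(R)$ is only semisimple, so I would argue through the associated graded ring. By Theorem \ref{1.3.2}, $S^{-1}A$ is a positively filtered ring whose graded ring $Gr(S^{-1}A)$ is a quasi-commutative bijective skew $PBW$ extension of $Q_l(R)$, hence by Theorem \ref{1.3.3} it is isomorphic to an iterated skew polynomial ring $Q_l(R)[z_1;\theta_1]\cdots[z_n;\theta_n]$ of bijective automorphism type. Applying Corollary \ref{2.3.2} to the semisimple (hence semiprime left Goldie) ring $Q_l(R)$, this iterated extension is semiprime left Goldie; in particular $Gr(S^{-1}A)$ is semiprime. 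Proposition \ref{2.3.5} then lifts semiprimeness to $S^{-1}A$ itself. Moreover $S^{-1}A$ is a finitely generated module-filtered extension with Noetherian graded ring, so by the Hilbert Basis Theorem \ref{1.3.4} applied over $Q_l(R)$ it is left Noetherian, and therefore left Goldie. Thus $S^{-1}A$ is semiprime left Goldie, so $Q_l(S^{-1}A)$ exists and is semisimple.

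The final step is to descend from $S^{-1}A$ back to $A$ via Proposition \ref{2.3.4}. To invoke it I must verify its standing hypothesis that every left regular element of $A$ is regular. Here $A$ need not itself be semiprime Goldie a priori, so I would instead verify the hypothesis indirectly: $S\subseteq S_0(R)\subseteq S_0(A)$ (an element of $R$ regular in $R$ remains regular in $A$, which follows from the leading-coefficient argument already used in the proof of Lemma \ref{2.1.6}(a)), and one checks that a left regular element of $A$ maps to a left regular element of $S^{-1}A$, which is regular there by the conclusion above; Proposition \ref{2.3.4a} then transfers regularity back to $A$. With that hypothesis in hand, Proposition \ref{2.3.4}(ii) gives that $A$ is semiprime left Goldie if and only if $S^{-1}A$ is, and we have just shown the latter. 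Hence $A$ is semiprime left Goldie, $Q_l(A)\cong Q_l(S^{-1}A)$ exists by Proposition \ref{2.3.4}(i), and it is semisimple. The right-side statement follows by the entirely symmetric argument using Lemma \ref{2.1.6}(b), Corollary \ref{2.3.2}, and the right-hand versions of Propositions \ref{2.3.4a} and \ref{2.3.4}.

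I expect the main obstacle to be the verification that every left regular element of $A$ is regular, which is the precise hypothesis needed to apply the descent Proposition \ref{2.3.4}; establishing this cleanly requires carefully relating left-regularity in $A$ to left-regularity in the localization $S^{-1}A$ and then back down through Proposition \ref{2.3.4a}, rather than assuming a Goldie structure on $A$ that is in fact the conclusion we are trying to prove.
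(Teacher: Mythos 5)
Your proposal is correct and follows essentially the same route as the paper's proof: localize at $S:=S_0(R)$ via Lemma \ref{2.1.6}, get left Noetherian (hence Goldie) from Theorem \ref{1.3.4} over the semisimple ring $Q_l(R)$, get semiprimeness from the graded ring via Theorems \ref{1.3.2}, \ref{1.3.3} and Proposition \ref{2.3.5}, and descend to $A$ through Propositions \ref{2.3.4a} and \ref{2.3.4}. The step you flag as the main obstacle is resolved in the paper exactly as you suggest: since $S^{-1}A$ is semiprime left Goldie its left regular elements are regular, and Proposition \ref{2.3.4a} transfers this property down to $A$.
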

\begin{proof}
By Goldie's theorem, $Q_l(R)=S_0(R)^{-1}R$ exists and it is semisimple. Note that for every $1\leq
i\leq n$, $\sigma_i(S_0(R))=S_0(R)$. By Lemma \ref{2.1.6}, $S_0(R)^{-1}A$ exists and it is a
bijective extension of $Q_l(R)$, i.e., $S_0(R)^{-1}A=\sigma(Q_l(R))\langle x_1',\dots,
x_n'\rangle$. Since $Q_l(R)$ is left Noetherian, then by Theorem \ref{1.3.4}, $S_0(R)^{-1}A$ is
left Noetherian, i.e, left Goldie. By Theorem \ref{1.3.2},
$Gr(S_0(R)^{-1}A)=Gr(\sigma(Q_l(R))\langle x_1',\dots, x_n'\rangle)$ is a quasi-commutative (and
bijective) extension of the semiprime left Goldie ring $Q_l(R)$, so by Theorem \ref{2.3.3},
$Gr(S_0(R)^{-1}A)$ is semiprime (left Goldie). Proposition \ref{2.3.5} says that $S_0(R)^{-1}A$ is
semiprime. In order to apply Proposition \ref{2.3.4} and conclude the proof only rest to observe
that $S_0(R)\subseteq S_0(A)$ and the left regular elements of $A$ coincide with $S_0(A)$. The last
statement can be justify in the following way: since $S_0(R)^{-1}A$ is semiprime left Goldie, then
the left regular elements of $S_0(R)^{-1}A$ coincide with its regular elements, so by Proposition
\ref{2.3.4a} the same is true for $A$.
\end{proof}

\section{The quantum version of the Gelfand-Kirillov conjecture for skew quantum polynomials}

As application of the results of the previous sections, we can prove a quantum version of the
Gelfand-Kirillov conjecture for the ring of skew quantum polynomials. This class of rings were
defined in \cite{lezamareyes1}, and represent a generalization of Artamonov's quantum polynomials
(see \cite{Artamonov},  \cite{Artamonov2}). They can be defined as a quasi-commutative bijective
skew $PBW$ extension of the $r$-multiparameter quantum torus, or also, as a localization of a
quasi-commutative bijective skew $PBW$ extension. We recall next its definition.

Let $R$ be a ring with a fixed matrix of parameters $\textbf{q}:=[q_{ij}]\in M_n(R)$, $n\geq 2$,
such that $q_{ii}=1=q_{ij}q_{ji}=q_{ji}q_{ij}$ for every $1\leq i,j\leq n$, and suppose also that
it is given a system $\sigma_1,\dots,\sigma_n$ of automorphisms of $R$. The ring of \textit{skew
quantum polynomials over $R$}, denoted by $R_{\textbf{q},\sigma}[x_1^{\pm 1 },\dots,x_r^{\pm 1},
x_{r+1},\dots,x_n]$, is defined as follows:
\begin{enumerate}
\item[\rm{(i)}]$R\subseteq R_{\textbf{q},\sigma}[x_1^{\pm 1},\dots,x_r^{\pm 1},
x_{r+1},\dots,x_n]$;
\item[\rm{(ii)}]$R_{\textbf{q},\sigma}[x_1^{\pm 1
},\dots,x_r^{\pm 1}, x_{r+1},\dots,x_n]$ is a free left $R$-module with basis
\begin{equation}\label{equ1.4.2}
\{x_1^{\alpha_1}\cdots x_n^{\alpha_n}|\alpha_i\in \mathbb{Z} \ \text{for}\ 1\leq i\leq r \
\text{and} \ \alpha_i\in \mathbb{N}\ \text{for}\ r+1\leq i\leq n\};
\end{equation}
\item[\rm{(iii)}] the variables $x_1,\dots,x_n$ satisfy the defining relations
\begin{center}
$x_ix_i^{-1}=1=x_i^{-1}x_i$, $1\leq i\leq r$,

$x_jx_i=q_{ij}x_ix_j$, $x_ir=\sigma_i(r)x_i$, $r\in R$, $1\leq i,j\leq n$.
\end{center}
\end{enumerate}
When all automorphisms are trivial, we write $R_{\textbf{q}}[x_1^{\pm 1 },\dots,x_r^{\pm 1},
x_{r+1},\dots,x_n]$, and this ring is called the ring of \textit{quantum polynomials over $R$}. If
$R=\Bbbk$ is a field, then $\Bbbk_{\textbf{q},\sigma}[x_1^{\pm 1 },\dots,x_r^{\pm 1},
x_{r+1},\dots,x_n]$ is the \textit{algebra of skew quantum polynomials}. For trivial automorphisms
we get the \textit{algebra of quantum polynomials} simply denoted by $\mathcal{O}_\textbf{q}$ (see
\cite{Artamonov}). When $r=0$, $R_{\textbf{q},\sigma}[x_1^{\pm 1},\dots,x_r^{\pm 1},
x_{r+1},\dots,x_n]=R_{\textbf{q},\sigma}[x_1,\dots,x_n]$ is the
\textit{$n$-mul\-ti\-pa\-ra\-me\-tric skew quantum space over $R$}, and when $r=n$, it coincides
with $R_{\textbf{q},\sigma}[x_1^{\pm 1},\dots,x_n^{\pm 1}]$, i.e., with the
\textit{$n$-multiparametric skew quantum torus over $R$}.

Note that $R_{\textbf{q},\sigma}[x_1^{\pm 1},\dots,x_r^{\pm 1}, x_{r+1},\dots,x_n]$ can be viewed
as a localization of the $n$-mul\-ti\-pa\-ra\-me\-tric skew quantum space, which, in turn, is an
skew $PBW$ extension. In fact, we have the quasi-commutative bijective skew $PBW$ extension
\begin{equation}\label{equ2.2.3}
A:=\sigma(R)\langle x_1,\dots, x_n\rangle, \, \text{with} \, x_ir=\sigma_i(r)x_i \, \text{and}\,
x_jx_i=q_{ij}x_ix_j, 1\leq i,j\leq n;
\end{equation}
observe that $A=R_{\textbf{q},\sigma}[x_1,\dots,x_n]$. If we set
\begin{center}
$S:=\{rx^{\alpha}\mid r\in R^*, x^{\alpha}\in {\rm Mon}\{x_1,\dots,x_r\}\}$,
\end{center}
then $S$ is a multiplicative subset of $A$ and
\begin{equation}\label{equ2.7.4}
S^{-1}A\cong R_{\textbf{q},\sigma}[x_1^{\pm 1},\dots,x_r^{\pm 1}, x_{r+1},\dots,x_n]\cong AS^{-1}.
\end{equation}

Before presenting our next result, let us first recall the classical Gelfand-Kirillov conjecture
and some well known cases, classical and quantum, where the conjecture have positive answer. We
start with the classical formulation.

\begin{enumerate}
\item[\rm (i)](Gelfand-Kirillov conjecture, \cite{GK}) \textit{Let $\mathcal{G}$ be an algebraic Lie algebra of finite dimension over a field $L$, with ${\rm
char}(L)=0$. Then, there exist integers $n,k\geq 1$ such that}
\begin{equation}\label{GKconjecture}
Q(\mathcal{U}(\mathcal{G}))\cong Q(A_n(L[s_1,\dots,s_k])),
\end{equation}
\textit{$\mathcal{U}(\mathcal{G})$ is the enveloping algebra of the Lie algebra $\mathcal{G}$ and
$A_n(L[s_1,\dots,s_k])$ is the Weyl algebra over the polynomial ring $L[s_1,\dots,s_k]$}.
\item[\rm (ii)] (\cite{GK}, Lemma 7) Let $\mathcal{G}$ be the algebra of all $n\times n$ matrices over a field $L$, i.e.,
$\mathcal{G}=M_n(L)$, with ${\rm char}(L)=0$. Then, $\mathcal{G}$ is algebraic and
$($\ref{GKconjecture}$)$ holds. The same is true if $\mathcal{G}$ is the algebra of matrices of
null trace.
\item[\rm (iii)] (\cite{GK}, Lemma
8) Let $\mathcal{G}$ be a finite dimensional nilpotent Lie algebra over a field $L$, with ${\rm
char}(L)=0$. Then, $\mathcal{G}$ is algebraic and $($\ref{GKconjecture}$)$ holds.
\item[\rm (iv)] (\cite{Joseph}, Theorem 3.2) Let $\mathcal{G}$ be a finite dimensional solvable algebraic Lie algebra over the field
$\mathbb{C}$ of complex numbers. Then, $\mathcal{G}$ satisfies the conjecture
$($\ref{GKconjecture}$)$.
\end{enumerate}

Now we review some well known results about the analog quantum version of the Gelfand-Kirillov
conjecture, where the Weyl algebra $A_n(L[s_1,\dots,s_k])$ in (\ref{GKconjecture}) is replaced by a
suitable $n$-multiparametric quantum space. $Z(B)$ will represent the center of the ring $B$.

\begin{enumerate}
\item[\rm (vi)](\cite{Alev2}, Theorem 2.15.) Let $U_q^{+}(sl_m)$ be the quantum enveloping algebra of the Lie algebra of strictly superior
triangular matrices of size $m\times m$ over a field $L$.
\begin{enumerate}
\item[\rm (a)]If $m=2n+1$, then
\begin{center}
$Q(U_q^{+}(sl_m))\cong Q({\rm K}_{{\rm q}}[x_1,\dots, x_{2n^2}])$,
\end{center}
where ${\rm K}:=Q(Z(U_q^{+}(sl_m)))$ and ${\rm q}:=[q_{ij}]\in M_{2n^2}(L)$, with
$q_{ii}=1=q_{ij}q_{ji}$ for every $1\leq i,j\leq 2n^2$.
\item[\rm (b)]If $m=2n$, then
\begin{center}
$Q(U_q^{+}(sl_m))\cong Q({\rm K}_{{\rm q}}[x_1,\dots, x_{2n(n-1)}])$,
\end{center}
where ${\rm K}:=Q(Z(U_q^{+}(sl_m)))$ and ${\rm q}:=[q_{ij}]\in M_{2n(n-1)}(L)$, with
$q_{ii}=1=q_{ij}q_{ji}$ for every $1\leq i,j\leq 2n(n-1)$.
\end{enumerate}
\item[\rm (vii)](\cite{Panov}, Main Theorem) Let $B$ be a pure $\textbf{\rm q}$-solvable $C$-algebra. Then, $Q(B)\cong Q(Gr(B))$ and $Gr(B)\cong
C_{\textbf{\rm q}}[x_1,\dots,x_n]$, where $C$ is a Noetherian commutative domain.
\item[\rm (viii)](\cite{Cauchon}) Let $L$ be a field and $B:=L[x_1][x_2;\sigma_2,\delta_2]\cdots[x_n;\sigma_n,\delta_n]$ an iterated
skew polynomial ring with some extra adequate conditions on $\sigma$'s and $\delta$'s. Then, there
exits $\textbf{\rm q}:=[q_{i,j}]\in M_n(L)$ with $q_{ii}=1=q_{ij}q_{ji}$, for every $1\leq i,j\leq
n$, such that $Q(B)\cong Q(L_{\rm q}[x_1,\dots,x_n])$.
\end{enumerate}

With the previous antecedents, our next result can be better understood.

\begin{corollary}[Gelfand-Kirillov conjecture for skew quantum polynomials]
Let $R$ be a left $($right$)$ Ore domain. Then,
\begin{center}
$Q(R_{\textbf{q},\sigma}[x_1^{\pm 1},\dots,x_r^{\pm 1}, x_{r+1},\dots,x_n])\cong
Q(\textbf{Q}_{\textbf{q},\sigma}[x_1,\dots,x_n])$,
\end{center}
where $\textbf{Q}:=Q(R)$.
\end{corollary}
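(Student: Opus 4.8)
The plan is to realize the skew quantum polynomial ring as a localization of a quasi-commutative bijective skew $PBW$ extension and then to show that passing to this localization leaves the total division ring of fractions unchanged. Concretely, I would set $A := R_{\textbf{q},\sigma}[x_1,\dots,x_n] = \sigma(R)\langle x_1,\dots,x_n\rangle$, the $n$-multiparametric skew quantum space of (\ref{equ2.2.3}); by (\ref{equ2.7.4}) the skew quantum polynomial ring $B := R_{\textbf{q},\sigma}[x_1^{\pm 1},\dots,x_r^{\pm 1},x_{r+1},\dots,x_n]$ is isomorphic to $S^{-1}A$, where $S=\{rx^{\alpha}\mid r\in R^{*},\, x^{\alpha}\in \mathrm{Mon}\{x_1,\dots,x_r\}\}$ is the multiplicative subset of $A$ introduced just before (\ref{equ2.7.4}).

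First I would record that $A$ is a domain: since $R$ is a left Ore domain, in particular a domain, Proposition \ref{1.1.10a} gives that $A$ is a domain. Because $A$ is a quasi-commutative (bijective) skew $PBW$ extension of the left Ore domain $R$, Theorem \ref{2.1.9} shows that $A$ is itself a left Ore domain, so $Q_l(A)$ exists. To identify this division ring with the target, I would apply Corollary \ref{2.1.5} (equivalently Lemma \ref{2.1.6}) with the multiplicative set $R-\{0\}=S_0(R)$: the localization $S_0(R)^{-1}A = \sigma(Q_l(R))\langle x_1',\dots,x_n'\rangle$ is exactly the quantum space over $\textbf{Q}=Q_l(R)$, i.e. $\textbf{Q}_{\textbf{q},\sigma}[x_1,\dots,x_n]$, with constants $c_{i,j}'=q_{ij}/1$ and $c_{i,r/s}'=\overline{\sigma_i}(r/s)$. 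Hence the explicit isomorphism furnished by the bijective Ore theorem (Theorem \ref{Orespbw}) reads $Q_l(A)\cong Q_l(\textbf{Q}_{\textbf{q},\sigma}[x_1,\dots,x_n])$.

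Finally, to remove the inverted variables I would invoke Proposition \ref{2.2.3}: $A$ is a domain, $S$ is a multiplicative subset of $A$ for which $S^{-1}A$ exists, and $A$ is left Ore, so $S^{-1}A$ is again a left Ore domain and $Q_l(S^{-1}A)\cong Q_l(A)$. Stringing the isomorphisms together gives
\[
Q(B)=Q_l(S^{-1}A)\cong Q_l(A)\cong Q_l(\textbf{Q}_{\textbf{q},\sigma}[x_1,\dots,x_n])=Q(\textbf{Q}_{\textbf{q},\sigma}[x_1,\dots,x_n]),
\]
which is the asserted isomorphism. The right Ore case is entirely parallel, replacing $S^{-1}A$ by $AS^{-1}$ and using the right-hand versions of Theorem \ref{Orespbw} and Proposition \ref{2.2.3}. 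There is no genuine obstacle here, since all the real content has been front-loaded into the preparatory lemmas; the only point requiring attention is that the hypotheses of Proposition \ref{2.2.3} are met, namely that $S$ consists of nonzero (hence regular) elements of the \emph{ring} $A$ rather than of $R$, which is immediate because $A$ is a domain.
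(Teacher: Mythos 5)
Your proposal is correct and follows essentially the same route as the paper: both realize $R_{\textbf{q},\sigma}[x_1^{\pm 1},\dots,x_r^{\pm 1},x_{r+1},\dots,x_n]$ as the localization $S^{-1}A$ of the quantum space $A$ of (\ref{equ2.2.3}), use Proposition \ref{2.2.3} to conclude $Q(S^{-1}A)\cong Q(A)$, and then identify $Q(A)$ with $Q(\textbf{Q}_{\textbf{q},\sigma}[x_1,\dots,x_n])$ via the Ore theorem for skew $PBW$ extensions (Theorem \ref{2.1.9} or Theorem \ref{Orespbw}). Your explicit check that $S$ consists of regular elements of the domain $A$, so that Proposition \ref{2.2.3} applies, is a point the paper leaves implicit.
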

\begin{proof}
In order to simplify the notation we write
$Q^{r,n}_{\textbf{q},\sigma}(R):=R_{\textbf{q},\sigma}[x_1^{\pm 1},\dots,x_r^{\pm 1},
x_{r+1},\dots,x_n]$. If $R$ is a domain, then $Q^{r,n}_{\textbf{q},\sigma}(R)$ is also a domain
(see Proposition \ref{1.1.10a}, (\ref{equ2.7.4}), and Proposition \ref{2.2.3}). Thus, from
Proposition \ref{2.2.3} and Theorem \ref{2.1.9} (or also using Theorem \ref{Orespbw}), if $R$ is a
left (right) Ore domain, then $Q^{r,n}_{\textbf{q},\sigma}(R)$ is a left (right) Ore domain, and
hence $Q^{r,n}_{\textbf{q},\sigma}(R)$ has left (right) total division ring of fractions,
$Q(Q^{r,n}_{\textbf{q},\sigma}(R))\cong Q(A)$, with $A$ as in (\ref{equ2.2.3}). Therefore, with the
notation of the previous sections, we have
\begin{center}
$Q(Q^{r,n}_{\textbf{q},\sigma}(R))\cong Q(A)\cong Q(\sigma(Q(R))\langle
x_1',\dots,x_n'\rangle)\cong Q(\textbf{Q}_{\textbf{q},\sigma}[x_1,\dots,x_n])$,
\end{center}
where $\textbf{Q}:=Q(R)$ and we identify $x_i'=\frac{x_i}{1}:=x_i$ and
$\overline{\sigma_i}:=\sigma_i$, $1\leq i\leq n$. Thus, we have proved that the left (right) total
rings of fractions of $Q^{r,n}_{\textbf{q},\sigma}(R)$ is the left (right) total ring of fractions
of the $n$-multiparametric skew quantum space over $Q(R)$.
\end{proof}

As another application of the results of the previous sections, we conclude the paper with the
Goldie's theorem for the skew quantum polynomials.

\begin{corollary}
Let $R$ be a semiprime left $($right$)$ Goldie ring, then $R_{\textbf{q},\sigma}[x_1^{\pm 1
},\dots,x_r^{\pm 1}, x_{r+1},\dots,x_n]$ is also a semiprime left $($right$)$ Goldie ring.
\end{corollary}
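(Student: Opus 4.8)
The plan is to realize $R_{\textbf{q},\sigma}[x_1^{\pm 1},\dots,x_r^{\pm 1}, x_{r+1},\dots,x_n]$ as a localization of the $n$-multiparametric skew quantum space and then transport the semiprime left (right) Goldie property across that localization. I would set $A:=R_{\textbf{q},\sigma}[x_1,\dots,x_n]=\sigma(R)\langle x_1,\dots,x_n\rangle$ as in (\ref{equ2.2.3}); this is a quasi-commutative bijective skew $PBW$ extension of $R$ (the parameters $q_{ij}$ are invertible since $q_{ij}q_{ji}=1$, and the $\sigma_i$ are automorphisms). Since $R$ is semiprime left (right) Goldie and every $\sigma_i$ is bijective, Theorem \ref{2.3.3} immediately gives that $A$ is semiprime left (right) Goldie. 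By (\ref{equ2.7.4}) the ring in question is precisely $S^{-1}A$ (and equally $AS^{-1}$) for the multiplicative set $S=\{rx^{\alpha}\mid r\in R^*,\ x^{\alpha}\in{\rm Mon}\{x_1,\dots,x_r\}\}$, so it suffices to show that localizing $A$ at $S$ preserves the semiprime Goldie property.

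To invoke Proposition \ref{2.3.4} with $B:=A$ I must verify its two standing hypotheses. First, in a semiprime left (right) Goldie ring the left (right) regular elements coincide with the regular ones; since $A$ is semiprime left (right) Goldie by the previous paragraph, this holds for $A$. Second, I need $S\subseteq S_0(A)$. Here the generators of $S$ are products of units $r\in R^*$, which are automatically regular in $A$, and of the variables $x_1,\dots,x_r$. Using the iterated skew polynomial presentation $A\cong R[z_1;\theta_1]\cdots[z_n;\theta_n]$ of automorphism type furnished by Theorem \ref{1.3.3}, each indeterminate is regular in the full extension: in an Ore extension of automorphism type the indeterminate is a non-zero-divisor, and passing to a further Ore extension of automorphism type preserves regularity of an element (since each $\theta_i$ carries regular elements to regular elements). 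As a product of regular elements is regular, every element of $S$ is regular, i.e. $S\subseteq S_0(A)$. The existence of $S^{-1}A$ and of $AS^{-1}$ is exactly the content of (\ref{equ2.7.4}).

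With both hypotheses in place, Proposition \ref{2.3.4}(ii) shows that $S^{-1}A$ is semiprime left Goldie if and only if $A$ is, and its right-hand analogue shows the same for $AS^{-1}$ on the right; combined with the fact that $A$ is semiprime left (right) Goldie and the identifications in (\ref{equ2.7.4}), this yields the claim. Essentially all the mathematical content lives in the cited Theorem \ref{2.3.3} and Proposition \ref{2.3.4}; for this corollary the only point that genuinely requires care is the bookkeeping that presents the ring as $S^{-1}A$ with $S$ consisting of regular elements, so the modest obstacle is the verification that the inverted variables $x_1,\dots,x_r$ are regular in $A$, which the automorphism-type iterated presentation settles.
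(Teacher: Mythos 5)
Your proposal is correct and follows essentially the same route as the paper: apply Theorem \ref{2.3.3} to get that $A=R_{\textbf{q},\sigma}[x_1,\dots,x_n]$ is semiprime left (right) Goldie, check that $S$ consists of regular elements so that Proposition \ref{2.3.4} applies, and transport the property across the localization (\ref{equ2.7.4}). The only cosmetic difference is in verifying $S\subseteq S_0(A)$: the paper argues directly on leading terms using the invertibility of $r$ and of the constants $c_{\alpha,\beta}$, while you route the same check through the iterated skew polynomial presentation of Theorem \ref{1.3.3}; both are valid.
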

\begin{proof}
From Theorem \ref{2.3.3} (we can use also Theorem \ref{2.3.8}) we get that $A$ in (\ref{equ2.2.3})
is a semiprime left (right) Goldie ring. In addition, note that the set $S$ in (\ref{equ2.7.4})
satisfies the hypothesis of Proposition \ref{2.3.4}: in fact, since $A$ is semiprime left (right)
Goldie, any left (right) regular element is regular; $S\subseteq S_0(A)$ since if $rx^{\alpha}\in
S$ and $p=c_1x^{\beta_1}+\cdots+c_tx^{\beta_t}\in A$ are such that $rx^{\alpha}p=0$ or
$prx^{\alpha}=0$, then $p=0$ since $r$ and the constants $c_{\alpha,\beta}$ are invertible (Remark
\ref{identities}). Proposition \ref{2.3.4} says that $R_{\textbf{q},\sigma}[x_1^{\pm 1
},\dots,x_r^{\pm 1}, x_{r+1},\dots,x_n]$ is semiprime left (right) Goldie.
\end{proof}




\begin{thebibliography}{200}

\bibitem{Alev2}\textbf{Alev, J. and Dumas, F.}, \textit{Sur le corps des fractions de certaines algèbres quantiques}, J. of Algebra, 170, (1994),
229-265.

\bibitem{Artamonov}\textbf{Artamonov, V.},  \textit{Quantum polynomials},
WSPC Proceedings, 2008.

\bibitem{Artamonov2}\textbf{Artamonov, V.},  \textit{Serre's quantum problem},
Russian Math. Surveys, 53(4), 1998, 657-730.

\bibitem{Bell}\textbf{Bell, A. and Goodearl, K.}, \textit{Uniform rank over differential operator rings and
Poincaré-Birkhoff-Witt extensions}, Pacific Journal of Mathematics, 131(1), 1988, 13-37.

\bibitem{Gomez-Torrecillas}\textbf{Bueso, J., Gómez-Torrecillas, J. and Verschoren, A.}, \textit{Algorithmic Methods in
noncommutative Algebra: Applications to Quantum Groups}, Kluwer, 2003.

\bibitem{Cauchon}\textbf{Cauchon, G}, \textit{Effacement des dérivations et spectres premiers des algèbres quantiques},
J. of Algebra, 2003, 476-518.

\bibitem{Cohn1}\textbf{Cohn, P.}, \textit{Free Ideal Rings and Localizations in General Rings}, Cambridge University Press,
2006.

\bibitem{GK}\textbf{Gelfand, I. and Kirillov, A.}, \textit{Sur le corps liés aux alg\`{e}bres enveloppantes des
alg\`{e}bres de Lie}, Math. IHES, 31, 1966, 509-523.

\bibitem{Goodearl}\textbf{Goodearl, K. and Warfield, R. Jr.}, \textit{An Introduction to Non-commutative Noetherian Rings}, London Mathematical
Society, ST 61, 2004.

\bibitem{Joseph}\textbf{Joseph, A.}, \textit{Proof of the Gelfand-Kirillov conjecture for solvable Lie algebras},
Pro. Amer. Math. Soc., 45, (1974), 1-10.

\bibitem{LeroyMatczuk2005}\textbf{Leroy, A. and J. Matczuk, J.}, \textit{Goldie conditions for Ore extensions over semiprime rings}, Algebras and
Representation Theory, 8, 2005, 679-688.

\bibitem{LezamaGallego}\textbf{Lezama, O. and Gallego, C.}, {\em Gr\"obner bases for ideals of sigma-PBW extensions}, Communications in Algebra,
39 (1), 2011, 50-75.

\bibitem{lezamareyes1}\textbf{Lezama, O. \& Reyes, M.}, {\em Some homological properties of skew $PBW$
extensions}, to appear in Communications in Algebra.

\bibitem{McConnell}\textbf{McConnell, J. and Robson, J.}, \textit{Non-commutative Noetherian Rings},
Graduate Studies in Mathematics, AMS, 2001.

\bibitem{Ore}\textbf{Ore, O.}, \textit{Theory of non-commutative polynomials}. Ann. Math. 34, 1933,
480-508.

\bibitem{Panov}\textbf{Panov, A.}, \textit{Field of fractions of quantum solvable algebras}, J. Algebra, 236, 2001,
110-121.

\end{thebibliography}
\end{document}